    \newtheorem{definition}{Definition}[section]
    \newtheorem{theorem}[definition]{Theorem}
    \newtheorem{proposition}[definition]{Proposition}
    \newtheorem{lemma}[definition]{Lemma}
    \newtheorem{corollary}[definition]{Corollary}
    \newtheorem{example}[definition]{Example}
    \newtheorem{remark}[definition]{Remark}
\begin{document}

\title{Estimating Reeb Chords using Microlocal Sheaf Theory}
\date{}
\author{Wenyuan Li}
\address{Department of Mathematics, Northwestern University.}
\email{wenyuanli2023@u.northwestern.edu}
\maketitle

\begin{abstract}
    We show that for a closed Legendrian submanifold in a 1-jet bundle, if there is a sheaf with compact support, perfect stalk and singular support on that Legendrian, then (1)~the number of Reeb chords has a lower bound by half of the sum of Betti numbers of the Legendrian; (2)~the number of Reeb chords between the original Legendrian and its Hamiltonian pushoff has a lower bound in terms of Betti numbers when the oscillation norm of the Hamiltonian is small comparing with the length of Reeb chords. In the proof we develop a duality exact triangle and use the persistence structure (which comes from the action filtration) of microlocal sheaves.
\end{abstract}


\section{Introduction}

\subsection{Motivation and Background}
    A contact manifold $(Y, \xi)$ is a $(2n+1)$-manifold $Y$ together with a maximal nonintegrable hyperplane distribution $\xi$. Assume that there exists a 1-form $\alpha \in \Omega^1(Y)$ called a contact form such that $\xi = \ker\alpha$ (this is equivalent to saying that $\xi$ is coorientable). We define the Reeb vector field $R_\alpha$ to be the vector field satisfying
    $$\iota(R_\alpha)\alpha = 1, \,\,\, \iota(R_\alpha)d\alpha = 0.$$
    In a contact manifold $(Y, \ker\alpha)$, we consider Legendrian submanifolds $\Lambda \subset Y$ that are $n$-manifolds such that $T\Lambda \subset \xi|_\Lambda$. Reeb chords on $\Lambda$ are Reeb trajectories that both start and end on $\Lambda$.

    Estimating the number of Reeb chords has been a basic question on Legendrian submanifolds since Arnold's time \cite{Arnold}. When the contact manifold is $(Y, \xi) = (P \times \mathbb{R}_t, \ker(dt - \theta_P))$ where $(P, d\theta_P)$ is an exact symplectic manifold, one can pick the contact form $\alpha = dt - \theta_P$, and then the Reeb vector field is $\partial /\partial t$. For $\Lambda$ a closed Legendrian, consider the Lagrangian projection
    $$\pi_\text{Lag}: \Lambda \hookrightarrow P \times \mathbb{R} \rightarrow P.$$
    The Reeb chords between Legendrian submanifolds correspond bijectively to intersection points of their Lagrangian projections.

    For the number of self Reeb chords, when $n$ is even, there is a topological lower bound coming from $[\pi_\text{Lag}(\Lambda)] \cdot [\pi_\text{Lag}(\Lambda)] = \chi(\Lambda)/2$. Some flexibility results tell us that this is sometimes the best bound one can expect \cite{Fewdoublepoints}. However, under some extra assumptions, there are rigid behaviours beyond this purely algebraic topological bound.

    Using pseudo-holomorphic curves, a number of celebrated theorems on the number of self Reeb chords have been found \cite{Mohnkechord,Cieliechord,Rittertqft}. In particular, for Legendrians $\Lambda \subset P \times \mathbb{R}$, using Legendrian contact homology, works by Ekholm-Etnyre-Sullivan, Ekholm-Etnyre-Sabloff and Dimitroglou Rizell-Golovko \cite{EESorientation,EESduality,RizGolestimating} showed that, under some assumptions, the number of self Reeb chords is bounded from below by half of the sum of Betti numbers.

    Other than estimating self Reeb chords, estimating the number of Reeb chords between $\Lambda$ and some Hamiltonian pushoff $\varphi_H^1(\Lambda)$ has also been an important question. When the contact Hamiltonian comes from a symplectic Hamiltonian on $P$, this question reduces to the Arnold conjecture for (immersed) Lagrangian submanifolds $\pi_\text{Lag}(\Lambda)$ \cite{Arnold}.

    Many Legendrians can be displaced from themselves so that there are no Reeb chords between $\Lambda$ and $\varphi_H^1(\Lambda)$. However, when the norm of the Hamiltonian is sufficiently small, one can get estimates on the number of Reeb chords between $\Lambda$ and $\varphi_H^1(\Lambda)$ using pseudo-holomorphic curves \cite{Akaho,Chekanov,LiuCuplength,RizSenergy}. In particular, a recent result by Dimitroglou Rizell-Sullivan \cite{RizSpersist}, using the persistence of Legendrian contact homology, showed that for Legendrians $\Lambda \subset P \times \mathbb{R}$ satisfying certain assumptions, there is a lower bound of the number of Reeb chords in terms of Betti numbers, when the oscillation norm of the Hamiltonian is small comparing to the length of Reeb chords.

    On the other hand, in recent years microlocal sheaf theory has also shown to be a powerful tool in symplectic and contact geometry \cite{NadZas,Nad,Tamarkin1,Shendeconormal,STZ,STWZ,Gui,GuiGEthm,Guithree,Chiu,Arborealloose,CasalsZas}. In symplectic geometry, microlocal sheaf theory has already been used to show estimations on number of intersection points of Lagrangians (in particular, to solve non-displaceability problems) \cite{Tamarkin1,GKS,GS,Ike,AsanoIkeimmersion}.

    In contact geometry, conjecturally microlocal sheaves should be equivalent to certain representations of the Chekanov-Eliashberg dg algebra defined by pseudo-holomorphic curves, and for $\mathbb{R}^3_\text{std}$ it is known that a category of augmentations of the Chekanov-Eliashberg dg algebra is indeed a microlocal sheaf category consisting of microlocal rank 1 (i.e.~simple) objects \cite{AugSheaf} (in higher dimensions, some results can also be obtained \cite{CasalsMurphydga,AugSheafknot,AugSheafsurface}). Therefore, one may expect that we can use sheaf theory to study the number of Reeb chords.

    However, even though augmentations are sheaves, the isomorphisms are typically not explicit, and therefore it is nontrivial to identify homomorphisms of sheaves with Reeb chords geometrically. The main purpose of this paper is to give some understanding on the correspondence and estimate the number of Reeb chords using microlocal sheaf theory.

\subsection{Results and Methods}
    We will show the following theorems on Reeb chord estimations, using microlocal sheaf theory. In order to apply microlocal sheaf theory, we consider only contact manifolds $J^1(M) = T^*M \times \mathbb{R}$ where $\dim M = n$, which are contactomorphic to
    $$T^{*,\infty}_{\tau > 0}(M \times \mathbb{R}) = \{(x, t, \xi, \tau) \mid |\xi|^2 + |\tau|^2 = 1, \tau > 0\}.$$
    The contact form we choose will be $\alpha = dt - (\xi/\tau)dx$, and thus the Reeb vector field is $R_\alpha = \partial/\partial t$. Recall that the support of a complex of sheaves in $M \times \mathbb{R}$ is
    $$\mathrm{supp}(\mathscr{F}) = \overline{\bigcup_{j \in \mathbb{Z}}\{x \in M \times \mathbb{R} \mid (H^j\mathscr{F})_x \neq 0\}}.$$

\begin{remark}
    Throughout the paper, $Sh^b_\Lambda(M \times \mathbb{R})$ will represent the dg category of sheaves on $M \times \mathbb{R}$ with singular support in $\Lambda$ over $\Bbbk$ with perfect stalks, localized along acyclic objects (this is different from the notations in some literature \cite{GS,Guisurvey,Ike,AsanoIke}).
\end{remark}

    For self Reeb chords of a Legendrian $\Lambda \subset T^{*,\infty}_{\tau > 0}(M \times \mathbb{R})$, we have the following results analogous to Ekholm-Etnyre-Sullivan \cite{EESorientation}, Ekholm-Etnyre-Sabloff \cite{EESduality} and Dimitroglou Rizell-Golovko \cite{RizGolestimating}, where they showed the same inequality under the existence of a finite dimensional representation of the Chekanov-Eliashberg dg algebra, or Sabloff-Traynor \cite{Genfamily}, where they used generating families.

    A Legendrian submanifold $\Lambda \subset T^{*,\infty}_{\tau > 0}(M \times \mathbb{R})$ is called \emph{chord generic}, if the Lagrangian projection $\pi_\text{Lag}(\Lambda)$ is immersed with only transverse double points. Let $\mathcal{Q}(\Lambda)$ be the set of Reeb chords on $\Lambda$. Assume that the Maslov class $\mu(\Lambda) = 0$. Then there is a grading on Reeb chords of $\Lambda$ (where the degree is given by the Conley-Zehnder index; see Section \ref{grade}). Let $\mathcal{Q}_i(\Lambda)$ be the set of degree $i$ Reeb chords on $\Lambda$.

\begin{theorem}\label{puresheaf}
    Let $M$ be orientable, $\Lambda \subset T^{*,\infty}_{\tau > 0}(M \times \mathbb{R})$ be a closed chord generic Legendrian submanifold and $\Bbbk$ be a field (and $\Lambda$ is spin when $\Bbbk\neq \mathbb{Z}/2\mathbb{Z}$). If there exists a $\Bbbk$-coefficient microlocal rank $r$ sheaf $\mathscr{F} \in Sh^b_\Lambda(M \times \mathbb{R})$ with perfect stalks such that $\mathrm{supp}(\mathscr{F})$ is compact, then
    $$|\mathcal{Q}_i(\Lambda)| + |\mathcal{Q}_{n-i}(\Lambda)| \geq b_i(\Lambda; \Bbbk).$$
    In particular, the number of Reeb chords
    $$|\mathcal{Q}(\Lambda)| \geq \frac{1}{2}\sum_{i=0}^nb_i(\Lambda; \Bbbk).$$
    Here $b_i(\Lambda; \Bbbk) = \dim_\Bbbk H^i(\Lambda; \Bbbk)$.
\end{theorem}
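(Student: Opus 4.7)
The plan is to build a persistence module from sheaf Hom complexes under Reeb translation, establish a Sabloff-type duality exact triangle between its positive and negative ends, and extract the Reeb chord bound from a bar-counting argument. Write $T_a\colon M \times \mathbb{R} \to M \times \mathbb{R}$ for the translation $t \mapsto t + a$ and set $C(a) := R\mathrm{Hom}(\mathscr{F}, T_{a*}\mathscr{F})$ for $a \in \mathbb{R}_{\geq 0}$. Since the singular support of $\mathscr{F}$ lies in $\{\tau > 0\}$, the family $\{T_{a*}\mathscr{F}\}_{a \geq 0}$ admits natural comparison maps making $a \mapsto C(a)$ into a persistence module; this is the sheaf-theoretic incarnation of the action filtration used in Legendrian contact homology.

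First I would identify the two endpoints. At $a = 0$, purity with microlocal rank $r$ and compactness of $\mathrm{supp}(\mathscr{F})$ give $C(0) \cong R\Gamma(\Lambda; \Bbbk)^{\oplus r^2}$ up to a uniform twist by the microlocal gerbe (trivialized using orientability together with the spin hypothesis). For $a$ larger than the maximal Reeb chord length, compact support forces $\mathrm{supp}(\mathscr{F})$ and $\mathrm{supp}(T_{a*}\mathscr{F})$ to be disjoint, so $C(a) = 0$. Between these extremes the cone of $C(a-\varepsilon) \to C(a+\varepsilon)$ is nonzero precisely when $a$ is a Reeb chord length, and the chord-generic assumption plus a local model for a pure sheaf near a transverse Lagrangian double point identify the local contribution at a chord $\gamma$ with a rank-$r^2$ summand placed in the Conley--Zehnder degree $|\gamma|$ from Section~\ref{grade}. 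Thus the total cohomological rank of the cone $C(0) \to C(\infty)$ in degree $i$ is at most $r^2\, |\mathcal{Q}_i(\Lambda)|$.

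Next I would develop the duality exact triangle. Verdier duality together with purity should identify $\mathbb{D}\mathscr{F}$, up to an $n$-fold shift and a microlocal twist cancelled by the gerbe trivialization, with a pure sheaf of the same microlocal rank along $\Lambda$; applied to $C(a)$ this yields a canonical quasi-isomorphism
\[
  C(a) \,\simeq\, C(a)^{\vee}[-n].
\]
Promoting this symmetry to the level of the mapping cones $\mathrm{Cone}(C(0) \to C(\infty))$ and its dual produces an exact triangle whose long exact sequence, combined with the rank bound above applied in both degrees $i$ and $n-i$, delivers
\[
  b_i(\Lambda;\Bbbk) \,\leq\, |\mathcal{Q}_i(\Lambda)| + |\mathcal{Q}_{n-i}(\Lambda)|.
\]
Summing over $i$ and halving yields the stated bound on $|\mathcal{Q}(\Lambda)|$.

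The main obstacle is constructing the duality exact triangle intrinsically at the sheaf level with the correct signs and grading shifts: this requires tracking the interaction of Verdier duality with the GKS-type convolution kernels that implement positive-time translation, using orientability and spin to trivialize the relevant microlocal gerbe, and matching the resulting shifts against the Conley--Zehnder convention of Section~\ref{grade}. A secondary obstacle is the local model calculation showing that each Reeb chord contributes exactly a rank-$r^2$ summand in the predicted degree to the persistence jump; once these two ingredients are in place, the rest of the argument is a formal bar-counting computation.
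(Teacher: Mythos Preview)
Your overall architecture---build a persistence module from $R\mathrm{Hom}(\mathscr{F},T_{a*}\mathscr{F})$, bound its jumps by Reeb chords, and invoke a duality---matches the paper's second proof in Section~\ref{thmpersist}. But there is a genuine gap at the very first step: your identification $C(0)\cong R\Gamma(\Lambda;\Bbbk)^{\oplus r^2}$ is false. What purity and compact support actually give is $C(0)=Hom(\mathscr{F},\mathscr{F})=Hom_+(\mathscr{F},\mathscr{F})$ (Theorem~\ref{computehom}), and this sits in Sato's exact triangle
\[
Hom_-(\mathscr{F},\mathscr{F})\longrightarrow Hom_+(\mathscr{F},\mathscr{F})\longrightarrow C^*(\Lambda;\Bbbk^{r^2})\xrightarrow{+1}
\]
with a generally nonzero first term (already in the toy example $\mathscr{F}=\Bbbk_{[0,1)}$ on $\mathbb{R}$ one has $Hom_+=\Bbbk$ while $C^*(\Lambda)=\Bbbk^2$). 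If your claim were correct, your jump estimate would yield the too-strong inequality $b_i(\Lambda)\le |\mathcal{Q}_i(\Lambda)|$, which fails in examples. Likewise, your pointwise self-duality $C(a)\simeq C(a)^\vee[-n]$ is not what holds; the paper's Sabloff duality (Theorem~\ref{duality}) relates the two \emph{different} objects $Hom_+$ and $Hom_-$, and it is precisely the interplay of this duality with the exact triangle above that produces the extra $|\mathcal{Q}_{n-i}|$ term.

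The paper's first proof (Section~5.2) runs exactly this way: the Morse inequality (Theorem~\ref{reebinequality}) gives $\dim H^iHom_+\le r^2|\mathcal{Q}_i|$---this part you essentially have---and then the long exact sequence of the triangle together with $H^{i+1}Hom_-\cong (H^{n-i}Hom_+)^\vee$ yields $r^2b_i\le \dim H^iHom_+ +\dim H^{n-i}Hom_+$. The paper's second proof (Section~\ref{thmpersist}) is closer to your persistence picture but works over the whole real line: the crucial observation (Corollary~\ref{morseflow}) is that $C^*(\Lambda;\Bbbk^{r^2})$ is the \emph{microstalk at $u=0$} of the persistence sheaf, not its value; bars then run both right (ending at degree-$i$ chords) and left (starting at degree-$(n-i)$ chords via Proposition~\ref{localcomputemix}), which is how both terms appear. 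To repair your argument, replace the identification of $C(0)$ by the Sato exact triangle and the self-duality of $C(a)$ by the $Hom_+$/$Hom_-$ duality, or equivalently extend your persistence module to $a\in\mathbb{R}$ and read $C^*(\Lambda)$ as the jump at zero.
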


\begin{theorem}\label{mixedsheaf}
    Let $M$ be orientable, $\Lambda \subset T^{*,\infty}_{\tau > 0}(M \times \mathbb{R})$ be a closed chord generic Legendrian submanifold and $\Bbbk$ be a field (and $\Lambda$ is spin when $\Bbbk\neq \mathbb{Z}/2\mathbb{Z}$). If there exists a $\Bbbk$-coefficient sheaf $\mathscr{F} \in Sh^b_\Lambda(M \times \mathbb{R})$ with perfect stalks such that $\mathrm{supp}(\mathscr{F})$ is compact, then
    $$|\mathcal{Q}(\Lambda)| \geq \frac{1}{2}\sum_{i=0}^nb_i(\Lambda; \Bbbk).$$
    Here $b_i(\Lambda; \Bbbk) = \dim_\Bbbk H^i(\Lambda; \Bbbk)$.
\end{theorem}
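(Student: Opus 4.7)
The plan is to re-run the persistent microlocal-sheaf argument underlying Theorem \ref{puresheaf} and extract only the coarser total-count inequality that survives after the purity hypothesis is dropped. The central object is the persistence family
\[
P_s \,:=\, R\mathrm{Hom}(\mathscr{F},\, T_s\mathscr{F}), \qquad s \ge 0,
\]
where $T_s$ denotes Reeb translation by $s$ along $\partial_t$, equipped with the natural structure maps $P_s\to P_{s'}$ for $s\le s'$. Compactness of $\mathrm{supp}(\mathscr{F})$ makes $P_s$ vanish once $s$ exceeds the $t$-diameter of the support plus the longest Reeb chord, while for $s>0$ smaller than the shortest Reeb chord a Kashiwara--Schapira non-characteristic deformation argument identifies $P_s$ with $H^*(\Lambda;\Bbbk)$ up to a universal shift. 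The perfect-stalk hypothesis is precisely what allows this identification to go through without the microlocal rank input used in the pure case.

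Under chord-genericity, $P_s$ is locally constant in $s$ away from the critical values $s=\ell(c)$, $c\in\mathcal{Q}(\Lambda)$. At each such value the mapping cone of the structure map contributes a generator indexed by $c$ to a ``chord complex'' $C^\bullet(\Lambda)$, graded by the Conley--Zehnder index of $c$. For a pure sheaf this complex has vanishing differential for parity reasons, which yields the refined degree-wise inequality of Theorem \ref{puresheaf}. For a sheaf with merely perfect stalks the differentials need no longer vanish, but the elementary bound
\[
\dim_\Bbbk H^*\bigl(C^\bullet(\Lambda)\bigr) \,\le\, \dim_\Bbbk C^\bullet(\Lambda),
\]
controlled by $|\mathcal{Q}(\Lambda)|$, still persists.

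The remaining ingredient is the duality exact triangle announced in the abstract. Verdier duality applied to $\mathscr{F}$ --- legitimized by compact support and the orientability/spin hypotheses --- pairs each Reeb chord with a Sabloff-dual partner of complementary Conley--Zehnder degree and produces an exact triangle relating the small- and large-$s$ ends of the persistence module. The small-$s$ end contributes $H^*(\Lambda;\Bbbk)$ directly and the large-$s$ end contributes its Poincar\'e dual, so summing total dimensions and applying the inequality above forces
\[
|\mathcal{Q}(\Lambda)| \,\ge\, \tfrac{1}{2}\sum_{i=0}^n b_i(\Lambda;\Bbbk),
\]
with the factor of $\tfrac{1}{2}$ arising from the self-dual structure of the chord complex under the Sabloff pairing.

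The hard part will be the bookkeeping of the chord complex under the mixed-sheaf hypothesis: in the pure case, constant microlocal rank $r$ produces matching $r^2$ factors that cancel on both sides, whereas for a sheaf whose microstalks are perfect complexes rather than lines one needs a local microlocal Morse analysis in a neighborhood of each chord, together with the persistence-module formalism, to confirm that the total generator count is proportional to $|\mathcal{Q}(\Lambda)|$ independent of stalk sizes, with the Conley--Zehnder grading matching the Maslov index of the corresponding double point of $\pi_\text{Lag}(\Lambda)$.
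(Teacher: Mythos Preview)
Your overall architecture matches the paper's: the persistence object $u_*\mathscr{H}om(\mathscr{F}_q,\mathscr{F}_r)$, the Sabloff duality (Theorem~\ref{duality}), the exact triangle (Theorem~\ref{exactseq} and its mixed version Theorem~\ref{mixedduality}), and the local microstalk computation at each chord (Proposition~\ref{localcompute}) are exactly the ingredients used. However, you misidentify the two quantities you are comparing, and this is not just cosmetic --- it is the crux of how the mixed case works.

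First, for small $s>0$ the object $P_s = R\mathrm{Hom}(\mathscr{F},T_s\mathscr{F})$ is \emph{not} $H^*(\Lambda;\Bbbk)$; it is $Hom(\mathscr{F},\mathscr{F}) = Hom_+(\mathscr{F},\mathscr{F})$ (Theorem~\ref{computehom}), which in the non-pure case has no direct description in terms of Betti numbers alone. What the exact triangle actually says is that the cone of $Hom_- \to Hom_+$ is $C^*(\Lambda;\mathrm{End}(F))$, where $F$ is the perfect microstalk --- not $C^*(\Lambda;\Bbbk)$. Second, the microstalk at a degree-$d$ chord is $\mathrm{End}(F)[-d]$ (Proposition~\ref{localcompute}), so each chord contributes $\dim H^*\mathrm{End}(F)$ to the total generator count, not one. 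Your last paragraph senses that something like this must happen but does not locate it.

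The actual mechanism is a cancellation: combining duality with the exact triangle gives
\[
2\sum_j \dim H^j Hom_+(\mathscr{F},\mathscr{F}) \;\ge\; \big(\textstyle\sum_i \dim H^i\mathrm{End}(F)\big)\cdot \sum_j b_j(\Lambda;\Bbbk),
\]
while the microlocal Morse inequality (Theorem~\ref{mixedinequality}) gives
\[
\big(\textstyle\sum_i \dim H^i\mathrm{End}(F)\big)\cdot |\mathcal{Q}(\Lambda)| \;\ge\; \sum_j \dim H^j Hom_+(\mathscr{F},\mathscr{F}).
\]
The common factor $\sum_i \dim H^i\mathrm{End}(F)$ is nonzero (since $F$ is a nonzero perfect complex) and divides out. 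So the point is not that the generator count is ``independent of stalk sizes'' --- it isn't --- but that the same scalar multiplies both the chord side and the Betti-number side.
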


\begin{remark}
    The condition that $\mathrm{supp}(\mathscr{F})$ is compact should be thought of as an analogue of the linear at infinity condition on generating families \cite{Genfamily}. See Appendix \ref{appendix}. If we drop this condition, then there will be counterexamples to te bounds. Consider the positive conormal $\nu^{*,\infty}_{M,\tau > 0}(M \times \mathbb{R}) \subset T^{*,\infty}_{\tau > 0}(M \times \mathbb{R})$ (which is just the zero section $M \subset J^1(M)$). Then $\Bbbk_{M \times [0,+\infty)}$ has the prescribed singular support. However that Legendrian has no Reeb chords.
\end{remark}

\begin{remark}
    When there is a sheaf $\mathscr{F} \in Sh^b_\Lambda(M \times \mathbb{R})$ with perfect stalks, then one can show that \cite{Gui} necessarily the Maslov class $\mu(\Lambda) = 0$. However this condition is not necessary to get estimates on the number of Reeb chords. In general, one can consider the triangulated orbit category $Sh^b_\Lambda(M \times \mathbb{R})_{/[1]}$ consisting of sheaves of $1$-cyclic complexes (see \cite{Orbitcat} and \cite[Section 3]{Gui}). When there is a sheaf $\mathscr{F} \in Sh^b_\Lambda(M \times \mathbb{R})_{/[1]}$, then we can still prove that
    $$|\mathcal{Q}(\Lambda)| \geq \frac{1}{2}\sum_{i=0}^nb_i(\Lambda; \Bbbk),$$
    but we do not work out the details here.
\end{remark}

\begin{remark}\label{hori}
    In \cite{EESorientation,EESduality,RizGolestimating}, they imposed the condition that the Legendrian $\Lambda$ is horizontally displaceable, meaning that there exists a Hamiltonian isotopy $\varphi_H^s\,(s \in I)$ such that there are no Reeb chords between $\Lambda$ and $\varphi_H^1(\Lambda)$. In Section \ref{horizontal}, we show that if $\Lambda$ is horizontally displaceable, then any $\mathscr{F} \in Sh^b_\Lambda(M \times \mathbb{R})$ necessarily has compact support.

    However, there are Legendrians that are not horizontally displaceable but admit sheaves with compact supports, for example, the Legendrian in \cite[Remark 1.9]{Genfamily}. Moreover, we show that there also exist Legendrians that are not horizontally displaceable, admit no generating families linear at infinity, but admit sheaves with compact support. See Appendix \ref{appendix}. This means that our theorems work in a slightly more general setting.
\end{remark}

\begin{remark}
    We know that $r$ dimensional representations of the Chekanov-Eliashberg dg algebra should be equivalent to microlocal rank $r$ sheaves (see \cite{RepSheaf}). Therefore, Theorem \ref{puresheaf} is just an analogue of \cite{EESorientation,EESduality,RizGolestimating}. However, Theorem \ref{mixedsheaf} has no direct analogue in the literature to our knowledge.
\end{remark}

    For Reeb chords between a Legendrian $\Lambda$ and its Hamiltonian pushoff $\varphi_H^1(\Lambda)$, we have the following results, analogous to Dimitroglou Rizell and Sullivan \cite{RizSpersist}. Define the \emph{oscillation norm} of the Hamiltonian to be
    $$\|H\|_\text{osc}^\Lambda = \int_0^1\left(\max_{x\in \varphi_H^s(\Lambda)}H_s - \min_{x\in \varphi_H^s(\Lambda)}H_s\right)\,ds.$$
    Denote by $l(\gamma)$ the length of a Reeb chord $\gamma$. Assume that the Maslov class $\mu(\Lambda) = 0$, which ensures the existence of a grading on chords of $\Lambda$ (see Section \ref{grade}), and let
    $$c_i(\Lambda) = \min\{l(\gamma) \mid \gamma \text{ is a Reeb chord,}\,\deg(\gamma) = i \text{ or } n-i\}.$$
    Order them so that $c_{j_0}(\Lambda) \geq c_{j_1}(\Lambda) \geq \cdots \geq c_{j_n}(\Lambda)$.

\begin{theorem}\label{displaceable}
    Let $M$ be orientable, $\Lambda \subset T^{*,\infty}_{\tau > 0}(M \times \mathbb{R})$ be a closed Legendrian submanifold of dimension $n$, and $\Bbbk$ be a field ($\Lambda$ is spin if $\mathrm{\Bbbk} \neq \mathbb{Z}/2\mathbb{Z}$). Suppose there exists a $\Bbbk$-coefficient pure sheaf $\mathscr{F} \in Sh_{\Lambda}^b(M \times \mathbb{R})$ with perfect stalks such that $\mathrm{supp}(\mathscr{F})$ is compact. Let $H_s$ be any Hamiltonian in $T^{*,\infty}_{\tau > 0}(M \times \mathbb{R})$ such that for some $0 \leq k \leq n$,
    $$\|H\|_\text{osc}^\Lambda < c_{j_k}(\Lambda)$$
    and $\varphi_H^1(\Lambda)$ is transverse to the Reeb flow applied to $\Lambda$. Then the number of Reeb chords between $\Lambda$ and $\varphi_H^1(\Lambda)$ is
    $$\mathcal{Q}(\Lambda, \varphi_H^1(\Lambda)) \geq \sum_{i=0}^k b_{j_i}(\Lambda; \Bbbk).$$
    Here $b_j(\Lambda; \Bbbk) = \dim H^j(\Lambda; \Bbbk)$.
\end{theorem}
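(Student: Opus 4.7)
The plan is to encode Reeb chords between $\Lambda$ and $\varphi_H^1(\Lambda)$ in the barcode of a persistence module built from sheaf Hom-spaces, and to use the oscillation-norm hypothesis to compare this barcode with the one attached to self chords, which anchors to $H^\bullet(\Lambda;\Bbbk)$. First, I apply the Guillermou-Kashiwara-Schapira sheaf quantization to the Hamiltonian isotopy $\varphi_H^\bullet$ to obtain an autoequivalence $\Phi_H$ of $Sh^b(M\times\mathbb{R})$ that sends $Sh^b_\Lambda$ to $Sh^b_{\varphi_H^1(\Lambda)}$ and preserves compactness of support; set $\mathscr{G}:=\Phi_H(\mathscr{F})$. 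I then form the two persistence $\mathbb{R}$-modules
$$V^\bullet(s)=\mathrm{Hom}\bigl(\mathscr{F},T_{s*}\mathscr{G}\bigr),\qquad V^\bullet_0(s)=\mathrm{Hom}\bigl(\mathscr{F},T_{s*}\mathscr{F}\bigr),$$
where $T_s\colon(x,t)\mapsto(x,t+s)$ and the structure maps for $s\leq s'$ come from the positive Reeb direction. The barcode endpoints of $V^\bullet$ (respectively $V^\bullet_0$) occur precisely at the lengths of Reeb chords from $\Lambda$ to $\varphi_H^1(\Lambda)$ (respectively self chords of $\Lambda$), with the endpoint degrees prescribed by the Conley-Zehnder convention of Section \ref{grade}.

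Next, I identify the barcode of $V^\bullet_0$ using the duality exact triangle that enters the proof of Theorem \ref{puresheaf}: for $s$ slightly negative the duality triangle identifies $V^\bullet_0(s)$ with $H^\bullet(\Lambda;\Bbbk)$ up to a shift, while each transition at $s$ equal to a self-chord length is prescribed in terms of chord degrees. Consequently the barcode of $V^\bullet_0$ contains, in each cohomological degree $j$, exactly $b_j(\Lambda;\Bbbk)$ essential bars, and any essential bar in degree $j$ has length at least $c_j(\Lambda)$ because a shorter bar would have to be closed off by a self chord of degree $j$ or $n-j$ of length strictly less than $c_j(\Lambda)$, contradicting the definition of $c_j(\Lambda)$.

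The final ingredient is an $\|H_s\|_\text{osc}$-interleaving of $V^\bullet$ and $V^\bullet_0$, obtained by quantizing the path of Hamiltonians from $0$ to $H$ and extracting energy bounds on the action shift produced by convolution with the GKS kernel. Given this interleaving, every essential bar of $V^\bullet_0$ of length strictly greater than $\|H_s\|_\text{osc}$ survives as a distinct bar in $V^\bullet$, and each endpoint of a surviving bar certifies a Reeb chord between $\Lambda$ and $\varphi_H^1(\Lambda)$. Under the hypothesis $\|H_s\|_\text{osc}<c_{j_k}(\Lambda)$, every essential bar of degree $j_i$ with $i\leq k$ has length $\geq c_{j_i}(\Lambda)\geq c_{j_k}(\Lambda)>\|H_s\|_\text{osc}$ and hence survives, yielding at least $\sum_{i=0}^k b_{j_i}(\Lambda;\Bbbk)$ chords.

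The main obstacle will be proving the $\|H_s\|_\text{osc}$-interleaving with the sharp oscillation-norm constant rather than some weaker Hofer-type quantity; this requires a sheaf-theoretic version of the continuity estimates of Dimitroglou Rizell-Sullivan, tracking the action shift step-by-step through the GKS kernel as the Hamiltonian varies. A secondary subtlety is matching the barcode-degree bookkeeping to the Conley-Zehnder convention from Section \ref{grade}, and using the assumed transversality of $\varphi_H^1(\Lambda)$ with the Reeb flow applied to $\Lambda$ to rule out endpoint collisions that would deflate the chord count.
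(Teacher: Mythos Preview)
Your proposal is correct and follows essentially the same strategy as the paper. The paper packages your persistence modules $V^\bullet_0$ and $V^\bullet$ as the constructible sheaves $\mathscr{H}om_{(-\infty,+\infty)}(\mathscr{F},\mathscr{F})=u_*\mathscr{H}om(\mathscr{F}_q,\mathscr{F}_r)$ and $\mathscr{H}om_{(-\infty,+\infty)}(\mathscr{F},\Phi_H^1\mathscr{F})$ on $\mathbb{R}$, identifies the bars anchored at $u=0$ with $H^\bullet(\Lambda;\Bbbk^{r^2})$ via the microstalk computation of Corollary~\ref{morseflow} (this is the precise version of your ``for $s$ slightly negative'' step), and uses the local microstalk calculation (Propositions~\ref{localcompute} and~\ref{localcomputemix}) to see that the other endpoint of such a bar is a degree $j$ or $n-j$ chord, hence has length $\ge c_j(\Lambda)$. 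The ``main obstacle'' you flag, namely the sharp $\|H\|_{\mathrm{osc}}$-interleaving, is exactly the content of Asano--Ike's theorem (Theorem~\ref{asanoike}), which the paper invokes to prove Theorem~\ref{persistcontinue}; so this is not something you need to redo from scratch.
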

\begin{remark}
    It is shown \cite{RizSpersist} that this bound is sharp for Legendrian unknotted spheres with a single Reeb chord.
\end{remark}
\begin{remark}
    Dimitroglou Rizell-Sullivan considered \cite{RizSpersist} Legendrians that only admit augmentations over a subalgebra of the Chekanov-Eliashberg dg algebra $\mathcal{A}^l(\Lambda) \subset \mathcal{A}(\Lambda)$. On the sheaf side, Asano-Ike \cite{AsanoIkeimmersion} proved the above inequality for any Legendrian (including loose Legendrians) when $\|H\|_{osc}^\Lambda \leq \min\{l(\gamma) \mid \gamma \text{ is a Reeb chord}\}$ using a sheaf $\mathscr{F}\in Sh^b_{\Lambda_q \cup \Lambda_r}(M \times \mathbb{R} \times (0,l))_{/[1]}$ (see Definition \ref{translation}) which always exists. Hence we expect that by using Asano-Ike's technique \cite{AsanoIkeimmersion}, one will get analogous results.
\end{remark}

    We are also able to recover the nonsqueezing result of Legendrians admitting sheaves into a stablized/loose Legendrian \cite{RizSpersist} as a byproduct. For the definition of a stablized or loose Legendrian submanifold, see \cite{Loose} or \cite[Chapter 7]{CE}.

\begin{definition}[Dimitroglou Rizell-Sullivan \cite{RizSpersist}]\label{squeezedef}
    Let $U \subset P \times \mathbb{R}$ be an open subset with $H_n(U; \mathbb{Z}/2\mathbb{Z}) \neq 0$. Then a Legendrian submanifold $\Lambda \subset P \times \mathbb{R}$ can be squeezed into $U$ if there is a Legendrian isotopy $\Lambda_t$ with $\Lambda_0 = \Lambda$ and
    $$\Lambda_1 \subset U, \,\,\, [\Lambda_1] \neq 0 \in H_n(U; \mathbb{Z}/2\mathbb{Z}).$$
\end{definition}

\begin{theorem}\label{squeeze}
    Let $\Lambda_\text{loose} \subset T^{*,\infty}_{\tau > 0}(\mathbb{R}^{n+1})$ be a closed stablized/loose Legendrian, and $\Lambda \subset T^{*,\infty}_{\tau > 0}(\mathbb{R}^{n+1})$ be a Legendrian so that there exists $\mathscr{F} \in Sh^b_\Lambda(\mathbb{R}^{n+1})$ with perfect stalks such that $\mathrm{supp}(\mathscr{F})$ is compact and the microstalk has odd Euler characteristic. Then $\Lambda$ cannot be squeezed into a contact tubular neighbourhood of $\Lambda_\text{loose}$.
\end{theorem}

    There are two main difficulties to prove these results using microlocal sheaf theory. Firstly, we need to understand how to directly see the Reeb chords from the homomorphism of sheaves $Hom(\mathscr{F, F})$. Secondly, we need to have the algebraic results, for example a duality and exact triangle, that gives bounds on the rank of $Hom(\mathscr{F, F})$.

\subsubsection{Relating Reeb chords to sheaves}
    What we do to solve the first problem is to add in an extra $\mathbb{R}$ factor corresponding to the $\mathbb{R}$-filtration on Reeb chords and extend the sheaf from $M \times \mathbb{R}$ to $M \times \mathbb{R}^2$ to see the all the Reeb chords explicitly in the extra $\mathbb{R}$ factor. The author learned the idea from the lecture notes of Shende, but this goes back to Tamarkin \cite[Chapter 3]{Tamarkin1}, followed by Guillermou-Schapira \cite[Section 3 \& 4]{GS}, Guillermou \cite[Section 13 \& 16]{Gui}, Ike \cite{Ike}, Asano-Ike \cite{AsanoIkeimmersion}, and very recently Kuo \cite[Section 3]{Kuo}.

\begin{definition}\label{translation}
    Let $q \colon M \times \mathbb{R}^2 \rightarrow M \times \mathbb{R}$ be $q(x, t, u) = (x, t)$ and $r\colon M \times \mathbb{R}^2 \rightarrow M \times \mathbb{R}$ be $r(x, t, u) = (x, t-u)$. For a Legendrian submanifold $\Lambda \subset J^1(M) \cong T^{*,\infty}_{\tau > 0}(M \times \mathbb{R})$, let
    \[\begin{split}
    \Lambda_q &= \{(x, \xi, t, \tau, u, 0) \mid (x, \xi, t, \tau) \in \Lambda\}, \\
    \Lambda_r &= \{(x, \xi, t + u, \tau, u, -\tau) \mid (x, \xi, t, \tau \in \Lambda)\}.
    \end{split}\]
    For a sheaf $\mathscr{F} \in Sh^b(M \times \mathbb{R})$, let
    $$\mathscr{F}_q = q^{-1}\mathscr{F}, \,\,\, \mathscr{F}_r = r^{-1}\mathscr{F}.$$
\end{definition}

    In the definition $\Lambda_q$ (resp.~$\mathscr{F}_q$) is the movie of $\Lambda$ (resp.~$\mathscr{F}$) under the identity contact flow, while $\Lambda_r$ (resp.~$\mathscr{F}_r$) is the movie of $\Lambda$ (resp.~$\mathscr{F}$) under the vertical translation $T_l\,(l \in \mathbb{R})$ defined by the Reeb flow. As we isotope the Legendrian $\Lambda$ via the Reeb flow to $T_l(\Lambda)$, the lengths of Reeb chords from $\Lambda$ to $T_l(\Lambda)$ coming from self chords of $\Lambda$ will decrease. The time when the length of some chord shrinks to zero will be detected by the microlocal behaviour of $\mathscr{H}om(\mathscr{F}_q, \mathscr{F}_r)$.

\begin{definition}\label{defhom}
    Let the projection to the last factor $M \times \mathbb{R}^2 \rightarrow \mathbb{R}, \,(x, t, u) \mapsto u$ be $u$. For $\Lambda \subset T^{*,\infty}_{\tau > 0}(M \times \mathbb{R})$ and $\mathscr{F, G} \in Sh^b_\Lambda(M \times \mathbb{R})$, let
    \[\begin{split}
    Hom_-(\mathscr{F, G}) = \Gamma(u^{-1}([0,+\infty)), \mathscr{H}om(\mathscr{F}_q, \mathscr{G}_r)),\\
    Hom_+(\mathscr{F, G}) = \Gamma(u^{-1}((0,+\infty)), \mathscr{H}om(\mathscr{F}_q, \mathscr{G}_r)).
    \end{split}\]
\end{definition}
\begin{remark}
    For readers who are familiar with generating families of Legendrians \cite{ViterboGen,TrayGen,Genfamily}, they may notice that this definition is similar to the generating family homology and cohomology, where the extra $\mathbb{R}$-factor encodes the value of the difference function. Our definition is partly inspired by that.
\end{remark}
\begin{remark}\label{tamarkin}
    For those who are familiar with the language of Tamarkin categories \cite{Tamarkin1,GS,Ike,AsanoIkeimmersion}, they may realize that this can equivalently be phrased in terms of the internal Hom in Tamarkin category, where there are also two copies of $\mathbb{R}$-factors and the internal Hom is defined using the addition map, which is equivalent to translating one of the sheaves by the Reeb flow. Our definition is also a reformulation of that.
\end{remark}

    In Section \ref{reebchordsection}, we will provide a systematic way to relate Reeb chords to the positive homomorphism of sheaves $Hom_+(\mathscr{F, F})$. The idea is similar to relating singular (co)homology with Morse critical points. In particular, the following Morse inequality holds.

\begin{theorem}\label{reebinequality}
    For $\Lambda \subset T^{*,\infty}_{\tau > 0}(M \times \mathbb{R})$ a chord generic Legendrian and $\mathscr{F} \in Sh^b_\Lambda(M \times \mathbb{R})$ a microlocal rank $r$ sheaf such that $\mathrm{supp}(\mathscr{F})$ is compact. Then for any $k \in \mathbb{Z}$,
    $$r^2\sum_{j\leq k}(-1)^{k-j}|\mathcal{Q}_j(\Lambda)| \geq \sum_{j\leq k}(-1)^{k-j}\dim H^jHom_+(\mathscr{F, F}).$$
    In particular, for any $j\in \mathbb{Z}$, $r^2|\mathcal{Q}_j(\Lambda)| \geq \dim H^jHom_+(\mathscr{F, F})$.
\end{theorem}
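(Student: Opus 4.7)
The strategy is to treat the projection $u: M \times \mathbb{R}^2 \to \mathbb{R}$ as a Morse-type function acting on the complex $\mathscr{H}om(\mathscr{F}_q, \mathscr{F}_r)$, whose critical values on $\{u > 0\}$ are exactly the Reeb chord lengths of $\Lambda$, and to run the standard sheaf-theoretic Morse inequality on the action filtration that results.

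The first step is to bound $SS(\mathscr{H}om(\mathscr{F}_q, \mathscr{F}_r))$ on $\{u > 0\}$. The Kashiwara--Schapira estimate $SS(\mathscr{H}om(F,G)) \subset SS(F)^a \widehat{+} SS(G)$ applied to $SS(\mathscr{F}_q) = \Lambda_q$ and $SS(\mathscr{F}_r) = \Lambda_r$ forces this singular support, projected to the $u$-axis on $\{u > 0\}$, to be contained in the finite set of chord lengths $l_1 < \dots < l_N$ (by chord genericity, the contribution above each $l_k$ is a finite set of isolated points, one per chord of that length). Between consecutive critical values, Kashiwara--Schapira's non-characteristic deformation lemma implies $\Gamma(u^{-1}((0, c]), \mathscr{H}om(\mathscr{F}_q, \mathscr{F}_r))$ is independent of $c$; across each $l_k$ there is an exact triangle
$$F_{\leq l_{k-1}} \to F_{\leq l_k} \to F_{\{l_k\}} \xrightarrow{+1}$$
in which $F_{\{l_k\}}$ is the local cohomology supported at the chords of length $l_k$. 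Iterating produces a finite filtration of $Hom_+(\mathscr{F}, \mathscr{F})$ whose associated graded pieces are the local contributions at each Reeb chord.

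Next I would compute the local contribution at a chord $\gamma$ of degree $j$. Since $\mathscr{F}$ has microlocal rank $r$, the microstalks of $\mathscr{F}_q$ and $\mathscr{F}_r$ at the intersection point corresponding to $\gamma$ are each free of rank $r$ and concentrated in a single cohomological degree; the relative Conley--Zehnder grading between the two branches is by definition $\deg(\gamma) = j$. Unwinding the definition of $\mathscr{H}om$ and using a non-characteristic base change near $\gamma$, I expect the local contribution to be
$$\mathrm{End}(\Bbbk^r)[-j] \simeq \Bbbk^{r^2}[-j].$$
Plugging these graded pieces into the filtration of the previous step and invoking the standard spectral sequence (or iterated mapping cone) Morse inequality then yields
$$r^2 \sum_{j \leq k} (-1)^{k-j} |\mathcal{Q}_j(\Lambda)| \geq \sum_{j \leq k} (-1)^{k-j} \dim H^j Hom_+(\mathscr{F}, \mathscr{F}),$$
which is the claimed inequality; setting $k=j$ and noting that all quantities are nonnegative gives the corollary $r^2 |\mathcal{Q}_j(\Lambda)| \geq \dim H^j Hom_+(\mathscr{F}, \mathscr{F})$.

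The main obstacle is the local computation: pinning down both that the rank of the graded piece is exactly $r^2$ and that its cohomological shift equals $-j$. The rank count requires identifying the microstalk of $\mathscr{H}om(\mathscr{F}_q,\mathscr{F}_r)$ at $\gamma$ with $\mathrm{End}$ of the two microstalks, which is a local, non-characteristic base change once the singular supports $\Lambda_q$ and $\Lambda_r$ are shown to meet transversely at $\gamma$ (this uses chord genericity and the geometry of the extra $u$-factor). Getting the degree right requires matching the Maslov/microlocal grading conventions on sheaves (as in \cite{Gui}) with the Conley--Zehnder grading on chords set up in Section \ref{grade}; this is convention-heavy but reduces to a model computation with two transverse simple sheaves in a Darboux chart.
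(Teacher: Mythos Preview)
Your proposal is correct and follows essentially the same route as the paper: apply the microlocal Morse inequality (Proposition \ref{morseinequality}) to $\mathscr{H}om(\mathscr{F}_q,\mathscr{F}_r)$ with the function $-u$, using Lemma \ref{reebchord-hom} for the singular support estimate and Proposition \ref{localcompute} for the local computation of the microstalk as $\Bbbk^{r^2}[-d_i]$. The paper simply cites the Kashiwara--Schapira Morse inequality rather than unpacking the filtration/spectral sequence as you do, and the local computation is carried out there by an explicit \v{C}ech calculation in a Morse-normalized front model rather than an abstract base change, but the content is the same. One small slip: the weak inequality $r^2|\mathcal{Q}_j|\geq \dim H^jHom_+$ does not follow from ``setting $k=j$'' in the strong inequality; it follows directly from the filtration (each cohomology class in degree $j$ is represented by a degree-$j$ element of some graded piece), which is how Proposition \ref{morseinequality} states it.
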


\subsubsection{Duality exact triangle}
    We prove a duality exact triangle in microlocal sheaf theory, which is parallel to the duality exact triangle in Legendrian contact homologies \cite{Sabduality,EESduality}, in order to deduce Theorem \ref{puresheaf} and \ref{mixedsheaf}. While microlocal sheaves are equivalent to representations of Chekanov-Eliashberg dg algebras, our proof is purely sheaf theoretic.

\begin{theorem}[Sabloff Duality]\label{duality}
    Let $M$ be orientable. For $\Lambda \subset T^{*,\infty}_{\tau > 0}(M \times \mathbb{R})$ and $\mathscr{F, G} \in Sh^b_\Lambda(M \times \mathbb{R})$ with perfect stalks such that $\mathrm{supp}(\mathscr{F}), \mathrm{supp}(\mathscr{G})$ are compact,
    $$Hom_+(\mathscr{F, G}) \simeq D'Hom_-(\mathscr{G, F})[-n-1],$$
    where $D'\mathscr{F} = \mathscr{H}om(\mathscr{F}, \Bbbk_{M \times \mathbb{R}})$.
\end{theorem}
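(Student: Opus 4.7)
The plan is to derive Sabloff duality from global Verdier duality on $X = M \times \mathbb{R}^2$, combined with an involution that interchanges $q$ and $r$, together with a microlocal Poincar\'e identification of $\mathscr{F}$ with $D\mathscr{F}$ along the Legendrian $\Lambda$. The first observation is that $\mathscr{K}_{\mathscr{F,G}} := \mathscr{H}om(\mathscr{F}_q, \mathscr{G}_r)$ has compact support in $X$: its support lies in $\{(x, t, u) : (x, t) \in \mathrm{supp}(\mathscr{F}),\ (x, t-u) \in \mathrm{supp}(\mathscr{G})\}$, in which $u = t - (t-u)$ is automatically bounded once $\mathrm{supp}(\mathscr{F})$ and $\mathrm{supp}(\mathscr{G})$ are compact. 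Consequently $R\Gamma$ and $R\Gamma_c$ agree on any open set containing this support, which is essential for applying Verdier duality cleanly.

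Next, the diffeomorphism $\sigma : X \to X$, $\sigma(x, t, u) = (x, t-u, -u)$, is an involution satisfying $q \circ \sigma = r$, $r \circ \sigma = q$, and $u \circ \sigma = -u$, so that $\sigma$ exchanges $\{u \geq 0\}$ with $\{u \leq 0\}$. Pulling back through $\sigma$ immediately gives
$$Hom_-(\mathscr{G, F}) \simeq R\Gamma(\{u \leq 0\}, \mathscr{H}om(\mathscr{G}_r, \mathscr{F}_q)).$$
The core computation is then the sheaf-level identity
$$D\,\mathscr{H}om(\mathscr{G}_r, \mathscr{F}_q) \simeq \mathscr{H}om(\mathscr{F}_q, \mathscr{G}_r)[n+2],$$
obtained by combining $D(q^{-1}\mathscr{F}) \simeq q^{-1}(D\mathscr{F})[1]$ (because $q$ is a trivial $\mathbb{R}$-submersion, and similarly for $r$), the standard swap $D\,R\mathscr{H}om(A, B) \simeq R\mathscr{H}om(B, D A)$, and a microlocal Poincar\'e duality identifying $D\mathscr{F}$ with $\mathscr{F}[n+1]$ along $\Lambda$ (valid because $\Lambda$ is a closed, spin Legendrian with vanishing Maslov class, so the orientation sheaf and index gerbe are both trivial).

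Taking $R\Gamma(\{u \leq 0\}, -)$ of both sides of this identity, then applying Verdier duality on $X$ (which, given compact support, interchanges $R\Gamma$ over a closed set with the supports-in-that-closed-set of the dual sheaf, up to a shift of $[1]$) and the standard exact triangle on $X$ relating $R\Gamma_{\{u \leq 0\}}$ with $R\Gamma(\{u > 0\}, -)$, one assembles $D'Hom_-(\mathscr{G, F}) \simeq Hom_+(\mathscr{F, G})[n+1]$, equivalent to the claimed statement. The principal obstacle is promoting the microlocal identification $D\mathscr{F} \simeq \mathscr{F}[n+1]$ to an isomorphism in the ambient category $Sh^b_\Lambda(M \times \mathbb{R})$ that can be transported through $\mathscr{H}om$; this relies on Guillermou's microlocalization together with careful use of the spin structure (to trivialize the orientation sheaf on $\Lambda$) and the vanishing Maslov class (to trivialize the index gerbe and fix a global grading). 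A secondary bookkeeping challenge is ensuring that the various shifts --- $[n+2]$ from Verdier duality on the $(n+2)$-dimensional $X$, the $[1]$'s from $q^!$ and $r^!$, the $[1]$ from the open/closed half-space interchange, and the $[n+1]$'s from the microlocal Poincar\'e identification on both arguments --- cancel to leave exactly $[-n-1]$ in the final isomorphism.
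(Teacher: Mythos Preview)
Your overall architecture --- compact support, the involution $\sigma(x,t,u)=(x,t-u,-u)$ swapping $q\leftrightarrow r$ and $\{u\ge 0\}\leftrightarrow\{u\le 0\}$, then a Verdier duality computation on $X=M\times\mathbb{R}^2$ --- is sound, and in fact your $\sigma$ is a cleaner way to obtain
\[
Hom_-(\mathscr{G,F})\;\simeq\;\Gamma(\{u\le 0\},\mathscr{H}om(\mathscr{G}_r,\mathscr{F}_q))
\]
than the paper's reflection-plus-Hamiltonian-plus-GKS argument. Your ``core computation'' $D\,\mathscr{H}om(\mathscr{G}_r,\mathscr{F}_q)\simeq\mathscr{H}om(\mathscr{F}_q,\mathscr{G}_r)[n+2]$ is also correct as a statement.

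The genuine gap is in how you propose to \emph{prove} that core identity. There is no isomorphism $D\mathscr{F}\simeq\mathscr{F}[n+1]$ in $Sh^b_\Lambda(M\times\mathbb{R})$: since $SS^\infty(D\mathscr{F})=-SS^\infty(\mathscr{F})\subset T^{*,\infty}_{\tau<0}(M\times\mathbb{R})$ while $SS^\infty(\mathscr{F})\subset T^{*,\infty}_{\tau>0}(M\times\mathbb{R})$, the two sheaves cannot even lie in the same category unless $\Lambda=\emptyset$. Whatever ``microlocal Poincar\'e duality along $\Lambda$'' means in $\mu Sh^b_\Lambda$, it does not lift to an isomorphism of sheaves on $M\times\mathbb{R}$ that you can feed into $\mathscr{H}om$. (This also means the spin and Maslov hypotheses you import are not needed --- the theorem as stated has neither.)

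The paper gets the core identity by a different, purely formal route that avoids any self-duality of $\mathscr{F}$. For constructible $\mathscr{F}_q,\mathscr{G}_r$ one has
\[
\mathscr{H}om(\mathscr{G}_r,\mathscr{F}_q)\;\simeq\;D\big(D\mathscr{F}_q\otimes\mathscr{G}_r\big),
\]
so by biduality $D\,\mathscr{H}om(\mathscr{G}_r,\mathscr{F}_q)\simeq D\mathscr{F}_q\otimes\mathscr{G}_r = D'\mathscr{F}_q\otimes\mathscr{G}_r\,[n+2]$. The key geometric input is that $\Lambda_q\cap\Lambda_r=\emptyset$ (the Reeb direction is never tangent to $\Lambda$), so $SS^\infty(\mathscr{F}_q)\cap SS^\infty(\mathscr{G}_r)=\emptyset$ and hence $D'\mathscr{F}_q\otimes\mathscr{G}_r\simeq\mathscr{H}om(\mathscr{F}_q,\mathscr{G}_r)$. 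Plugging this back in gives exactly your core identity, and the rest of your outline then goes through.
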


\begin{theorem}[Sabloff-Sato Exact Triangle]\label{exactseq}
    For $\Lambda \subset T^{*,\infty}_{\tau > 0}(M \times \mathbb{R})$ and $\mathscr{F} \in Sh^b_\Lambda(M \times \mathbb{R})$ a microlocal rank $r$ sheaf with perfect stalks such that $\mathrm{supp}(\mathscr{F})$ is compact, we have an exact triangle
    $$Hom_-(\mathscr{F, F}) \rightarrow Hom_+(\mathscr{F, F}) \rightarrow C^*(\Lambda; \Bbbk^{r^2}) \xrightarrow{+1}.$$
\end{theorem}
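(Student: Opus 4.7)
The plan is to derive the triangle from the open-closed decomposition of $\{u \geq 0\}$ and identify the sections supported at $\{u=0\}$ with $C^*(\Lambda; \Bbbk^{r^2})$ (up to a degree shift) via microlocal sheaf methods.

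I would set $Y = M \times \mathbb{R}_t \times [0,\infty)$, $Z = M \times \mathbb{R}_t \times \{0\}$, and $\mathscr{K} := R\mathscr{H}om(\mathscr{F}_q, \mathscr{F}_r)$. Applying the standard distinguished triangle $R\Gamma_Z \to \mathrm{id} \to (\cdot)|_{Y \setminus Z} \xrightarrow{+1}$ to $\mathscr{K}|_Y$ and taking global sections yields
$$R\Gamma_Z(Y, \mathscr{K}) \to Hom_-(\mathscr{F}, \mathscr{F}) \to Hom_+(\mathscr{F}, \mathscr{F}) \xrightarrow{+1}.$$
Rotating the triangle reduces the theorem to constructing a canonical isomorphism $R\Gamma_Z(Y, \mathscr{K}) \simeq C^*(\Lambda; \Bbbk^{r^2})[-1]$.

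For this I would analyze the singular support of $\mathscr{K}$ near $Z$. Using $SS(R\mathscr{H}om) \subset \Lambda_q^a + \Lambda_r$ and the explicit expressions in Definition \ref{translation}, at a boundary point over a generic $(x,t)$ of the front, the nontrivial covector in $SS(\mathscr{K})$ has the form $(0, 0, -\tau)$ for $(x, t; \xi, \tau) \in \Lambda$: the inward $u$-conormal weighted by the Reeb covector. This is precisely the singular support structure arising in Kashiwara-Schapira's microlocalization of $R\mathscr{H}om(\mathscr{F}, \mathscr{F})$ along $\Lambda$; via the Sato specialization/microlocalization triangle together with the canonical identification $\mathscr{F}_q|_{u=0} = \mathscr{F}_r|_{u=0} = \mathscr{F}$, one obtains
$$R\Gamma_Z(Y, \mathscr{K}) \simeq R\Gamma(\Lambda, \mu hom(\mathscr{F}, \mathscr{F})|_\Lambda)[-1],$$
where the shift $[-1]$ reflects the codimension-one inclusion together with the Reeb normalization. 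The microlocal rank $r$ condition then gives $\mu hom(\mathscr{F}, \mathscr{F})|_\Lambda \simeq \Bbbk^{r^2}_\Lambda$ fibrewise, so $R\Gamma$ over $\Lambda$ produces $C^*(\Lambda; \Bbbk^{r^2})[-1]$, completing the identification.

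The main technical obstacle is this identification of $R\Gamma_Z(Y, \mathscr{K})$ with $R\Gamma(\Lambda, \mu hom(\mathscr{F}, \mathscr{F})|_\Lambda)[-1]$: aligning the ``sections supported at the boundary'' functor with Kashiwara-Schapira's microlocal Hom requires careful handling of the specific inward conormal singular support structure described above and precise tracking of the degree shift and orientation data coming from the codimension-one inclusion and the Reeb direction. The compact support hypothesis on $\mathscr{F}$ is used to ensure that the global sections computation is not affected by behavior at infinity in the $t$-direction of $M \times \mathbb{R}$, and to guarantee that the local microlocal identification globalizes to the desired statement on all of $\Lambda$.
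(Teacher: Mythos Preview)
Your strategy---open/closed decomposition of $\{u\geq 0\}$ and then identifying the boundary term $R\Gamma_Z(Y,\mathscr{K})$ with $C^*(\Lambda;\Bbbk^{r^2})[-1]$---is a genuinely different route from the paper's. The paper does \emph{not} compute the boundary term directly. Instead it invokes Sato's exact triangle (Theorem~\ref{sato}) on the base $M\times\mathbb{R}$, obtaining $\Gamma(D'\mathscr{F}\otimes\mathscr{F})\to Hom(\mathscr{F},\mathscr{F})\to C^*(\Lambda;\Bbbk^{r^2})$, and then uses the already-proved Theorem~\ref{computehom} to identify the first two terms with $Hom_\mp(\mathscr{F},\mathscr{F})$; the bulk of the argument is checking that the resulting square commutes. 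In fact the paper obtains your identification $\Gamma_{u=0}(i_{u\geq 0}^{-1}\mathscr{K})\simeq C^*(\Lambda;\Bbbk^{r^2})[-1]$ only \emph{after} the theorem, as Corollary~\ref{morseflow}.

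The gap in your proposal is exactly the step you flag as the ``main technical obstacle'': you have not actually proved $R\Gamma_Z(Y,\mathscr{K})\simeq R\Gamma(\Lambda,\mu hom(\mathscr{F},\mathscr{F})|_\Lambda)[-1]$. The appeal to ``Sato specialization/microlocalization triangle together with $\mathscr{F}_q|_{u=0}=\mathscr{F}_r|_{u=0}=\mathscr{F}$'' is not a proof. On all of $M\times\mathbb{R}^2$ one already has $\mathscr{K}\simeq D'\mathscr{F}_q\otimes\mathscr{F}_r$ (since $\Lambda_q\cap\Lambda_r=\emptyset$), so there is no Sato triangle for $\mathscr{K}$ with a nonzero $\mu hom$ term on the doubled space; the $\mu hom(\mathscr{F},\mathscr{F})$ you want lives on $T^{*,\infty}(M\times\mathbb{R})$, a different space. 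Bridging $R\Gamma_Z(Y,\mathscr{K})$ to that object requires either a careful specialization argument along $\{u=0\}$ that you have not supplied (and which is delicate precisely because, as the paper's remark after Theorem~\ref{computehom} notes, $i_{u=0}^{-1}\mathscr{H}om(\mathscr{F}_q,\mathscr{F}_r)\not\simeq\mathscr{H}om(\mathscr{F},\mathscr{F})$), or else the computations of Theorem~\ref{computehom}, at which point you are back to the paper's proof. If you want to pursue your route, you must either carry out that specialization argument in full or find an independent local computation of the boundary microstalk; as written, the proposal defers the entire content of the theorem to an unproved claim.
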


\begin{remark}
    As is shown in the name, this exact triangle is coming from Sato's exact triangle which is well known in microlocal sheaf theory. See \cite[Equation 2.17]{Gui} or \cite[Equation 1.3.5]{Guisurvey}. One can find results of a similar flavour in \cite[Section 4.3 \& 4.4]{Ike}.
\end{remark}
\begin{remark}\label{relativecy}
    Theorem \ref{exactseq} also holds for different sheaves $\mathscr{F}$ and $\mathscr{G}$ (though the third term may be replaced by cochains on $\Lambda$ twisted by a local system). In fact, we conjecture that the duality and exact sequence fit into a commutative diagram. Namely, let $Sh^b_{\Lambda,+}(M \times \mathbb{R})_0$ (resp.~$Sh^b_{\Lambda,-}(M \times \mathbb{R})_0$) be the subcategory consisting only of sheaves with perfect stalks and compact supports with morphisms being $Hom_+(-, -)$ (resp.~$Hom_-(-, -)$). Then
    \[\xymatrix{
    Sh^b_{\Lambda,+}(M \times \mathbb{R})_0[n] \ar[r]^{\hspace{10pt}m_\Lambda[n]} \ar[d] & m_\Lambda^*Loc^b_\Lambda(\Lambda)[n] \ar[r] \ar[d]^{PD} & Sh^b_{\Lambda,-}(M \times \mathbb{R})_0[n+1] \ar[d] \\
    D'Sh^b_{\Lambda,-}(M \times \mathbb{R})_0[-1] \ar[r] & D'(m_\Lambda^*Loc^b_\Lambda(\Lambda)) \ar[r]^{D'm_\Lambda\hspace{10pt}} & D'Sh^b_{\Lambda,+}(M \times \mathbb{R})_0,
    }\]
    which should suggest that $m_\Lambda\colon Sh^b_{\Lambda,+}(M \times \mathbb{R})_0 \rightarrow Loc^b_\Lambda(\Lambda)$ is a relative right Calabi-Yau functor \cite{relativeCY}.
\end{remark}

    We also show that our definition of $Hom_+(-,-)$ coincides with the ordinary $Hom(-,-)$. Since the augmentation category $\mathcal{A}ug_+$ is equivalent to the microlocal sheaf category with morphism space $Hom(-,-)$ \cite{AugSheaf}, this tells us that $Hom_+(-,-)$ is indeed the correct analogue of morphisms in $\mathcal{A}ug_+$.

\begin{theorem}\label{computehom}
    For a Legendrian $\Lambda \subset T^{*,\infty}_{\tau > 0}(M \times \mathbb{R})$ and sheaves $\mathscr{F, G} \in Sh^b_\Lambda(M \times \mathbb{R})$ with perfect stalks such that $\mathrm{supp}(\mathscr{F}), \mathrm{supp}(\mathscr{G})$ are compact,
    $$Hom_-(\mathscr{F, G}) \simeq \Gamma(D'\mathscr{F} \otimes \mathscr{G}), \,\,\, Hom_+(\mathscr{F, G}) \simeq Hom(\mathscr{F, G}).$$
\end{theorem}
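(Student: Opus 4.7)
The plan is to analyze $\mathcal{H}om(\mathscr{F}_q, \mathscr{G}_r)$ on $M \times \mathbb{R}^2$ microlocally. First I would compute its singular support: from the pullback formulas $SS(\mathscr{F}_q) \subset \{\nu = 0\}$ and $SS(\mathscr{G}_r) \subset \{\nu = -\tau\}$, the sum formula $SS(\mathcal{H}om(\mathscr{A}, \mathscr{B})) \subset SS(\mathscr{A})^a \widehat{+} SS(\mathscr{B})$ gives $SS(\mathcal{H}om(\mathscr{F}_q, \mathscr{G}_r)) \subset \{\nu \leq 0\}$, since $\tau \geq 0$ on the cone over $\Lambda \subset \{\tau > 0\}$. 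This is the key microlocal input that controls propagation in the $u$-codirection. Moreover $q \circ i_0 = r \circ i_0 = \mathrm{id}$ for the inclusion $i_0: M \times \mathbb{R} \hookrightarrow M \times \mathbb{R}^2$ at $u = 0$, so the restriction of $\mathcal{H}om(\mathscr{F}_q, \mathscr{G}_r)$ to $\{u = 0\}$ recovers $\mathcal{H}om(\mathscr{F}, \mathscr{G})$, providing the bridge to $Hom(\mathscr{F, G})$.

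For the identity $Hom_+(\mathscr{F, G}) \simeq Hom(\mathscr{F, G})$, I would use the adjunction
\[
Hom_+(\mathscr{F, G}) = Hom(\mathscr{F}_q \otimes \Bbbk_{\{u > 0\}}, \mathscr{G}_r) = Hom(\mathscr{F}, R(qj)_*j^{-1}r^{-1}\mathscr{G}),
\]
where $j: \{u > 0\} \hookrightarrow M \times \mathbb{R}^2$ is the open inclusion. Via the substitution $s = t - u$, the sheaf $R(qj)_*j^{-1}r^{-1}\mathscr{G}$ realizes as the Tamarkin-type convolution $\mathscr{G} *_t \Bbbk_{(0, \infty)}$. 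For $\mathscr{G}$ with $SS \subset \{\tau > 0\} \cup 0$, this convolution is isomorphic to $\mathscr{G}$ in the localized category $Sh^b_\Lambda$ (Tamarkin-type stabilization for positive-cone sheaves), yielding $Hom_+(\mathscr{F, G}) \simeq Hom(\mathscr{F, G})$.

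For the identity $Hom_-(\mathscr{F, G}) \simeq \Gamma(D'\mathscr{F} \otimes \mathscr{G})$, I would use the natural duality pairing map $q^{-1}D'\mathscr{F} \otimes r^{-1}\mathscr{G} \to \mathcal{H}om(\mathscr{F}_q, \mathscr{G}_r)$ coming from $D'\mathscr{F} \otimes \mathscr{F} \to \Bbbk$. Taking $\Gamma(\{u \geq 0\}, -)$ of the left-hand side and applying the projection formula for $q$ computes it as $\Gamma(D'\mathscr{F} \otimes (\mathscr{G} *_t \Bbbk_{[0, \infty)}))$, and the Tamarkin identification $\mathscr{G} *_t \Bbbk_{[0, \infty)} \simeq \mathscr{G}$ in $Sh^b_\Lambda$ reduces this further to $\Gamma(D'\mathscr{F} \otimes \mathscr{G})$. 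The duality pairing map becomes an isomorphism after these section functors by a microlocal argument using the $\nu \leq 0$ bound on both sides.

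The main obstacle will be verifying that the natural duality pairing $q^{-1}D'\mathscr{F} \otimes r^{-1}\mathscr{G} \to \mathcal{H}om(\mathscr{F}_q, \mathscr{G}_r)$ becomes an isomorphism after $\Gamma(\{u \geq 0\}, -)$ or $\Gamma(\{u > 0\}, -)$, since this pairing is not generically an iso of sheaves. The argument requires showing that the cone of the pairing map has singular support avoiding the ``visible'' $u$-direction, so that its relevant half-space sections vanish in $Sh^b_\Lambda$. This step uses the orientability of $M$ (to relate $D'$ to Verdier duality via a trivial shift) and the positive-cone SS constraint in an essential way, and carefully tracking the shifts and orientations is the key technical step.
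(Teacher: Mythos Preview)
Your overall strategy is sound and close to the paper's, but there is one genuine slip and one place where you overestimate the difficulty.

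\textbf{The convolution kernel is wrong.} You identify $R(qj)_* j^{-1}r^{-1}\mathscr{G}$ with $\mathscr{G} *_t \Bbbk_{(0,\infty)}$, but the open pushforward $Rj_*$ extends sections across the boundary $\{u=0\}$: in the coordinates $(x,s,u)$ with $s=t-u$, one has $Rj_*j^{-1}\pi_{12}^{-1}\mathscr{G} \simeq \pi_{12}^{-1}\mathscr{G}\otimes\Bbbk_{[0,\infty)}$, so after $Rq_*$ you obtain $\mathscr{G}\star\Bbbk_{[0,\infty)}$, not $\mathscr{G}\star\Bbbk_{(0,\infty)}$. This is not cosmetic: from the triangle $\Bbbk_{(0,\infty)}\to\Bbbk_{[0,\infty)}\to\Bbbk_{\{0\}}$ and the cutoff lemma (which says precisely that $\mathscr{G}\star\Bbbk_{[0,\infty)}\to\mathscr{G}\star\Bbbk_{\{0\}}=\mathscr{G}$ is an isomorphism) one gets $\mathscr{G}\star\Bbbk_{(0,\infty)}\simeq 0$, which would make your $Hom_+$ vanish. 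With the correct closed kernel the cutoff lemma gives $\mathscr{G}\star\Bbbk_{[0,\infty)}\simeq\mathscr{G}$ on the nose, and no appeal to ``the localized category $Sh^b_\Lambda$'' is needed. This is exactly how the paper argues (phrased there as $\varprojlim_{c>0}T_{c,*}\mathscr{G}\simeq\mathscr{G}$).

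\textbf{Your ``main obstacle'' is a non-obstacle.} Because $\Lambda_q\subset\{\nu=0\}$ and $\Lambda_r\subset\{\nu=-\tau<0\}$ are disjoint in $T^{*,\infty}$, Proposition~5.4.14 (the paper's Proposition~\ref{ssforhom}) already gives an isomorphism of \emph{sheaves}
\[
D'\mathscr{F}_q\otimes\mathscr{G}_r\;\xrightarrow{\ \sim\ }\;\mathscr{H}om(\mathscr{F}_q,\mathscr{G}_r),
\]
so there is no cone to control microlocally. The paper simply uses this identification, then applies the microlocal Morse lemma in the $u$-direction (Lemma~\ref{reebchord-hom}) to replace $\Gamma(\{u\geq 0\},-)$ by $\Gamma(\{u=0\},-)$, where the restriction $i_{u=0}^{-1}(D'\mathscr{F}_q\otimes\mathscr{G}_r)\simeq D'\mathscr{F}\otimes\mathscr{G}$ is immediate for the tensor product. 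Your projection-formula route also works and is essentially equivalent, once you know the pairing map is already an isomorphism. Finally, orientability of $M$ plays no role in this theorem; it is used only in the Sabloff duality (Theorem~\ref{duality}), where Verdier duality is invoked.
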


\subsubsection{Persistence structure}
    For more careful analysis on the differentials of the chain complexes so as to prove Theorem \ref{displaceable} and \ref{squeeze}, we will consider the extra $\mathbb{R}$-factor corresponding to the action filtration of Reeb chords. Indeed, we should not only consider numerical invariants, but construct a persistence module $\mathscr{H}om_{(-\infty,+\infty)}$ and study the persistence structure, as in \cite{PolShe,UsherZpersist,KSpersist,Zhang}, and in particular following Dimitroglou Rizell-Sullivan \cite{RizSpersist} in Floer theory and Asano-Ike \cite{AsanoIke} in sheaf theory.

\begin{definition}\label{persistmod}
    For sheaves $\mathscr{F, G} \in Sh^b(M \times \mathbb{R})$, let
    $$\mathscr{H}om_{(-\infty,+\infty)}(\mathscr{F, G}) = u_*\mathscr{H}om(\mathscr{F}_q, \mathscr{G}_r).$$
\end{definition}

    It turns out that when $\mathscr{F}$ and $\mathscr{G}$ are constructible sheaves with perfect stalks and compact supports, the sheaf $\mathscr{H}om_{(-\infty,+\infty)}(\mathscr{F, G})$ on $\mathbb{R}$ has a canonical decomposition
    $$\mathscr{H}om_{(-\infty,+\infty)}(\mathscr{F, G}) \simeq \bigoplus_{\alpha \in I}\Bbbk^{n_\alpha}_{(a_\alpha, b_\alpha]}[d_\alpha],$$
    and can be viewed as a persistence module on $\mathbb{R}$. In addition, the endpoints of the intervals $(a_\alpha, b_\alpha]$ are exactly lengths of Reeb chords.

    The difference of a family of persistence modules is measured by the interleaving distance $d$. In the setting of sheaf theory the relation between persistence distance and Hamiltonian has been studied by Asano-Ike in \cite{AsanoIke}. Here, we apply their result and get the following critical estimate. Since the Reeb flow does not affect the number of Reeb chords, we will consider a distance $\overline{d}$ invariant under the Reeb flow.

\begin{theorem}\label{persistcontinue}
    Let $\Lambda \subset T^{*,\infty}_{\tau > 0}(M \times \mathbb{R})$ be a closed Legendrian, $H$ be a Hamiltonian on $T^{*,\infty}_{\tau > 0}(M \times \mathbb{R})$ and $\Phi_H^s\,(s \in I)$ be the equivalence functor induced by the Hamiltonian. Then for $\mathscr{F, G} \in Sh^b_\Lambda(M \times \mathbb{R})$ with perfect stalks and compact supports,
    $$\overline{d}(\mathscr{H}om_{(-\infty,+\infty)}(\mathscr{F, G}), \mathscr{H}om_{(-\infty,+\infty)}(\mathscr{F}, \Phi_H^1(\mathscr{G}))) \leq \|H\|_\text{osc}^\Lambda.$$
\end{theorem}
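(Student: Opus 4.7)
\emph{Proof proposal.} The strategy is to reduce the statement to the interleaving estimate of Asano--Ike \cite{AsanoIke} in the $t$-direction, and then transport it through the functor $\mathscr{H}om_{(-\infty,+\infty)}(\mathscr{F}, -)$ to produce persistence modules on the $u$-line.

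\textbf{Step 1 (Asano--Ike bound in the $t$-direction).} The Hamiltonian $H$ on $T^{*,\infty}_{\tau > 0}(M\times \mathbb{R})$ lifts to a (homogeneous) Hamiltonian on $T^*(M\times \mathbb{R})\setminus 0$ whose Guillermou--Kashiwara--Schapira quantisation realises $\Phi_H^s$ as convolution with a family of sheaf kernels. In exactly this Tamarkin-type setting on $M \times \mathbb{R}$, Asano--Ike's main theorem supplies, for each $s \in [0,1]$ and $c_s := \int_0^s (\max H_\sigma - \min H_\sigma)\, d\sigma$, natural morphisms
$$\alpha_s: \mathscr{G} \longrightarrow (T_{c_s})_*\Phi_H^s(\mathscr{G}), \qquad \beta_s: \Phi_H^s(\mathscr{G}) \longrightarrow (T_{c_s})_*\mathscr{G},$$
with $T_c(x,t) = (x, t+c)$, satisfying the compatibility that both compositions equal the canonical shift-by-$2c_s$ morphisms. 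This is a $c_s$-interleaving of $\mathscr{G}$ and $\Phi_H^s(\mathscr{G})$ in the $t$-axis.

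\textbf{Step 2 (Pullback by $r$).} A direct stalk computation yields the functorial identity $r^{-1}(T_c)_* \cong (S_{-c})_*\,r^{-1}$, where $S_c(x,t,u) = (x,t,u+c)$: since $r(x,t,u) = (x, t-u)$, an upward $t$-shift of $\mathscr{G}$ translates into a downward $u$-shift of $\mathscr{G}_r$. Applying $r^{-1}$ to $\alpha_s, \beta_s$ therefore yields a $c_s$-interleaving of $\mathscr{G}_r$ and $\Phi_H^s(\mathscr{G})_r$ along the $u$-variable.

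\textbf{Step 3 (Internal Hom and $u$-pushforward).} Because $\mathscr{F}_q = q^{-1}\mathscr{F}$ is invariant under $u$-translation, $\mathscr{H}om(\mathscr{F}_q, -)$ commutes with $(S_c)_*$; and from $u \circ S_c = T_c^{\mathbb{R}} \circ u$ we get $u_*(S_c)_* = (T_c^{\mathbb{R}})_*\,u_*$. Chaining these, the application of $u_*\mathscr{H}om(\mathscr{F}_q, -)$ to the morphisms of Step 2 produces
$$\mathscr{H}om_{(-\infty,+\infty)}(\mathscr{F}, \mathscr{G}) \longrightarrow (T_{-c_s}^{\mathbb{R}})_*\,\mathscr{H}om_{(-\infty,+\infty)}(\mathscr{F}, \Phi_H^s(\mathscr{G})),$$
together with the reverse arrow and the matching $2c_s$-compositions. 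This is a $c_s$-interleaving of persistence sheaves on $\mathbb{R}$ (the sign of the shift is immaterial for the interleaving distance). Specialising to $s=1$ gives $c_1 = \|H\|_\text{osc}$ and proves the theorem.

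\textbf{Main obstacle.} The essential content lies in Step 1: verifying that Asano--Ike's theorem, originally formulated for compactly supported Hamiltonians on $T^*N$ in Tamarkin's category, applies verbatim to our contact Hamiltonians on $J^1(M)$. This translation is direct, since Tamarkin's reduction $T^*(M \times \mathbb{R}) \cap \{\tau = 1\} \cong T^*M$ matches the identification $J^1(M) \cong T^{*,\infty}_{\tau > 0}(M\times \mathbb{R})$, and the GKS-kernel singular-support estimate driving the interleaving shift depends only on $\int (\max H - \min H)\,ds$. Steps 2 and 3 are formal consequences of the commutation identities listed; the only minor subtlety is tracking the sign of the $u$-shift inherited from $r(x,t,u) = (x,t-u)$, which affects orientation but not the persistence distance.
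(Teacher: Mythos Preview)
Your approach is essentially the paper's: both reduce to the Asano--Ike $t$-direction interleaving bound and then transport it to a $u$-direction interleaving via the key identity $\mathscr{H}om(\mathscr{F}_q, T_{c,*}\mathscr{G}_r) \simeq U_{c,*}\mathscr{H}om(\mathscr{F}_q, \mathscr{G}_r)$, which is precisely your Steps~2--3. The paper packages this slightly differently---it first extends $H$ to a compactly supported $\widehat{H}$ on $T^{*,\infty}_{\tau>0}(M\times\mathbb{R}^2)$ so that $(\Phi_H^s\mathscr{G})_r = \Phi_{\widehat{H}}^s(\mathscr{G}_r)$ and then invokes Asano--Ike for $\mathscr{G}_r$ on $M\times\mathbb{R}^2$---whereas you apply Asano--Ike directly to $\mathscr{G}$ on $M\times\mathbb{R}$ and pull back by $r$. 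Your route is marginally more direct and avoids the cutoff construction.

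One correction to Step~1: as written, you record the interleaving symmetrically, with $\alpha_s,\beta_s$ each shifting by $c_s$ and composing to $\tau_{0,2c_s}$. Read literally this is a $(c_s,c_s)$-interleaving in the paper's sense and yields only $d\leq 2\|H\|_\text{osc}$. What Asano--Ike actually produce (see the paper's proof of their theorem) is an \emph{asymmetric} $(\epsilon,\epsilon')$-interleaving with $\epsilon\approx\int_0^1(-\min H_s)\,ds$ and $\epsilon'\approx\int_0^1(\max H_s)\,ds$, so that $\epsilon+\epsilon'=\|H\|_\text{osc}$. Once you use this asymmetric form, your Steps~2--3 (which are purely functorial and preserve any $(\epsilon,\epsilon')$-interleaving) go through unchanged and give the sharp bound. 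Your ``Main obstacle'' discussion is also a non-issue: Asano--Ike's theorem is already stated for $Sh^b_{\tau>0}(M\times\mathbb{R})$, which is exactly the setting here, so no translation from a separate Tamarkin framework is needed.
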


    Combining all these ingredients, we are able to get the results on Reeb chord estimations stated at the beginning of this section.

\begin{remark}
    At the end of the introduction, we briefly explain the relation between this paper and other results in microlocal sheaf theory. As explained in Remark \ref{tamarkin}, our construction is essentially equivalent to the approach using the Tamarkin category \cite{Tamarkin1,GS,Ike,AsanoIke,AsanoIkeimmersion}. Our main contribution in this paper may be the duality and exact triangle. While the Sato-Sabloff exact triangle Theorem \ref{exactseq} may be extracted from \cite[Section 11.3]{Guisurvey} and \cite[Section 4.3]{Ike}, it may be hard to directly find a clear statement of Theorem \ref{exactseq} there. Moreover, the result in Theorem \ref{duality} also seems to be new.

    Since the paper appeared on arXiv, a number of the main results have been improved. For instance, Sato-Sabloff exact triangle has been generalized to Legendrians $\Lambda \subset T^{*,\infty}N$ and compactly supported sheaves that do not necessarily have perfect stalks \cite{KuoLiSpherical}, and Sabloff duality has also been generalized to Legendrians $\Lambda \subset T^{*,\infty}N$ and compactly supported sheaves with perfect stalks \cite{KuoLiSpherical}. Furthermore, these results are strengthened to a strong smooth relative Calabi-Yau structure on $m_\Lambda^l \colon Loc_\Lambda(\Lambda) \to Sh_{\Lambda,+}(N)_0$, which induces a proper relative Calabi-Yau structure on the subcategories with perfect stalks $m_\Lambda\colon Sh^b_{\Lambda,+}(N)_0 \rightarrow Loc^b_\Lambda(\Lambda)$ \cite{KuoLiCY}, confirming the conjecture in Remark \ref{relativecy}.
\end{remark}

\subsection{Organization of the Paper}
    Section \ref{prelimcontact} reviews basic contact geometry, genericity conditions and gradings of Reeb chords. Section \ref{prelimsheaf} reviews basic sheaf theory, singular supports, microlocal Morse theory, microlocalization and how the sheaf category changes with respect to certain operations. In Section \ref{dualityexact} we define $Hom_\pm(-, -)$ and prove Theorem \ref{duality}, \ref{exactseq} and \ref{computehom}. In Section \ref{persist} we review basic concepts in persistence modules, Asano-Ike's results and use that to prove Theorem \ref{persistcontinue}. In Section \ref{reebchordsection} we relate Reeb chords with homomorphisms of sheaves. In particular we prove Theorem \ref{reebinequality}, and finish the proof of Theorem \ref{puresheaf}, \ref{mixedsheaf} and \ref{displaceable}. Finally in Section \ref{loosesection} we prove Theorem \ref{squeeze}.

\subsection*{Acknowledgements}
    I would like to thank my advisors Emmy Murphy and Eric Zaslow for plenty of helpful discussions and comments, in particular Emmy Murphy for suggesting the topic on the estimation of self Reeb chords and explaining to me the results in generating families and Eric Zaslow for discussion on relative Calabi-Yau functors in Remark \ref{relativecy}. I am also grateful to Vivek Shende for his online lecture notes on microlocal sheaf theory. Finally I would thank Yuichi Ike, Joshua Sabloff and the anonymous referee for helpful comments and suggestions.

\section{Preliminaries in Contact Topology}\label{prelimcontact}

\subsection{Jet Bundles and Cotangent Bundles}
    In this section we explain the contact form and Reeb vector field that we are going to work with, and in particular the contactomorphism $J^1(M) \xrightarrow{\sim} T^{*,\infty}_{\tau > 0}(M \times \mathbb{R})$. We also explain the contact Hamiltonians and their vector fields with respect to the specific contact form.

    The 1-jet bundle $J^1(M) = T^*M \times \mathbb{R}$. Consider local coordinates $(x_0, \xi_0, t_0) \in T^*M \times \mathbb{R}$, where $x_0$ is the coordinate on $M$, $\xi_0$ is the coordinate on the fiber of $T^*M$ and $t_0$ is the coordinate on $\mathbb{R}$. The contact structure given by $\ker(dt_0 - \xi_0dx_0)$. We choose the contact form to be $\alpha_0 = dt_0 - \xi_0dx_0$. Now consider
    \[\begin{array}{ccc}
    T^*_{\tau > 0}(M \times \mathbb{R}) & \rightarrow & J^1(M), \\
    (x, \xi, t, \tau) & \mapsto & (x, \xi/\tau, t).
    \end{array}\]
    After taking the quotient of $T^*_{\tau > 0}(M \times \mathbb{R})$ by the dilation $(x, \xi, t, \tau) \mapsto (x, a\xi, t, a\tau)$ by $a \in \mathbb{R}_{>0}$, we get a diffeomorphism
    $$T^{*,\infty}_{\tau > 0}(M \times \mathbb{R}) \xrightarrow{\sim} J^1(M)$$
    where $T^{*,\infty}_{\tau > 0}(M \times \mathbb{R}) = \{(x, \xi, t, \tau)\mid |\xi|^2 + |\tau|^2 = 1, \tau > 0\} \cong T^*_{\tau > 0}(M \times \mathbb{R})/\mathbb{R}_{>0}$. (If you consider the standard Liouville flow on $T^*(M \times \mathbb{R})$ and think of contact manifolds in the way that each contact form corresponds to a specific choice of a hypersurface transverse to the Liouville vector field, maybe it's better think of $T^{*,\infty}_{\tau > 0}(M \times \mathbb{R})$ as $\{(x, \xi, t, \tau)\mid \tau \equiv 1\}$.) There is a natural contact structure on $T^{*,\infty}_{\tau > 0}(M \times \mathbb{R})$ given by restriction of the symplectic structure on $T^{*}(M \times \mathbb{R})$
    $$\ker(\tau dt - \xi dx).$$
    Then one can check that $T^{*,\infty}_{\tau > 0}(M \times \mathbb{R})$ and $J^1(M)$ are contactomorphic through that map defined above.

    Under the contactomorphism, the contact form $\alpha_0 = dt_0 - \xi_0dx_0$ is mapped to
    $$\alpha = dt - (\xi/\tau)dx,$$
    and the Reeb vector field $R_{\alpha_0} = \partial/\partial t_0$ is mapped to
    $$R_\alpha = \frac{\partial}{\partial t}.$$
    This contact form and Reeb vector field are the ones we will be dealing with in the paper.

\begin{remark}
    In the cotangent bundle $T^{*,\infty}(M \times \mathbb{R})$, the Reeb vector field that people are more familiar with may be the vector field producing the geodesic flow. The Reeb vector field we work with here is different because the contact form $\alpha = dt - (\xi/\tau)dx$ is different from the canonical one $\tau dt - \xi dx$. Indeed the contactomorphism we write down does not preserve the canonical contact forms on both sides.
\end{remark}

    Now we consider the correspondence between contact Hamiltonians and contact vector fields determined by this contact form $\alpha = dt - (\xi/\tau)dx$. Given $H \in C^\infty(T^{*,\infty}_{\tau > 0}(M \times \mathbb{R}))$, the corresponding contact vector field $X_H$ is defined by \cite{Geiges}
    $$H = \alpha(X_H), \,\,\, \iota(X_H)d\alpha = dH(R_\alpha) \alpha - dH.$$
    We claim that this contact Hamiltonian can be lifted to a homogeneous symplectic Hamiltonian on $T^*_{\tau > 0}(M \times \mathbb{R})$ in the following way. Let
    $$\widehat H(x, \xi, t, \tau) = \tau H(x, \xi/\tau, t).$$
    Its corresponding symplectic Hamiltonian vector field is defined by
    $$\iota(X_{\widehat H})\omega = -d\widehat H,$$
    where $\omega = d(\tau dt - \xi dx) = d(\tau \alpha)$. By elementary calculation, one will find that the projection $X_{\widehat H}$ onto the hyperplane $\tau = 1$ is $X_H$. Therefore, we will just study the homogeneous Hamiltonian $\widehat H$ (since in microlocal sheaf theory this will be more natural). In particular, one can define the movie of a subset $\widehat \Lambda \subset T^*_{\tau > 0}(M \times \mathbb{R})$ under the Hamiltonian isotopy $\varphi_{\widehat H}^s\,(s\in I)$ as
    $$\widehat \Lambda_H = \{(x, \xi, t, \tau, s, \sigma) \mid (x, \xi, t, \tau) = \varphi_{\hat H}^s(x_0, \xi_0, t_0, \tau_0), \sigma = -\widehat H \circ \varphi_{\widehat H}^s(x_0, \xi_0/\tau_0, t_0)\}.$$
    This is an exact conical Lagrangian submanifold in $T^*_{\tau > 0}(M \times \mathbb{R} \times I)$.

\subsection{Genericity Assumptions}
    In this section we introduce the notions of chord generic Legendrian submanifolds and admissible Legendrian isotopies. They are generic under $C^1$-topology in the space of embeddings/isotopies.

\begin{definition}
    Let $\Lambda \subset J^1(M)$ be a Legendrian submanifold. $\Lambda$ is called chord generic if the Lagrangian projection
    $$\pi_\text{Lag}: \Lambda \rightarrow T^*M$$
    is a Lagrangian immersion with only transverse double points.
\end{definition}
\begin{lemma}[Ekholm-Etnyre-Sullivan, \cite{EESR2n+1}*{Lemma 3.5}]
    Let $\Lambda$ be a Legendrian submanifold. Then for any $\epsilon > 0$ there is a chord generic Legendrian submanifold $\Lambda_\epsilon$ that is $\epsilon$-close to $\Lambda$ in the $C^1$-topology.
\end{lemma}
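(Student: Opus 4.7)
\emph{Proof plan.} The plan is to split the statement into two parts. First I would observe that the Lagrangian projection of any Legendrian is automatically an immersion, so what must be achieved by a small perturbation is only transversality of the self-intersections of $\pi_\mathrm{Lag}(\Lambda)$. Then I would perturb $\Lambda$ through contact Hamiltonian isotopies and apply parametric (Sard--Smale) transversality.

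For the immersion step, suppose $v \in T_p\Lambda$ satisfies $d\pi_\mathrm{Lag}(v) = 0$. Since $\Lambda$ is Legendrian, $v \in \xi_p = \ker\alpha$, and $d\pi_\mathrm{Lag}$ annihilates exactly the vertical direction $\partial_t = R_\alpha$. Because the Reeb vector field is transverse to $\xi$, this forces $v = 0$, so $\pi_\mathrm{Lag}|_\Lambda$ is an immersion; for closed $\Lambda$, the double point locus is then compact.

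For the transversality step, let $\mathcal{H}$ be the Fréchet space of compactly supported smooth time-dependent contact Hamiltonians on $J^1(M)$; the time-one flow $\varphi_H^1$ depends smoothly on $H$ and $\varphi_H^1(\Lambda)$ is $O(\|H\|_{C^1})$-close to $\Lambda$ in $C^1$. Consider the universal evaluation
\[
\mathrm{ev} : (\Lambda \times \Lambda \setminus \Delta_\Lambda) \times \mathcal{H} \longrightarrow T^*M \times T^*M,\quad (p, q, H) \longmapsto \bigl(\pi_\mathrm{Lag}\varphi_H^1(p),\, \pi_\mathrm{Lag}\varphi_H^1(q)\bigr).
\]
The key submersivity claim is that the $H$-derivative surjects onto $T(T^*M) \oplus T(T^*M)$ at every $(p, q, H_0)$. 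Since $\varphi_{H_0}^1$ is a diffeomorphism and $p \neq q$, the points $\varphi_{H_0}^1(p)$ and $\varphi_{H_0}^1(q)$ are distinct in $J^1(M)$; pick disjoint neighbourhoods of them and choose variations $\dot H$ supported near time $s=1$ in either neighbourhood. Using the pointwise isomorphism $C^\infty(J^1(M)) \cong \Gamma(TJ^1(M)/\xi)$ together with the immersion step, arbitrary independent tangent vectors in $T(T^*M) \oplus T(T^*M)$ are realized. Hence $\mathrm{ev}$ is transverse to $\Delta_{T^*M}$, and Sard--Smale produces a $C^1$-residual set of $H$ for which the slice $\mathrm{ev}_H$ is transverse to $\Delta_{T^*M}$; since $\dim(\Lambda \times \Lambda) = 2n = \mathrm{codim}\,\Delta_{T^*M}$, this is exactly chord genericity of $\varphi_H^1(\Lambda)$. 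Taking $\|H\|_{C^1}$ sufficiently small yields $\Lambda_\epsilon = \varphi_H^1(\Lambda)$.

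The main obstacle to watch is the submersivity step in the degenerate case where $(p, q)$ already lies on a Reeb chord of $\Lambda$: then $\pi_\mathrm{Lag}(p) = \pi_\mathrm{Lag}(q)$ in $T^*M$, so one might worry that a perturbation cannot separate their images. However, $p$ and $q$ remain distinct in $J^1(M)$ itself, and the disjoint-neighbourhood construction is carried out upstairs in $J^1(M)$, not downstairs in $T^*M$, so the two evaluated points continue to admit independent variations. This is the only place where the extra $t$-coordinate of $J^1(M)$ plays a role, and it is exactly what makes the transversality argument uniform across chord and non-chord pairs.
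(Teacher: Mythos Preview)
The paper does not prove this lemma: it is quoted verbatim from Ekholm--Etnyre--Sullivan \cite{EESR2n+1}*{Lemma 3.5} and used as a black box, so there is no proof in the paper to compare against. Your sketch is a standard and essentially correct route to this kind of genericity statement; the immersion step is fine, and the parametric transversality step is the right idea. One technical point you glossed over: Sard--Smale needs a Banach manifold of perturbations, not the Fr\'echet space of smooth Hamiltonians, so you would first run the argument with $C^k$ Hamiltonians for large finite $k$ (or use the Taubes/Floer trick of a separable Banach subspace dense in $C^\infty$) and then approximate. This is routine and does not affect the conclusion.
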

\begin{remark}
    In fact, being $\epsilon$-close in the $C^1$-topology implies that $\Lambda$ is Hamiltonian isotopic to $\Lambda_\epsilon$ by the Legendrian neighbourhood theorem. In addition the $C^0$-norm of the Hamiltonian isotopy can also be smaller than $\epsilon$.
\end{remark}

    By Legendrian isotopy extension theorem, any Legendrian isotopy can be realized as an ambient Hamiltonian isotopy.

\begin{definition}
    Let $n \geq 2$, $\Lambda \subset J^1(M)$ be a Legendrian submanifold and $H \in C^\infty(J^1(M))$ a contact Hamiltonian. Then the Legendrian isotopy $\Lambda_s = \varphi_H^s(\Lambda) \,(s \in I)$ is admissible if there are $s_1,\dots, s_k \in I$ such that
    \begin{enumerate}
    \item for $s \neq s_1, \dots, s_k$, $\Lambda_s$ is a chord generic Legendrian;
    \item for $s \in (s_i-\epsilon, s_i+\epsilon)$ where $\epsilon>0$ is sufficiently small, $\Lambda_s$ is still chord generic away from some contact ball $U \in J^1(M)$, and in the contact ball $U \simeq \mathbb{R}^{2n+1}$,
    $$\Lambda_t \cap U \simeq \left(\{(x,0,0) \mid x \in \mathbb{R}\} \times L_1\right) \cup \left(\{(x,3x^2+s,x^3+sx)\mid x \in \mathbb{R}\} \times L_2\right)$$
    such that $L_1 \pitchfork L_2$ are transverse Lagrangian subspaces in $\mathbb{R}^{2n-2}$.
    \end{enumerate}
\end{definition}
\begin{lemma}[Ekholm-Etnyre-Sullivan \cite{EESR2n+1}*{Lemma 3.6}]\label{admissible}
    Let $\Lambda_s\,(s\in I)$ be a Legendrian isotopy consisting of chord generic Legendrians connecting $\Lambda_1$ and $\Lambda_1$. Then for any $\epsilon > 0$ there exists an admissible Legendrian isotopy connecting $\Lambda_0$ and $\Lambda_1$ that is $\epsilon$-close to $\Lambda_s\,(s\in I)$ in the $C^1$-topology.
\end{lemma}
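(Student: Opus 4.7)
The plan is to obtain the admissible perturbation by a standard Thom transversality argument applied to one-parameter families of Legendrian embeddings. Fix a smooth manifold $L$ diffeomorphic to $\Lambda_0$, and let $\mathcal{E}$ denote the space of smooth families $F : I \times L \to J^1(M)$ such that each $F_s := F(s,\cdot)$ is a Legendrian embedding, equipped with the $C^1$-topology. The goal is to show that admissibility is residual in $\mathcal{E}$ in any $C^1$-neighbourhood of the given family $(\Lambda_s)_{s \in I}$, which immediately yields an admissible $\epsilon$-close approximation.

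The first step is to set up the double-point evaluation for the Lagrangian projection. Define
$$\Phi : I \times (L \times L \setminus \Delta_L) \to T^*M \times T^*M, \qquad (s,x,y) \mapsto \bigl(\pi_{\text{Lag}} F_s(x),\, \pi_{\text{Lag}} F_s(y)\bigr),$$
so that Reeb chords of $\Lambda_s$ correspond to $\Phi^{-1}(\Delta_{T^*M}) \cap (\{s\} \times \cdots)$. Chord genericity of $\Lambda_s$ amounts to transversality of $\Phi_s$ to $\Delta_{T^*M}$; its failure is exactly tangential double points. A jet-transversality argument, applied on the $1$-jets of the family, shows that for $F$ in an open dense subset of $\mathcal{E}$ the preimage $\Phi^{-1}(\Delta_{T^*M})$ is a smooth $1$-manifold whose projection to $I$ has only fold-type critical points; simultaneously one excludes triple coincidences and higher multiplicities by codimension counting. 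The set of such ``good'' times forms a discrete subset $\{s_1,\dots,s_k\} \subset I$ at which the Lagrangian projection acquires a single non-transverse double point.

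The second step is the normal form at a critical time $s_i$. After invoking the Legendrian neighbourhood theorem and using Darboux coordinates centred at the degenerating double point $p$, one can reduce to $J^1(\mathbb{R}^n)$ where the two sheets of $\pi_{\text{Lag}} F_{s_i}(L)$ meet tangentially at the origin. Because the one-parameter family is generic, the unfolding of the tangency in the front projection $M \times \mathbb{R}$ is versal, and the classification of generic codimension-one Legendrian front singularities forces the $A_2$ (cusp) normal form. A further $C^1$-small, compactly supported perturbation then straightens the model into
$$\{(x, 0, 0) \mid x \in \mathbb{R}\} \times L_1 \,\cup\, \{(x,\,3x^2+s,\,x^3+sx) \mid x \in \mathbb{R}\} \times L_2,$$
with $L_1 \pitchfork L_2$ transverse Lagrangian subspaces in the remaining $\mathbb{R}^{2n-2}$, which is precisely the condition in the definition of admissibility.

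The main obstacle is the normal form step: one must verify that the generic codimension-one degeneration really is modelled by the miniversal unfolding of the $A_2$ singularity, and that the straightening can be carried out by a $C^1$-small, compactly supported perturbation so as not to disturb the chord genericity already achieved at other times. This is handled by a parametric Moser-type argument inside a small contact ball around each $p_i$, together with a time-dependent cutoff to interpolate between the straightened local model and the original family. Finally, one patches these local modifications near each $s_i$ with a global $C^1$-small perturbation making $\Lambda_s$ chord generic away from the $s_i$; since the modifications are disjointly supported in $I$ and in $J^1(M)$, their sum remains $\epsilon$-close to $(\Lambda_s)$ in the $C^1$-topology, yielding the desired admissible isotopy.
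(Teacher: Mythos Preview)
The paper does not supply its own proof of this lemma; it is stated purely as a citation of \cite{EESR2n+1}*{Lemma 3.6} and left unproved, so there is nothing in the paper to compare your argument against. Your outline via Thom jet transversality for one-parameter families followed by the $A_2$ birth/death normal form is the standard route and is in line with the argument in the cited Ekholm--Etnyre--Sullivan reference.
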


\begin{remark}
    Ekholm-Etnyre-Sullivan's definition for admissible Legendrian isotopies requires more conditions, but for our purpose the definition above is already enough.
\end{remark}

\subsection{Grading of Reeb chords}\label{grade}
    In this section we discuss the grading of Reeb chords and Maslov potential.

    Recall that the symplectic structure on $T^*M$ will give a contractible choice of almost complex structures on the tangent bundle $T(T^*M)$, which canonically turns $T(T^*M)$ into a complex vector bundle. On $T^*M$ there is a canonical Lagrangian fibration given by the cotangent fibers. A framing on this Lagrangian fibration together with the almost complex structure $J$ determines a canonical trivialization of the complex vector bundle $T(T^*M)$.

\begin{definition}
    Let $\Lambda \rightarrow J^1(M)$ be a Legendrian immersion, and consider the Lagrangian projection onto $T^*M$. For any $\gamma \colon S^1 \hookrightarrow \Lambda \rightarrow T^*M$, consider the canonically trivialized complex vector bundle $\gamma^*T(T^*M)$ and the Lagrangian subbundle $\gamma^*T\Lambda$. Then the Maslov index of $\gamma$ is
    $$m(\gamma)\colon \mathbb{Z} \xrightarrow{\sim} \pi_1(S^1) \rightarrow \pi_1(U(n)/O(n)) \xrightarrow{\sim} \mathbb{Z}.$$
    Equivalently, we can regard $m(\gamma)$ as in $\mathbb{Z}$. The Maslov class of $\Lambda$ is the homomorphism
    $$\mu(\Lambda)\colon \pi_1(\Lambda) \rightarrow \mathbb{Z}, \, \gamma \mapsto m(\gamma).$$
    In fact $\mu(\Lambda) \in H^1(\Lambda)$.
\end{definition}

    Now we define the Maslov potential for a Legendrian submanifold $\Lambda$ with $\mu(\Lambda) = 0$. Currently Maslov potential is only defined combinatorially for Legendrian knots, since in higher dimensions it is hard (in fact, impossible) to classify the singularities of the front projection. Therefore here we only define the Maslov potential on a strand.

\begin{definition}
    Let $\Lambda \subset J^1(M)$ be a Legendrian submanifold such that the front projection $\pi_\text{front}\colon \Lambda \rightarrow M \times \mathbb{R}$ is a smooth hypersurface on an open dense subset. For a curve $\gamma\colon I \rightarrow \Lambda$, a Maslov potential is a step function
    $$d\colon \gamma(I) \rightarrow \mathbb{Z}$$
    such that for any $a, b \in \gamma(I)$, $d(a) - d(b)$ equals the number of down cusps minus the number of up cusps, and the value at a cusp equals those of points in a small neighbourhood of $\gamma(I)$ with greater $t$ coordinates. Here a cusp is going up (down) if $\gamma^*dt > 0$ ($\gamma^*dt < 0$).
\end{definition}
\begin{remark}
    It is not clear at all that the Maslov potential can be globally well-defined. However, when $\mu(\Lambda) = 0$ there is indeed a well-defined Maslov potential
    $$d\colon \Lambda \rightarrow \mathbb{Z}$$
    such that its restriction to any curve will be a Maslov potential on that strand. For a possible choice of the Maslov potential, see \cite{Gui}.
\end{remark}

    The following definition is coming from the formula obtained by Ekholm-Etnyre-Sullivan \cite[Section 3.5]{EESNoniso}. It may not be a good definition from a geometric viewpoint. However it is the most convenient one for us.

\begin{definition}
    Let $\Lambda \subset J^1(M)$ be a chord generic Legendrian submanifold, $\gamma$ be a Reeb chord on $\Lambda$ starting from $a$ and ending at $b$, and $d$ be a Maslov potential on any strand on $\Lambda$ connecting $a$ and $b$. Let $h_a, h_b$ the functions $\mathbb{R}^n \rightarrow \mathbb{R}$ be functions such that in small contact balls $U_a, U_b$ around $a$ and $b$,
    $$\Lambda \cap U_j = \{(x, dh_j(x), h_j(x)) \mid x \in \mathbb{R}\}.$$
    Let $h_{ab}(x) = h_b(x) - h_a(x)$. Then we define the degree $\deg(\gamma)$ by the following equation
    $$n - \deg(\gamma) = d(a) - d(b) + \mathrm{ind}(D^2h_{ab}) - 1.$$
\end{definition}
\begin{lemma}[Ekholm-Etnyre-Sullivan, \cite{EESNoniso}*{Lemma 3.4}]
    Let $\Lambda \subset J^1(M)$ be a chord generic Legendrian submanifold with $\mu(\Lambda) = 0$, $\gamma$ be a Reeb chord on $\Lambda$ starting from $a$ and ending at $b$. Then $\deg(\gamma)$ is independent of the strand on $\Lambda$ and the Maslov potential $d$.
\end{lemma}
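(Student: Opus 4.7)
The plan is to separate the degree formula into two pieces and check invariance of each. Write
\[
n - \deg(\gamma) = \bigl(d(a) - d(b)\bigr) + \bigl(\mathrm{ind}(D^2 h_{ab}) - 1\bigr).
\]
The second summand depends only on the germs of $\Lambda$ near the endpoints $a, b$ (and on the endpoint labelling $a, b$ given by the Reeb chord $\gamma$), so it is manifestly independent of both the strand and the choice of Maslov potential. Therefore the statement reduces to showing that $d(a) - d(b)$ is independent of these choices.

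For independence of the Maslov potential on a fixed strand $\gamma \colon I \to \Lambda$, note that any two Maslov potentials $d, d'$ along $\gamma(I)$ have exactly the same prescribed jumps at cusps (down cusps contribute $+1$, up cusps $-1$). Consequently $d - d'$ is constant on $\gamma(I)$, and $(d-d')(a) - (d-d')(b) = 0$. Thus $d(a) - d(b)$ depends only on the strand.

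For independence of the strand I would use the vanishing of the Maslov class. Because $\mu(\Lambda) = 0$, there exists a globally well-defined Maslov potential $\tilde d \colon \Lambda \to \mathbb{Z}$ whose restriction to any strand is a valid Maslov potential (as recalled in the remark preceding the definition). By the previous paragraph, the value of $d(a) - d(b)$ computed on any strand coincides with $\tilde d(a) - \tilde d(b)$, which is intrinsic. Equivalently, given two strands $\gamma_1, \gamma_2$ from $a$ to $b$, the concatenation $\gamma_1 \cdot \gamma_2^{-1}$ is a loop $\sigma$ in $\Lambda$, and the classical identification of the signed count of front-projection cusps along $\sigma$ with the Maslov index $m(\sigma) = \mu(\Lambda)([\sigma])$ gives
\[
\bigl(d(a) - d(b)\bigr)_{\gamma_1} - \bigl(d(a) - d(b)\bigr)_{\gamma_2} = \#(\text{down cusps of }\sigma) - \#(\text{up cusps of }\sigma) = m(\sigma) = 0.
\]

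The only nontrivial step is the identification of the signed cusp count around a loop with its Maslov index; this is a standard computation (comparing the trivialization of $T(T^*M)$ by the cotangent fibers with the Lagrangian $T\Lambda$, whose rotation relative to the fibers jumps by $\pm 1$ at each cusp), but one has to be careful about sign conventions so that the jump rule for $d$ in the definition matches the convention used in $m(\sigma)$. Once that bookkeeping is pinned down, the lemma follows immediately.
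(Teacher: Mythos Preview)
Your argument is correct. The paper does not actually prove this lemma; it is stated as a cited result from Ekholm--Etnyre--Sullivan, and the only justification the paper offers is the one-line remark immediately following: the degree is well-defined because it equals a shifted Conley--Zehnder index of $\gamma$, which is an intrinsic invariant of the chord. So the paper's ``proof'' is conceptual (identify the formula with something already known to be well-defined) rather than a direct verification.

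Your route is genuinely different and more elementary: you work directly with the cusp-counting definition of the Maslov potential and reduce everything to the identity (signed cusp count around a loop) $=$ (Maslov index of the loop), together with $\mu(\Lambda)=0$. This avoids any mention of Conley--Zehnder indices and stays entirely within the front-projection picture. The trade-off is exactly the one you flag: you must pin down the sign convention matching cusps to Maslov index, whereas the Conley--Zehnder approach packages that bookkeeping into a standard invariant. One small redundancy: in the paper's definition of Maslov potential on a strand, the difference $d(a)-d(b)$ is \emph{defined} to be the signed cusp count between $a$ and $b$, so independence of $d$ on a fixed strand is immediate from the definition rather than requiring the ``same jumps, hence differ by a constant'' argument.
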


    Basically, the degree $\deg(\gamma)$ is well-defined because it is equal to a shifted Conley-Zehnder index of $\gamma$. We won't discuss Conley-Zehnder indices here. Interested readers may refer to \cite[Section 2.3]{EESNoniso} or \cite[Section 2.2]{EESR2n+1}.

\section{Preliminaries in Sheaf Theory}\label{prelimsheaf}

\subsection{Singular Supports}
    We briefly review results in microlocal sheaf theory that we are going to use in this paper. For the theory of category of sheaves with unbounded cohomologies, we will refer to \cite{Unbound}.

\begin{definition}
    Let $\underline{Sh}(M)$ be the unbounded dg category of sheaves over $\Bbbk$, that consists of complexes of sheaves over $\Bbbk$. Then we let $Sh(M)$ be the dg localization of $\underline{Sh}(M)$ along all acyclic objects.
\end{definition}

\begin{example}
    We denote by $\Bbbk_M$ the constant sheaf on $M$. For a locally closed subset $i_V: V \hookrightarrow M$, abusing notations, we will write
    $$\Bbbk_V = i_{V!}\Bbbk_V \in Sh(M).$$
    In particular, $\Bbbk_V \in Sh(M)$ will have stalk $\Bbbk$ for $x \in V$ and stalk $0$ for $x \notin V$. Note that when $V \hookrightarrow M$ is a closed subset, $\Bbbk_V = i_{V*}\Bbbk_V$.
\end{example}

    We now define the notion of singular supports. For the theory of singular supports for sheaves with unbounded cohomologies, one may refer to \cite{MicrolocalInfty} or \cite[Section 2]{JinTreu}.

\begin{definition}
    Let $\mathscr{F} \in Sh(M)$. Then its singular support $SS(\mathscr{F})$ is the closure of the set of points $(x, \xi) \in T^*M$ such that there exists a smooth function $\varphi \in C^1(M)$, $\varphi(x) = 0, d\varphi(x) = \xi$ and
    $$\Gamma_{\varphi\geq 0}(\mathscr{F})_x \coloneqq \Gamma_{\varphi^{-1}([0,+\infty))}(\mathscr{F})_x \neq 0.$$
    The singular support at infinity is $SS^\infty(\mathscr{F}) = SS(\mathscr{F}) \cap T^{*,\infty}M$.

    For $\widehat \Lambda \subset T^*M$ any conical subset (resp.~$\Lambda \subset T^{*,\infty}M$ any subset), let $Sh_{\widehat\Lambda}(M) \subset Sh(M)$ (resp.~$Sh_\Lambda(M) \subset Sh(M)$) be the subcategory of sheaves such that $SS(\mathscr{F}) \subset \widehat\Lambda$ (resp.~$SS^\infty(\mathscr{F}) \subset \Lambda$).
\end{definition}

\begin{example}
    Let $\mathscr{F} = \Bbbk_{\mathbb{R}^n \times [0,+\infty)}$. Then $SS(\mathscr{F}) = \mathbb{R}^n \times \{(x, \xi) \mid x \geq 0, \xi = 0 \text{ or } x = 0, \xi \geq 0\}$, $SS^\infty(\mathscr{F}) = \nu^{*,\infty}_{\mathbb{R}^n \times \mathbb{R}_{>0},-}\mathbb{R}^{n+1} = \{(x_1, \dots, x_n, 0, 0,\dots, 0, 1)\}$, which is the inward conormal bundle of $\mathbb{R}^n \times \mathbb{R}_{>0}$.

    Let $\mathscr{F} = \Bbbk_{\mathbb{R}^n \times (0,+\infty)}$. Then $SS(\mathscr{F}) = \mathbb{R}^n \times \{(x, \xi) \mid x \geq 0, \xi = 0 \text{ or } x = 0,\, \xi \leq 0\}$, $SS^\infty(\mathscr{F}) = \nu^{*,\infty}_{\mathbb{R}^n \times \mathbb{R}_{>0},+}\mathbb{R}^{n+1} = \{(x_1, \dots, x_n, 0, 0,\dots, 0, -1)\}$, which is the outward conormal bundle of $\mathbb{R}^n \times \mathbb{R}_{>0}$.
\end{example}

    Kashiwara-Schapira proved that the singular support is always a closed coisotropic conical subset in $T^*M$. When the singular support of a sheaf is a subanalytic Lagrangian subset and has perfect stalk, it is called a constructible sheaf \cite[Definition 8.4.3]{KS}. A sheaf being constructible implies that it is also cohomologically constructible \cite[Definition 3.4.1]{KS}.

\begin{definition}
    Let $Sh^b_c(M) \subset Sh(M)$ be the dg derived category of constructible sheaves on $M$ consisting of sheaves with subanalytic Lagrangian singular support and perfect stalks. For $\widehat\Lambda \subset T^*M$ a conic subanalytic Lagrangian (resp.~$\Lambda \subset T^{*,\infty}M$ a subanalytic Legendrian), we let $Sh^b_{\widehat\Lambda}(M) = Sh^b_c(M) \cap Sh_{\widehat\Lambda}(M)$ (resp.~$Sh^b_\Lambda(M) \subset Sh^b_c(M) \cap Sh_\Lambda(M)$).
\end{definition}

    We define the linear dual and Verdier dual of a sheaf. Recall that for $p: M \rightarrow \{*\}$, the dualizing sheaf of $M$ is $\omega_M = p^!\Bbbk$. When $M$ is orientable with dimension $n$, $\omega_M = \Bbbk_M[n]$. For the detailed discussion, see Kashiwara-Schapira \cite[Section 3.3]{KS}.

\begin{definition}
    Let $\mathscr{F} \in Sh(M)$. The linear dual $D'\mathscr{F}$ and Verdier duality $D\mathscr{F}$ of $\mathscr{F}$ are defined by
    $$D'\mathscr{F} = \mathscr{H}om(\mathscr{F}, \Bbbk_M), \,\,\, D\mathscr{F} = \mathscr{H}om(\mathscr{F}, \omega_M).$$
\end{definition}

\begin{proposition}[\cite{KS}*{Proposition 3.4.6}]\label{construct}
    Let $\mathscr{F, G} \in Sh^b_c(M)$ be constructible. Then
    $$\mathscr{H}om(\mathscr{F, G}) \simeq D(D\mathscr{G} \otimes \mathscr{F}).$$
\end{proposition}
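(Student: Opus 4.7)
The plan is to prove this by moving $D\mathscr{G}$ across the tensor-hom adjunction and then invoking biduality for (cohomologically) constructible sheaves. The statement should really be viewed as the ``internal'' or sheafified version of the standard identity $\mathrm{Hom}(\mathscr{F},\mathscr{G}) \simeq \mathrm{Hom}(D\mathscr{G}\otimes\mathscr{F},\omega_M)$ available in Grothendieck--Verdier duality.

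First I would unfold the definition of the Verdier dual to rewrite the right-hand side as
\[
D(D\mathscr{G}\otimes\mathscr{F}) \;=\; \mathscr{H}om(D\mathscr{G}\otimes\mathscr{F},\,\omega_M).
\]
Next I would apply the internal tensor--hom adjunction in $Sh^b(M)$, which in its derived form reads $\mathscr{H}om(A\otimes B,C)\simeq \mathscr{H}om(A,\mathscr{H}om(B,C))$; this holds with no hypothesis on $A,B,C$ once everything is interpreted in the derived sense. Applying it with $A=\mathscr{F}$, $B=D\mathscr{G}$, $C=\omega_M$ yields
\[
\mathscr{H}om(D\mathscr{G}\otimes \mathscr{F},\omega_M)\;\simeq\;\mathscr{H}om(\mathscr{F},\,\mathscr{H}om(D\mathscr{G},\omega_M))\;=\;\mathscr{H}om(\mathscr{F},\,DD\mathscr{G}).
\]

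At this point the only thing to verify is that the canonical biduality morphism $\mathscr{G}\to DD\mathscr{G}$ is an isomorphism. This is where constructibility enters: for cohomologically constructible sheaves biduality holds (Kashiwara--Schapira, \cite[Prop.~3.4.3]{KS}), and a constructible sheaf on a real analytic manifold with perfect stalks is in particular cohomologically constructible (\cite[Prop.~8.4.7]{KS}). Composing with the isomorphism from the adjunction then gives $\mathscr{H}om(\mathscr{F},\mathscr{G})\simeq D(D\mathscr{G}\otimes\mathscr{F})$.

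The main obstacle is biduality itself; once granted, the rest is a formal manipulation of adjoint pairs. To make the argument fully self-contained one would have to reproduce the verification that biduality holds stalkwise for constructible sheaves, which reduces (via a subanalytic stratification making $\mathscr{G}$ locally constant on strata) to the case of a constant sheaf on a contractible stratum, where $DD\Bbbk_U\simeq \Bbbk_U$ follows from the behaviour of $\omega$ under open and closed inclusions. It is worth noting that constructibility of $\mathscr{F}$ is not actually needed for the argument above; only $\mathscr{G}$ needs to be cohomologically constructible for biduality to apply.
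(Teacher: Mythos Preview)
Your argument is correct: unfolding $D$, applying the internal tensor--hom adjunction, and then invoking biduality $\mathscr{G}\xrightarrow{\sim} DD\mathscr{G}$ for cohomologically constructible $\mathscr{G}$ is exactly the standard route, and your remark that only $\mathscr{G}$ needs to be constructible is accurate.

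Note, however, that the paper does not give its own proof of this proposition; it simply cites it as \cite[Proposition~3.4.6]{KS}. So there is nothing to compare against beyond observing that your argument is the same one Kashiwara--Schapira give (adjunction followed by biduality, the latter being their Proposition~3.4.3).
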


    We introduce the notion of a convolution and state the microlocal cut-off lemma.

\begin{definition}
    Let $V$ be an $\mathbb{R}$-vector space. Let
    \begin{gather*}
    \pi_1\colon V \times V \rightarrow V, (v_1, v_2) \mapsto v_1, \,\,\, \pi_2\colon V \times V \rightarrow V, (v_1, v_2) \mapsto v_2, \\
     s\colon V \times V \rightarrow V, (v_1, v_2) \mapsto v_1+v_2.
    \end{gather*}
    For $\mathscr{F, G} \in Sh(V)$, define the convolution as
    \[\begin{split}
    \mathscr{F} \star \mathscr{G} &= s_*(\pi_1^{-1}\mathscr{F} \otimes \pi_2^{-1}\mathscr{G}), \\
    \mathscr{F} \star' \mathscr{G} &= s_!(\pi_1^{-1}\mathscr{F} \otimes \pi_2^{-1}\mathscr{G}).
    \end{split}\]
\end{definition}

    Let $V$ be an $\mathbb{R}$-vector space and $\gamma \subset V$ be a closed cone, meaning that $\gamma$ is invariant under $\mathbb{R}_{>0}$-dilation. Then the polar set of $\gamma$ is
    $$\gamma^\vee = \{u \in V^\vee \mid \left<u, v \right> \geq 0, \, \forall\, v\in \gamma\}.$$
    For a subset $A \subset M$, the interior of $A$ is denoted by $A^\circ$.

\begin{lemma}[microlocal cut-off lemma \cite{KS}*{Proposition 5.2.3}, \cite{Gui}*{Proposition 2.9}]\label{cutoff}
    Let $V$ be an $\mathbb{R}$-vector space, $\gamma \subset V$ be a closed cone and $\mathscr{F} \in Sh^b(V)$. Then $SS(\mathscr{F}) \subset V \times \gamma^\vee$ iff
    $$\Bbbk_\gamma \star \mathscr{F} \xrightarrow{\sim} \Bbbk_0 \star \mathscr{F}.$$
\end{lemma}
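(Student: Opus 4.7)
The plan is to prove both implications by combining the singular-support estimate for convolutions with the standard non-characteristic propagation theorem. The initial observation is that, since $\{0\}$ is closed in the closed cone $\gamma$, the inclusion gives a canonical morphism $\Bbbk_\gamma \to \Bbbk_{\{0\}}$, and convolving with $\mathscr{F}$ produces the natural map $\Bbbk_\gamma \star \mathscr{F} \to \Bbbk_0 \star \mathscr{F} \simeq \mathscr{F}$; the lemma asserts that this map is an isomorphism precisely when $\mathscr{F}$ is microlocally constrained to $V \times (\gamma^\vee)^\circ$.

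For the reverse direction, I would first compute $SS(\Bbbk_\gamma)$ directly from the definition: at the vertex $0$ the outer conormal directions to the convex cone $\gamma$ fill out (up to sign) the polar cone, and at other points of $\gamma$ they trace the conormal to the face through that point. Inserting this into the standard convolution bound
$$SS(\mathscr{F}_1 \star \mathscr{F}_2) \subset \bigl\{(v_1+v_2,\xi) : (v_1,\xi)\in SS(\mathscr{F}_1),\ (v_2,\xi)\in SS(\mathscr{F}_2)\bigr\}$$
with $\mathscr{F}_1 = \Bbbk_\gamma$, and using the hypothesis $\Bbbk_\gamma \star \mathscr{F} \simeq \mathscr{F}$, forces the covector component of $SS(\mathscr{F})$ to lie in $\gamma^\vee$. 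To upgrade from the closed polar to its interior $(\gamma^\vee)^\circ$, I would apply the same argument to slightly enlarged cones $\gamma' \supset \gamma$ whose polars exhaust $(\gamma^\vee)^\circ$ from inside, deducing $\Bbbk_{\gamma'} \star \mathscr{F} \simeq \mathscr{F}$ from the original isomorphism.

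For the forward direction, assume $SS(\mathscr{F}) \subset V \times (\gamma^\vee)^\circ$, and let $\mathscr{C}$ be the cone of $\Bbbk_\gamma \star \mathscr{F} \to \mathscr{F}$. I would check $\mathscr{C} = 0$ on stalks: a base-change calculation identifies $(\Bbbk_\gamma \star \mathscr{F})_v$ with $\mathrm{R}\Gamma(v - \gamma, \mathscr{F})$, and the map to $\mathscr{F}_v$ is the restriction to the vertex of $v-\gamma$. A smooth exhaustion of $v-\gamma$ by sublevels has outward differentials avoiding $(\gamma^\vee)^\circ$, hence non-characteristic for $\mathscr{F}$; the non-characteristic propagation theorem then yields $\mathrm{R}\Gamma(v-\gamma,\mathscr{F}) \xrightarrow{\sim} \mathscr{F}_v$, so $\mathscr{C}_v = 0$ for every $v$.

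The main obstacle is the non-properness of the summation map when $\gamma$ is unbounded: $s$ fails to be proper on $\gamma \times \mathrm{supp}(\mathscr{F})$, so neither the stalk formula nor the convolution singular-support bound is automatic. My plan to handle this is to exhaust $\gamma$ by compact truncations $\gamma_R = \gamma \cap \bar B_R$, apply proper base change to each $\Bbbk_{\gamma_R} \star \mathscr{F}$, and then show that the resulting tower stabilizes as $R \to \infty$, where stabilization is ensured precisely by the microlocal hypothesis at infinity through propagation on the enlarging boundary. Alternatively, one can reduce the convex cone $\gamma$ to an intersection of half-spaces, establish the one-dimensional half-line case directly, and assemble the general case via a Mayer--Vietoris argument.
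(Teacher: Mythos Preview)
The paper does not supply its own proof of this lemma: it is stated as a preliminary result and attributed to Kashiwara--Schapira \cite{KS}*{Proposition 5.2.3} and Guillermou \cite{Gui}*{Proposition 2.9}, with no argument given. So there is nothing in the paper to compare your proposal against.

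That said, your sketch is broadly in the spirit of the standard proof. A couple of remarks. First, be careful with the convolution convention: the paper uses $\star = s_*$, not $s_!$, so the stalk formula $(\Bbbk_\gamma \star \mathscr{F})_v \simeq \mathrm{R}\Gamma(v-\gamma,\mathscr{F})$ is not immediate from base change and genuinely requires the non-characteristic hypothesis (or a reduction to $s_!$ via the microlocal condition). Second, your plan for the reverse direction---deducing the open polar from the closed polar by enlarging $\gamma$---needs justification that $\Bbbk_{\gamma'} \star \mathscr{F} \simeq \mathscr{F}$ follows from $\Bbbk_\gamma \star \mathscr{F} \simeq \mathscr{F}$ for $\gamma' \supset \gamma$; this is not automatic and in the standard treatment is handled differently (the original KS argument works via the $\gamma$-topology and a direct microsupport computation rather than a bootstrapping over cones). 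If you want a self-contained proof, I would suggest consulting the cited references directly rather than reconstructing it, since the lemma is used as a black box in the paper.
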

\begin{remark}
    In Kashiwara-Schapira they use $\gamma^\circ$ as the polar set and $\mathrm{Int}(\gamma^\circ)$ for its interior but here we use different notions.
\end{remark}

    Here are some singular support estimates we are going to use. Let $f: M \rightarrow N$ be a smooth map. Then we have the following maps between vector bundles
    $$T^*M \xleftarrow{f_d} M \times_N T^*N \xrightarrow{f_\pi} T^*N,$$
    where $f_\pi$ is the natural map determined by fiber product, and $f_d$ is the pullback map of covectors or differential forms. More explicitly, for $(x, \eta) \in M \times_N T^*N$ where $\eta \in T_{f(x)}^*N$,
    $$f_\pi(x, \eta) = (f(x), \eta), \,\,\, f_d(x, \eta) = (x, f^*\eta).$$

\begin{proposition}[\cite{KS}*{Proposition 5.4.5}]
    Let $\mathscr{F} \in Sh(N)$ and $f: M \rightarrow N$ be a submersion. Then
    $$SS(f^{-1}\mathscr{F}) = f_d f_\pi^{-1}(SS(\mathscr{F})).$$
\end{proposition}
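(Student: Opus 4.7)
The plan is to prove the two inclusions separately; the submersion hypothesis enters each half in a different way. The inclusion $SS(f^{-1}\mathscr{F}) \subseteq f_d f_\pi^{-1}(SS(\mathscr{F}))$ is really a special case of the general non-characteristic pullback estimate. The key observation is that when $f$ is a submersion, the fiberwise pullback $f^\ast : T^\ast_{f(x)}N \to T^\ast_x M$ is injective, so $f_d$ has zero kernel. Consequently, the non-characteristic condition $f_\pi^{-1}(SS(\mathscr{F})) \cap \ker(f_d) \subset $ zero section is automatic, and the general estimate (which one proves by testing against functions of the form $\varphi = \psi \circ f$ and computing $\Gamma_{\varphi \geq 0}(f^{-1}\mathscr{F})_x$ via base change from a corresponding computation on $N$) delivers the first inclusion.

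For the reverse inclusion, I would reduce to a local product model. Singular support is a local invariant on $M$, and by the submersion theorem, near any point $x_0 \in M$ there exist coordinates identifying $f$ with the first projection $\pi_N : N \times L \to N$, where $L \subset \mathbb{R}^k$ is open with $k = \dim M - \dim N$. Under this identification $f^{-1}\mathscr{F} \simeq \mathscr{F} \boxtimes \Bbbk_L$, so it suffices to prove
\[
SS(\mathscr{F} \boxtimes \Bbbk_L) \;=\; SS(\mathscr{F}) \times \bigl( L \times \{0\} \bigr)
\]
inside $T^\ast N \times T^\ast L \simeq T^\ast(N \times L)$, and then to check that the right-hand side coincides with $(\pi_N)_d (\pi_N)_\pi^{-1}(SS(\mathscr{F}))$. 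The latter matching is essentially bookkeeping: $(\pi_N)_\pi^{-1}(x,\xi)$ contributes $\{x\} \times L$, and $(\pi_N)_d$ sends $\xi \in T^\ast_x N$ to $(\xi, 0)$ in $T^\ast_x N \oplus T^\ast_\ell L$.

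What remains is the product formula for singular supports. The $\subseteq$ direction here follows again from the non-characteristic estimate applied to both coordinate projections. The $\supseteq$ direction is the main obstacle: given $(x, \xi) \in SS(\mathscr{F})$ and a target point $((x, \ell), (\xi, 0))$, one must exhibit a test function $\varphi$ on $N \times L$ with $\varphi(x, \ell) = 0$, $d\varphi(x, \ell) = (\xi, 0)$, and with $\Gamma_{\varphi \geq 0}(\mathscr{F} \boxtimes \Bbbk_L)_{(x, \ell)} \neq 0$. The natural choice $\varphi(x, \ell) = \varphi_1(x) + \varphi_2(\ell)$ with $d\varphi_1(x) = \xi$ and $\varphi_2$ a generic function vanishing to first order at $\ell$ reduces the computation to a Künneth-type splitting. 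The simplification here is that we only ever test against the zero covector on the $L$-factor, so $\Gamma_{\varphi_2 \geq 0}(\Bbbk_L)_\ell$ reduces to a computation on a half-space where one gets $\Bbbk$ (up to shift) by a direct Morse-theoretic argument. Once the product formula is in hand, the proof is complete modulo the two identifications already noted.
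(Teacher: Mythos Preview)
The paper does not give its own proof of this proposition; it is quoted from Kashiwara--Schapira \cite{KS}*{Proposition 5.4.5} as background and used as a black box. So there is nothing in the paper to compare your argument against.

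Your outline is the standard one and is essentially correct, but there is a slip in the $\supseteq$ step for the product formula. You want to hit the covector $(\xi,0)$ at $(x,\ell)$, so your test function must satisfy $d\varphi_2(\ell)=0$; a function ``vanishing to first order at $\ell$'' has $d\varphi_2(\ell)\neq 0$ and would give the wrong covector. More to the point, any nontrivial $\varphi_2$ with a critical point at $\ell$ destroys the product structure of $\{\varphi_1+\varphi_2\ge 0\}$ and the K\"unneth splitting you invoke is no longer available. The clean fix is to take $\varphi_2\equiv 0$, i.e.\ set $\varphi=\varphi_1\circ\pi_N$. Then $\{\varphi\ge 0\}=\{\varphi_1\ge 0\}\times L$ is an honest product, and
\[
\Gamma_{\varphi\ge 0}(\mathscr{F}\boxtimes\Bbbk_L)_{(x,\ell)}\;\simeq\;\Gamma_{\varphi_1\ge 0}(\mathscr{F})_x\otimes\Bbbk\;\neq\;0,
\]
which is exactly what you need. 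In fact, once you see this you can skip the local product reduction entirely: for any submersion $f$ and any witness $\varphi_1$ for $(f(x),\xi)\in SS(\mathscr{F})$, the pulled-back function $\varphi=\varphi_1\circ f$ is a witness for $(x,f^\ast\xi)\in SS(f^{-1}\mathscr{F})$, since $\Gamma_{\varphi\ge 0}(f^{-1}\mathscr{F})_x\simeq\Gamma_{\varphi_1\ge 0}(\mathscr{F})_{f(x)}$ by smooth base change along the submersion. With that correction, your proof is complete.
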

\begin{proposition}[\cite{KS}*{Proposition 5.4.4}]\label{sspushforward}
    Let $\mathscr{F} \in Sh(M)$ and $f: M \rightarrow N$ be a proper smooth map. Then
    $$SS(f_*\mathscr{F}) \subset f_\pi f_d^{-1}(SS(\mathscr{F})).$$
\end{proposition}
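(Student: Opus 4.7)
This is the classical Kashiwara--Schapira estimate; I outline the route I would take. The plan is to fix $(y_0, \eta_0) \notin f_\pi f_d^{-1}(SS(\mathscr{F}))$ and verify that it lies outside $SS(f_*\mathscr{F})$ by producing a test function $\psi \in C^1(N)$ with $\psi(y_0) = 0$, $d\psi(y_0) = \eta_0$ such that $R\Gamma_{\{\psi \geq 0\}}(f_*\mathscr{F})_{y_0} = 0$. The first step is to bring the computation down to $M$: properness gives $f_* = f_!$ and $f_*$ commutes with restriction to opens, so
$$R\Gamma_{\{\psi \geq 0\}}(f_*\mathscr{F})_{y_0} \;\simeq\; \varinjlim_{U \ni y_0} R\Gamma\bigl(f^{-1}(U),\, R\Gamma_{\{\psi \circ f \geq 0\}}\mathscr{F}\bigr),$$
a colimit over shrinking neighbourhoods of the compact fibre $f^{-1}(y_0)$.

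Second, at each point $x \in f^{-1}(y_0)$, the point $(x, f_x^*\eta_0)$ belongs to $f_d(f_\pi^{-1}(y_0,\eta_0))$, so by the contrapositive of the hypothesis $(x, f_x^*\eta_0) \notin SS(\mathscr{F})$. Applied to the test function $\psi \circ f$ on $M$, which has value $0$ and differential $f_x^*\eta_0$ at $x$, the defining property of singular support yields $(R\Gamma_{\{\psi \circ f \geq 0\}}\mathscr{F})_x = 0$, and hence this sheaf vanishes on some open neighbourhood $W_x$ of $x$ in $M$.

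The main obstacle is upgrading this pointwise vanishing over the fibre to vanishing on a full neighbourhood $f^{-1}(U)$ of $f^{-1}(y_0)$, and properness is precisely what makes this possible. Since $f^{-1}(y_0)$ is compact, finitely many of the $W_{x_i}$ cover it; since $f$ is a closed map, one can then find an open $U \ni y_0$ with $f^{-1}(U) \subset \bigcup_i W_{x_i}$. On this union the sheaf $R\Gamma_{\{\psi \circ f \geq 0\}}\mathscr{F}$ vanishes, hence so do its sections on $f^{-1}(U)$, and the colimit formula above forces $R\Gamma_{\{\psi \geq 0\}}(f_*\mathscr{F})_{y_0} = 0$, completing the verification. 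No genuinely nontrivial input beyond the definition of $SS$ and the closedness of $f$ is needed.
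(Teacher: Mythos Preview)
The paper does not prove this proposition; it is quoted from Kashiwara--Schapira as a preliminary. Your outline follows the standard route, but there is a genuine gap in the middle.

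The step ``$(R\Gamma_{\{\psi\circ f\ge 0\}}\mathscr{F})_x = 0$, and hence this sheaf vanishes on some open neighbourhood $W_x$ of $x$'' is false. For $x'$ near $x$ with $(\psi\circ f)(x')>0$, the set $\{\psi\circ f\ge 0\}$ contains a full neighbourhood of $x'$, so the stalk of $R\Gamma_{\{\psi\circ f\ge 0\}}\mathscr{F}$ at $x'$ is simply $\mathscr{F}_{x'}$, which has no reason to vanish. Your compactness-and-closed-map covering argument therefore does not go through.

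The fix is to drop the neighbourhood step altogether. You have already established that the stalks of $\mathscr{G}:=R\Gamma_{\{\psi\circ f\ge 0\}}\mathscr{F}$ vanish at every point of the compact fibre $f^{-1}(y_0)$, so $\mathscr{G}|_{f^{-1}(y_0)}=0$. Proper base change (equivalently, your own colimit formula together with the identification of that colimit with sections over the compact fibre) then gives
\[
R\Gamma_{\{\psi\ge 0\}}(f_*\mathscr{F})_{y_0}\;\simeq\;(f_*\mathscr{G})_{y_0}\;\simeq\; R\Gamma\bigl(f^{-1}(y_0),\,\mathscr{G}|_{f^{-1}(y_0)}\bigr)\;=\;0.
\]
A minor additional point: to exclude $(y_0,\eta_0)$ from $SS(f_*\mathscr{F})$ you must check this vanishing for \emph{every} test function $\psi$, not ``produce'' one; your argument does in fact apply to any $\psi$, so this is only a slip of phrasing.
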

\begin{remark}
    In Kashiwara-Schapira, they call a smooth/continuous map as a morphism between manifolds, and call a submersion as a smooth morphism beween manifolds. Here we instead use the terminologies that may be more familiar to geometric topologists.
\end{remark}

\begin{proposition}[\cite{KS}*{Proposition 5.4.14}]\label{ssforhom}
    Let $\mathscr{F, G} \in Sh(M)$. Suppose $(-SS(\mathscr{F})) \cap SS(\mathscr{G}) \subset M \subset T^*M$. Then
    $$SS(\mathscr{F} \otimes \mathscr{G}) \subset SS(\mathscr{F}) + SS(\mathscr{G}).$$
    Suppose $SS(\mathscr{F}) \cap SS(\mathscr{G}) \subset M \subset T^*M$. Then
    $$SS(\mathscr{H}om(\mathscr{F, G})) \subset (-SS(\mathscr{F})) + SS(\mathscr{G}).$$
    Under the assumption, when $\mathscr{F}$ is constructible, then $\mathscr{H}om(\mathscr{F, G}) \simeq D'\mathscr{F} \otimes \mathscr{G}$.
\end{proposition}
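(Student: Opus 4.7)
The plan is to reduce both singular support estimates to a single mechanism: rewrite $\otimes$ and $\mathscr{H}om$ as the diagonal pullback of an external bifunctor on $M \times M$, then invoke the two preceding estimates (non-characteristic inverse image and proper direct image). Let $\delta: M \to M \times M$ be the diagonal and $\pi_i: M \times M \to M$ the projections. I would first establish the external estimates
$$SS(\pi_1^{-1}\mathscr{F} \otimes \pi_2^{-1}\mathscr{G}) \subset SS(\mathscr{F}) \times SS(\mathscr{G}),$$
$$SS(\mathscr{H}om(\pi_1^{-1}\mathscr{F}, \pi_2^!\mathscr{G})) \subset (-SS(\mathscr{F})) \times SS(\mathscr{G}),$$
viewing $T^*(M \times M) = T^*M \times T^*M$. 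The first follows from the submersion formula applied to $\pi_1, \pi_2$ and the fact that exterior tensor has disjoint-fiber-direction singular supports so no addition occurs. The second follows by combining $\pi_2^! \simeq \pi_2^{-1}[\dim M]$ with the general bound $SS(\mathscr{H}om(\mathscr{A,B})) \subset (-SS(\mathscr{A})) + SS(\mathscr{B})$, which in the disjoint-direction exterior situation degenerates to a product.

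Next, identify $\mathscr{F} \otimes \mathscr{G} \simeq \delta^{-1}(\pi_1^{-1}\mathscr{F} \otimes \pi_2^{-1}\mathscr{G})$ and $\mathscr{H}om(\mathscr{F, G}) \simeq \delta^! \mathscr{H}om(\pi_1^{-1}\mathscr{F}, \pi_2^! \mathscr{G})$. For the diagonal $\delta$ the cotangent maps are given by $\delta_\pi(x,\xi_1,\xi_2) = ((x,x),(\xi_1,\xi_2))$ and $\delta_d(x,\xi_1,\xi_2) = (x,\xi_1+\xi_2)$, so $\delta_d \delta_\pi^{-1}$ is fiberwise addition. The non-characteristic hypothesis for $\delta^{-1}$ applied to a subset $A \subset T^*(M \times M)$ requires that $A$ meet the conormal to the diagonal only in the zero section; for $A = SS(\mathscr{F}) \times SS(\mathscr{G})$ this translates exactly to $(\xi_1, \xi_2) = 0$ whenever $\xi_1 + \xi_2 = 0$, i.e.\ $(-SS(\mathscr{F})) \cap SS(\mathscr{G}) \subset 0_M$. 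Applying the non-characteristic pullback theorem cited just above yields $SS(\mathscr{F} \otimes \mathscr{G}) \subset \delta_d \delta_\pi^{-1}(SS(\mathscr{F}) \times SS(\mathscr{G})) = SS(\mathscr{F}) + SS(\mathscr{G})$. The Hom case is formally the same, except that the antipode on the first factor produced by $\pi_1^{-1}$ inside a Hom flips the sign, so the hypothesis $SS(\mathscr{F}) \cap SS(\mathscr{G}) \subset 0_M$ is the correct non-characteristicness condition for $\delta^!$, and the addition formula then gives the bound by $(-SS(\mathscr{F})) + SS(\mathscr{G})$.

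For the final isomorphism $\mathscr{H}om(\mathscr{F, G}) \simeq D'\mathscr{F} \otimes \mathscr{G}$ under constructibility of $\mathscr{F}$, I would produce the canonical natural morphism $D'\mathscr{F} \otimes \mathscr{G} \to \mathscr{H}om(\mathscr{F, G})$ coming from the evaluation pairing, and check that it is a stalkwise quasi-isomorphism. Because $\mathscr{F}$ is constructible, this reduces locally to the case $\mathscr{F} = \Bbbk_V$ for $V$ a locally closed subanalytic subset, and the non-characteristic hypothesis guarantees that the conormal directions of $V$ do not interact with $SS(\mathscr{G})$, so both sides compute the same local sections/costalks of $\mathscr{G}$ along $V$. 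Alternatively, one can invoke Proposition \ref{construct} (Kashiwara–Schapira 3.4.6) to write $\mathscr{H}om(\mathscr{F, G}) \simeq D(D\mathscr{G} \otimes \mathscr{F})$ and then use the tensor-dualizing interaction under the same non-characteristic condition.

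The main obstacle is the bookkeeping of signs and of the shifts coming from $\pi_2^!$ versus $\pi_2^{-1}$, and the precise translation between the abstract non-characteristic condition along the conormal to the diagonal and the concrete statement $(\pm SS(\mathscr{F})) \cap SS(\mathscr{G}) \subset M$. Once that dictionary is written out, the rest is formal consequence of the two pullback/pushforward estimates already invoked in the paper.
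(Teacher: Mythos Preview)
The paper does not give its own proof of this proposition: it is quoted verbatim from Kashiwara--Schapira with the citation \cite{KS}*{Proposition 5.4.14} and used as a black box in the preliminaries. So there is no in-paper argument to compare against.

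That said, your proposal is essentially the standard proof one finds in Kashiwara--Schapira, and it is correct in outline. One point of bookkeeping: you invoke the ``non-characteristic inverse image'' estimate for the diagonal $\delta$, but the paper's preliminary section only records the submersion case (\cite{KS}*{Proposition 5.4.5}), which does not apply to $\delta$. You need the more general non-characteristic bound (\cite{KS}*{Proposition 5.4.13}) for both $\delta^{-1}$ and $\delta^!$; this is indeed what Kashiwara--Schapira use, so the gap is only relative to what the present paper chose to quote, not relative to the literature. The identity $\mathscr{H}om(\mathscr{F},\mathscr{G}) \simeq \delta^!\mathscr{H}om(\pi_1^{-1}\mathscr{F}, \pi_2^!\mathscr{G})$ you use is the standard one (e.g.\ \cite{KS}*{Proposition 3.1.12} combined with $\pi_i \circ \delta = \mathrm{id}$), and your translation of the conormal-to-the-diagonal condition into $(\pm SS(\mathscr{F})) \cap SS(\mathscr{G}) \subset 0_M$ is exactly right. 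The final isomorphism under constructibility is likewise handled in KS by the biduality/evaluation argument you sketch.
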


    One machinery that we will be frequently using is the microlocal Morse thoery. We state the results here.

\begin{proposition}[microlocal Morse lemma \cite{KS}*{Corollary 5.4.19}]\label{morselemma}
    Let $\mathscr{F} \in Sh(M)$ and $f: M \rightarrow \mathbb{R}$ be a smooth function that is proper on $\mathrm{supp}(\mathscr{F})$. Suppose for any $x\in f^{-1}([a, b))$, $df(x) \notin SS(\mathscr{F})$. Then
    $$\Gamma(f^{-1}((-\infty, b)), \mathscr{F}) \xrightarrow{\sim} \Gamma(f^{-1}((-\infty, a)), \mathscr{F}).$$
\end{proposition}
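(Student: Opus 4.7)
My plan is to reduce the statement to a one-dimensional propagation problem by pushing forward along $f$ and then invoking the non-characteristic deformation lemma. Set $\mathscr{G} := f_*\mathscr{F} \in Sh^b(\mathbb{R})$. Since $f$ is proper on $\mathrm{supp}(\mathscr{F})$, we have $f_*\mathscr{F} \simeq f_!\mathscr{F}$, and adjunction gives $\Gamma(U, \mathscr{G}) \simeq \Gamma(f^{-1}(U), \mathscr{F})$ for every open $U \subset \mathbb{R}$. Thus it suffices to show that the restriction $\Gamma((-\infty, b), \mathscr{G}) \to \Gamma((-\infty, a), \mathscr{G})$ is a quasi-isomorphism.

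To feed the singular-support hypothesis into this one-dimensional picture, I would apply the pushforward estimate $SS(f_*\mathscr{F}) \subset f_\pi f_d^{-1}(SS(\mathscr{F}))$ stated just above. If $(t, \tau) \in SS(\mathscr{G})$ with $\tau > 0$, then there must exist $x \in f^{-1}(t)$ with $(x, \tau\,df(x)) \in SS(\mathscr{F})$; by the $\mathbb{R}_{>0}$-conicity of the singular support, this forces $df(x) \in SS(\mathscr{F})$, which the hypothesis rules out whenever $t \in [a, b)$. Consequently
$$SS(\mathscr{G}) \cap \bigl([a, b) \times \mathbb{R}_{>0}\bigr) = \emptyset,$$
i.e.~for every $t \in [a, b)$ the outward conormal $(t, +1)$ to the half-line $(-\infty, t)$ at its boundary point avoids $SS(\mathscr{G})$.

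I would then apply non-characteristic propagation of sections to the increasing family of open sets $U_t := (-\infty, t)$, $t \in [a, b)$. The vanishing above implies that at each $t \in [a, b)$ the boundary $\{t\}$ is non-characteristic for $\mathscr{G}$, so the restriction $\Gamma(U_{t'}, \mathscr{G}) \to \Gamma(U_t, \mathscr{G})$ is an isomorphism whenever $a \leq t \leq t' < b$. Taking the limit $t' \to b^-$, using that $(-\infty, b) = \bigcup_{t < b} U_t$ and that sheaf sections on an increasing union form an inverse limit, would then yield the desired isomorphism $\Gamma((-\infty, b), \mathscr{G}) \xrightarrow{\sim} \Gamma((-\infty, a), \mathscr{G})$.

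The main obstacle I anticipate is the passage to the limit at the open endpoint $b$, since the hypothesis controls $SS$ only on $[a, b)$ and nothing is assumed at the level set $\{f = b\}$. Here the properness of $f$ on $\mathrm{supp}(\mathscr{F})$ is essential: it ensures that $\mathscr{G}$ has perfect stalks on bounded intervals and guarantees that the inverse limit of the isomorphic cochain complexes $\Gamma(U_t, \mathscr{G})$ as $t \to b^-$ genuinely computes $\Gamma((-\infty, b), \mathscr{G})$. Without properness, one could imagine sections accumulating near $b$ in a way that spoils the limit, so any careful write-up must confirm the Mittag--Leffler type stability of the inverse system at the endpoint.
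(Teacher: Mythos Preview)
The paper does not supply its own proof of this proposition: it is quoted verbatim as Kashiwara--Schapira \cite{KS}*{Corollary 5.4.19} and used as a black box throughout. So there is no in-paper argument to compare against.

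Your outline is correct and is essentially how the result is derived in \cite{KS}. The standard route there applies the non-characteristic deformation lemma (\cite{KS}*{Proposition 2.7.2}) directly to the increasing family $f^{-1}((-\infty,t))$ on $M$, rather than first pushing forward to $\mathbb{R}$; but your detour through $f_*$ and the pushforward estimate $SS(f_*\mathscr{F}) \subset f_\pi f_d^{-1}(SS(\mathscr{F}))$ is a perfectly legitimate repackaging and lands on the same one-parameter propagation. Your worry about the endpoint $b$ is already absorbed by that lemma: once all restriction maps $\Gamma(U_{t'},\mathscr{G}) \to \Gamma(U_t,\mathscr{G})$ are quasi-isomorphisms for $a \le t \le t' < b$, the derived inverse limit over $t \to b^-$ computes $\Gamma((-\infty,b),\mathscr{G})$ and no Mittag--Leffler subtlety arises (the system is constant). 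The one cosmetic point to tighten is your remark that properness ``ensures that $\mathscr{G}$ has perfect stalks'': what properness actually buys you is the validity of the pushforward singular-support estimate; perfectness of stalks is irrelevant to the argument.
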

\begin{example}[\cite{STZ}*{Section 3.3}]\label{combin-model}
    Suppose $\Lambda = \nu^{*,\infty}_{\mathbb{R}^n \times \mathbb{R}_{>0},-}\mathbb{R}^{n+1} \subset T^{*,\infty}\mathbb{R}^{n+1}$ is the inward conormal bundle of $\mathbb{R}^n \times \mathbb{R}_{>0}$ at infinity, and $\mathscr{F} \in Sh^b_\Lambda(\mathbb{R}^{n+1})$. Then by microlocal Morse lemma, $\mathscr{F}|_{\mathbb{R}^n \times \{0\}}$, $\mathscr{F}|_{\mathbb{R}^n \times (0,+\infty)}$ and $\mathscr{F}|_{\mathbb{R}^n \times (-\infty,0)}$ are locally constant sheaves, and
    $$\Gamma(\mathbb{R}^n \times \{0\}, \mathscr{F}) \simeq \Gamma(\mathbb{R}^{n+1}, \mathscr{F}) \simeq \Gamma(\mathbb{R}^n \times [0, +\infty), \mathscr{F}).$$
    Suppose that the locally constant sheaves are
    $$\mathscr{F}|_{\mathbb{R}^n \times [0,+\infty)} = F_+|_{\mathbb{R}^n \times [0,+\infty)}, \,\,\, \mathscr{F}|_{\mathbb{R}^n \times (-\infty,0)} = F_-|_{\mathbb{R}^n \times (-\infty,0)}.$$
    Then $\mathscr{F}$ is determined by the diagram (Figure \ref{singularsupport})
    \[\xymatrix{
    F_- & F_+ \ar[l] \ar[r]^\sim & F_+
    }\]
\end{example}

\begin{figure}
  \centering
  \includegraphics[width=0.25\textwidth]{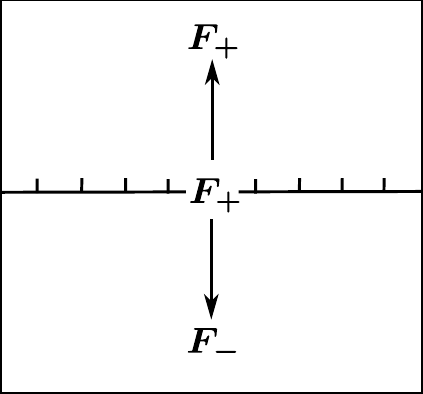}\\
  \caption{The singular support of a sheaf and the combinatoric description.}\label{singularsupport}
\end{figure}

\begin{proposition}[microlocal Morse inequality \cite{KS}*{Proposition 5.4.20}]\label{morseinequality}
    Let $\mathscr{F} \in Sh(M)$ and $f: M \rightarrow \mathbb{R}$ be a smooth function that is proper on $\mathrm{supp}(\mathscr{F})$. Let $\Lambda_\varphi = \{(x, d\varphi(x)) \mid x\in M\}$, and suppose that
    $$SS(\mathscr{F}) \cap \Lambda_\varphi = \{(x_1, \xi_1), \dots, (x_n, \xi_n)\}$$
    and $V_i = \Gamma_{\varphi \geq \varphi(x_i)}(\mathscr{F})_{x_i}$ is finite dimensional. Then $\Gamma(M, \mathscr{F})$ is also finite dimensional and for any $l\in \mathbb{Z}$
    $$\sum_{1\leq i\leq n}\sum_{j \leq l}(-1)^{l-j}\dim H^j(V_i) \geq \sum_{j\leq l}(-1)^{l-j}\dim H^j(M, \mathscr{F}).$$
    In particular for any $j\in \mathbb{Z}$, $\sum_{1\leq i\leq n}\dim H^j(V_i) \geq \dim H^j(M, \mathscr{F})$.
\end{proposition}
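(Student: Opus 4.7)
The plan is to filter $\Gamma(M, \mathscr{F})$ by sublevel sets of $\varphi$, identify each successive quotient with the local Morse datum $V_i$, and then add everything up using the standard Euler-characteristic estimate for a distinguished triangle.

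First I would set up the filtration. Since $\varphi$ is proper on $\mathrm{supp}(\mathscr{F})$, the critical values $c_i = \varphi(x_i)$ form a finite set; reindex so that $c_1 \leq c_2 \leq \cdots \leq c_n$ and choose regular values $a_0 < c_1$ and $c_n < a_n$ together with separating regular values $a_i \in (c_i, c_{i+1})$ whenever $c_i < c_{i+1}$ (coincident critical values can be grouped into a single stratum), arranged so that $\mathrm{supp}(\mathscr{F}) \subset \varphi^{-1}([a_0, a_n))$. Write $U_i = \varphi^{-1}((-\infty, a_i))$. Then $\Gamma(U_0, \mathscr{F}) = 0$ and $\Gamma(U_n, \mathscr{F}) = \Gamma(M, \mathscr{F})$. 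The open inclusion $U_{i-1} \hookrightarrow U_i$ yields the distinguished triangle
$$\Gamma_{\varphi^{-1}([a_{i-1}, a_i))}(U_i, \mathscr{F}) \to \Gamma(U_i, \mathscr{F}) \to \Gamma(U_{i-1}, \mathscr{F}) \xrightarrow{+1}.$$

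Next I would identify the leftmost term with $V_i$. The hypothesis $SS(\mathscr{F}) \cap \Lambda_\varphi = \{(x_1, \xi_1), \ldots, (x_n, \xi_n)\}$ means that on $\varphi^{-1}([a_{i-1}, a_i)) \setminus \{x_i\}$ no point lies above a covector in $SS(\mathscr{F})$. Applying the microlocal Morse lemma (Proposition \ref{morselemma}) to the presheaf $\Gamma_{\{\varphi \geq c_i\}}(\mathscr{F})$ on this complement shows that this subsheaf is acyclic away from $x_i$, so by excision
$$\Gamma_{\varphi^{-1}([a_{i-1}, a_i))}(U_i, \mathscr{F}) \simeq \Gamma_{\{\varphi \geq c_i\}}(\mathscr{F})_{x_i} = V_i.$$

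With this in hand, the last step is purely homological. For any distinguished triangle $A \to B \to C \xrightarrow{+1}$ of complexes with finite-dimensional cohomology, the long exact sequence gives
$$\sum_{j \leq l}(-1)^{l-j}\bigl(\dim H^j A - \dim H^j B + \dim H^j C\bigr) = \dim \mathrm{im}\!\left(H^l(C) \to H^{l+1}(A)\right) \geq 0,$$
so the functional $f_l(X) = \sum_{j \leq l} (-1)^{l-j} \dim H^j(X)$ satisfies $f_l(B) \leq f_l(A) + f_l(C)$. Applying this inductively to the triangles of Step 2, starting from $\Gamma(U_0, \mathscr{F}) = 0$, delivers both the finite-dimensionality of $\Gamma(M, \mathscr{F})$ and the stated inequality; the unsigned bound $\dim H^j(M, \mathscr{F}) \leq \sum_i \dim H^j V_i$ then follows by adding the $l$ and $l-1$ versions.

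The main obstacle is the local identification in Step 2: one has to check rigorously that the sections-with-support functor in the thin slab between two regular values collapses to a single stalk computation at the critical point. This is where Proposition \ref{morselemma} enters, together with excision and the observation that the noncharacteristic condition holds pointwise on $\varphi^{-1}([a_{i-1}, a_i)) \setminus \{x_i\}$. Once this identification is secured the remainder is standard long-exact-sequence bookkeeping, and no further microlocal input is required.
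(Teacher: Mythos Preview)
The paper does not give its own proof of this proposition; it is quoted as a preliminary result from Kashiwara--Schapira \cite{KS}*{Proposition 5.4.20}. Your argument---filter $\Gamma(M,\mathscr{F})$ by sublevel sets of $\varphi$, use Proposition~\ref{morselemma} together with excision to identify each successive cone with the local Morse datum $V_i$, and then apply the subadditivity of truncated Euler characteristics across distinguished triangles---is exactly the standard proof found there, and is correct as outlined.
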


\subsection{Microlocalization and $\mu Sh$}
    We review the definition and properties of microlocalization and the sheaf of categories $\mu Sh$ and $\mu Sh_\Lambda$, considered in \cite{Gui,NadWrapped,Guisurvey,NadShen}. This will mainly used in the proof of the exact triangle (Theorem \ref{exactseq}).

\begin{definition}
    Let $\Lambda \subset T^{*,\infty}M$ be a subset. Define a presheaf of dg categories on $T^{*,\infty}M$ supported on $\Lambda$ to be
    $$\mu Sh^{\text{pre}}_\Lambda: \, \Omega \mapsto Sh_{\Lambda \cup T^{*,\infty}M \backslash \Omega}(M)/Sh_{T^{*,\infty}M \backslash \Omega}(M).$$
    The sheafification of $\mu Sh^{\text{pre}}_\Lambda$ is $\mu Sh_\Lambda$. In particular, write $\mu Sh = \mu Sh_{T^{*,\infty}M}$ for the sheaf of categories on $T^{*,\infty}M$.

    Let $Sh_{(\Lambda)}(M)$ be the category of sheaves $\mathscr{F}$ such that there exists an open set $\Omega$ containing $\Lambda$ satisfying $SS^\infty(\mathscr{F}) \cap \Omega \subset \Lambda$. For $\mathscr{F, G} \in Sh(M)$, let the sheaf of homomorphisms in $\mu Sh_\Lambda$ be
    $$\mu hom(\mathscr{F}, \mathscr{G})|_\Lambda: \, U \mapsto Hom_{\mu Sh_\Lambda}(\mathscr{F, G}).$$
    In particular, write $\mu hom(\mathscr{F}, \mathscr{G})|_{T^{*,\infty}M}$ to be the sheaf of homomorphisms in $\mu Sh$.
\end{definition}
\begin{remark}
    We briefly explain the relation between the above definition and Guillermou's definition \cite[Definition 10.1.1]{Guisurvey}. Since in our definition, the presheaf of categories $\mu Sh_\Lambda^{\text{pre}}$ is supported on $\Lambda$, we know that for an open set $\Lambda_0 \subset \Lambda$ in the relative topology, 
    \begin{align*}
    \mu Sh_\Lambda^{\text{pre}}(\Lambda_0) = \operatorname{colim}_{\Omega \colon \Omega \cap \Lambda = \Lambda_0}\mu Sh_\Lambda^{\text{pre}}(\Lambda_0) \cong Sh_{(\Lambda_0)}(M)/Sh_{T^{*,\infty}M \setminus \Lambda_0}(M).
    \end{align*}
    Then \cite[Theorem 10.1.5]{Guisurvey} shows that the hom in $\mu Sh$ indeed agrees with the sheaf $\mu hom$ in \cite[Section 4.4]{KS}.
    One reason to give the definition as above is that $Sh_{T^{*,\infty}M \backslash \Omega}(M)$ is closed under colimits and we can take localization of (unbounded but) complete dg categories.
\end{remark}

    Denote by $m_\Lambda$ the natural quotient functor on the sheaf of categories, which, on the level of global sections, induce
    $$m_\Lambda: \, Sh_\Lambda(M) \rightarrow \mu Sh_\Lambda(\Lambda).$$
    We call $m_\Lambda$ the microlocalization functor.

    Now we define the notion of microstalks, and thus define simple sheaves and pure sheaves, or microlocal rank $r$ sheaves.

\begin{definition}
    Let $\Lambda \subset T^{*,\infty}M$ be a Legendrian submanifold. Suppose $\mu(\Lambda) = 0$ and $\Lambda$ is relative spin. For $p = (x, \xi) \in \Lambda$, $\varphi \in C^1(M)$ such that $\varphi(x) = 0,d\varphi(x) = \xi$, the microstalk of $\mathscr{F} \in Sh(M)$ at $p$ is defined up to degree shifts as
    $$m_{\Lambda,p}(\mathscr{F}) = \Gamma_{\varphi\geq 0}(\mathscr{F})_x.$$
    $\mathscr{F} \in Sh_\Lambda(M)$ is called microlocal rank $r$ if $m_{\Lambda,p}(\mathscr{F})$ is concentrated at a single degree with rank $r$. In this case $\mathscr{F}$ is called pure, and when $r = 1$ it is also called simple.
\end{definition}

    The following proposition justifies the name microstalk, showing that microstalks are indeed the stalks of the sheaf of categories $\mu Sh_\Lambda$ on a smooth Legendrian $\Lambda$.

\begin{proposition}[\cite{Guisurvey}*{Equation (1.4.6), Remark 10.1.7 \& Lemma 10.2.2}, \cite{NadShen}*{Corollary 5.4}]\label{microstalk}
    For $p = (x, \xi) \in \Lambda \subset T^{*,\infty}M$ where $\Lambda \subset T^{*,\infty}M$ is a smooth Legendrian, the microstalk satisfies the following: for $\mathscr{F, G} \in Sh_{(\Lambda)}(M)$, $\varphi \in C^1(M)$ such that $\varphi(x) = 0,d\varphi(x) = \xi$,
    $$Hom_{\mu Sh_{p}}(\mathscr{F, G}) = \mu hom(\mathscr{F, G})_p = Hom(\Gamma_{\varphi \geq 0}(\mathscr{F})_x, \Gamma_{\varphi \geq 0}(\mathscr{G})_x).$$ 
    In particular, the stalk of $\mu Sh_\Lambda$ is $\mu Sh_{\Lambda,p} \simeq \mathrm{Mod}(\Bbbk)$.
\end{proposition}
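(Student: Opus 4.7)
The plan is to identify the stalk of the sheafified quotient with the explicit $Hom$ of microstalks. First I would unravel the definition: by construction of $\mu Sh^b$ as a sheafification,
$$Hom_{\mu Sh^b_p}(\mathscr{F, G}) \simeq \varinjlim_{p \in U} Hom_{Sh^b_{(\Lambda)}(M)/Sh^b_{T^{*,\infty}M \setminus U}(M)}(\mathscr{F, G}),$$
where the colimit runs over a neighbourhood base of $p$ in $T^{*,\infty}M$. The task reduces to matching this colimit with $Hom(\Gamma_{\varphi \geq 0}(\mathscr{F})_x, \Gamma_{\varphi \geq 0}(\mathscr{G})_x)$.

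Next I would construct the comparison map. The assignment $\mu_\varphi : \mathscr{F} \mapsto \Gamma_{\varphi \geq 0}(\mathscr{F})_x$ is a functor from $Sh^b(M)$ to perfect complexes of $\Bbbk$-modules. By the very definition of singular support, if $p \notin SS^\infty(\mathscr{H})$ then $\mu_\varphi(\mathscr{H}) \simeq 0$. Hence $\mu_\varphi$ kills $Sh^b_{T^{*,\infty}M \setminus U}(M)$ for every sufficiently small neighbourhood $U$ of $p$; so $\mu_\varphi$ factors through each quotient and through the colimit, inducing a natural comparison map
$$\Psi_\varphi: Hom_{\mu Sh^b_p}(\mathscr{F, G}) \to Hom(\Gamma_{\varphi \geq 0}(\mathscr{F})_x, \Gamma_{\varphi \geq 0}(\mathscr{G})_x).$$
Independence of the particular choice of $\varphi$ (subject to $\varphi(x) = 0$ and $d\varphi(x) = \xi$) follows from the microlocal Morse lemma (Proposition \ref{morselemma}) applied to a linear homotopy between two admissible test functions, since the sheaves obstructing the comparison have singular support missing $p$.

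To show that $\Psi_\varphi$ is an equivalence, I would reduce to a conormal model. By the Guillermou--Kashiwara--Schapira quantization of contact isotopies (a contact Darboux theorem at the smooth Legendrian point $p$, together with its sheaf-theoretic lift), there is a contactomorphism sending a neighbourhood of $(p, \Lambda)$ to a neighbourhood of a point on the inward conormal $\nu^{*,\infty}_{N,-}M$ of a coorientable hypersurface $N \subset M$, and this contactomorphism lifts to an equivalence of the associated categories $Sh^b_{(\Lambda)}$ commuting with microlocalization. In this model, every $\mathscr{F} \in Sh^b_{(\Lambda)}(M)$ is locally described by the combinatorial data of Example \ref{combin-model}, namely a pair of locally constant sheaves $F_-, F_+$ together with a restriction map $F_+ \to F_-$; a direct computation shows that $\Gamma_{\varphi \geq 0}(\mathscr{F})_x$ recovers the cone of this restriction (up to shift), and morphisms in the localized category are exactly morphisms of such cones.

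The main obstacle is the reduction to the conormal model and, relatedly, the fact that the sheafification $\mu Sh^{b,\text{pre}} \to \mu Sh^b$ does not change the stalk at a smooth Legendrian point, so that computing the colimit in the presheaf presentation already gives the correct answer. Both inputs are the substantive content of \cite{Gui}; granting them, the explicit formula in the statement is the output of the conormal model computation and appears as \cite{Gui}*{Equation 6.4}. I would therefore appeal to \cite{Gui} for the reduction and for the sheaf property at smooth Legendrian points, then perform the model computation above to extract the closed form.
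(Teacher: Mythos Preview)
The paper does not give its own proof of this proposition; it is stated as a citation from Guillermou \cite{Gui}*{Equation 6.4} and \cite{Guisurvey}*{Equation 1.4.6}, with no argument supplied beyond the reference. Your proposal is therefore not competing against a proof in the paper but rather sketching the content of the cited result, and as such it is a reasonable outline: you correctly identify the stalk of the sheafification as a colimit of quotient-category $Hom$s, observe that the microstalk functor $\mu_\varphi$ annihilates the subcategory being quotiented out and hence factors through, and then reduce to the conormal model via a contact Darboux argument quantized by GKS. You are also honest that the substantive steps---the invariance under contact isotopy at the level of the localized categories and the fact that the presheaf $\mu Sh^{b,\text{pre}}$ already has the correct stalks at smooth Legendrian points---are imported from \cite{Gui}.

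One point to tighten: the claim that ``morphisms in the localized category are exactly morphisms of such cones'' is the heart of the matter and is not just a model computation. In the Verdier quotient, $Hom$ is a colimit over roofs $\mathscr{F} \leftarrow \mathscr{F}' \to \mathscr{G}$ with $\mathrm{Cone}(\mathscr{F}' \to \mathscr{F})$ microsupported away from $p$, and one must show this colimit stabilizes to the single term $Hom(\Gamma_{\varphi\geq 0}(\mathscr{F})_x, \Gamma_{\varphi\geq 0}(\mathscr{G})_x)$. In the conormal model this amounts to checking that any such $\mathscr{F}'$ has the same microstalk as $\mathscr{F}$ and that every map of microstalks lifts to an actual roof; this is where Guillermou's analysis (or the Kashiwara--Schapira $\mu hom$ formalism) is genuinely used. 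Your sketch gestures at this but does not carry it out, which is fine given that the paper itself simply cites the result.
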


\begin{proposition}[\cite{Guisurvey}*{Equation (1.4.4)}]
    Let $\Lambda \subset T^{*,\infty}M$ be a Legendrian submanifold. $\mathscr{F} \in Sh_\Lambda(M)$ is microlocal rank $r$ at $p \in \Lambda$ iff
    $$\mu hom(\mathscr{F, F})_p \simeq \Bbbk^{r^2}.$$
\end{proposition}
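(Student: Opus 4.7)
The strategy is to reduce the statement to a standard linear-algebra computation by invoking Guillermou's equivalence $m_\Lambda : \mu Sh^b_\Lambda \xrightarrow{\sim} Loc^b_\Lambda$ (Theorem \ref{Gui}) together with the stalk-level identification $\mu Sh^b_{\Lambda,p} \simeq \mathrm{Perf}(\Bbbk)$. All of the content lies in a one-point calculation for perfect complexes of $\Bbbk$-vector spaces.

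First, I would show that $\mu hom(\mathscr{F,F})_p \simeq End_\Bbbk(m_{\Lambda,p}(\mathscr{F}))$. Since $\mu hom$ is by construction the internal Hom sheaf of $\mu Sh^b$, its restriction to $\Lambda$ represents morphisms in $\mu Sh^b_\Lambda$. Under the equivalence of sheaves of dg categories $m_\Lambda$, internal Homs are carried to internal Homs in $Loc^b_\Lambda$, so at a point $p \in \Lambda$ the stalk is $End_\Bbbk$ of the microstalk. Equivalently, this can be read off directly from the explicit formula $\mu hom(\mathscr{F,F})_p = Hom(\Gamma_{\varphi \geq 0}(\mathscr{F})_x,\Gamma_{\varphi \geq 0}(\mathscr{F})_x)$: the section $\Gamma_{\varphi \geq 0}(\mathscr{F})_x$ agrees with the microstalk up to a (single, uniform) cohomological shift that cancels in the self-Hom.

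Next, the forward direction is immediate: if $m_{\Lambda,p}(\mathscr{F}) \simeq \Bbbk^r[k]$, then
\[
End_\Bbbk(\Bbbk^r[k]) \;\simeq\; End_\Bbbk(\Bbbk^r) \;\simeq\; \Bbbk^{r^2}
\]
as a complex concentrated in degree $0$.

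For the converse, working over the field $\Bbbk$, the perfect complex $V := m_{\Lambda,p}(\mathscr{F})$ decomposes as $V \simeq \bigoplus_i H^i(V)[-i]$, so
\[
End_\Bbbk(V) \;\simeq\; V \otimes V^\vee \;\simeq\; \bigoplus_{i,j} H^i(V) \otimes H^j(V)^\vee [\,j-i\,].
\]
Setting $b_i = \dim_\Bbbk H^i(V)$, the degree-$k$ cohomology has dimension $\sum_{j-i = k} b_i b_j$. The hypothesis $\mu hom(\mathscr{F,F})_p \simeq \Bbbk^{r^2}$ forces $\sum_{j-i = k} b_i b_j = 0$ for all $k \neq 0$. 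If $i_{\min} < i_{\max}$ were two distinct indices with $b_{i_{\min}}, b_{i_{\max}} > 0$, then the term with $k = i_{\max} - i_{\min}$ would be strictly positive, a contradiction; hence at most one $b_{i_0}$ is nonzero. The degree-$0$ condition $b_{i_0}^2 = r^2$ then gives $b_{i_0} = r$, so $V$ is concentrated in a single degree with total rank $r$, which is exactly the microlocal rank $r$ condition.

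The only non-formal ingredient is the compatibility of $m_\Lambda$ with internal Homs used in the first step; this is where Guillermou's theorem does the essential work, and everything else is bookkeeping in linear algebra.
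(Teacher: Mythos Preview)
The paper does not supply its own proof of this proposition: it is quoted as a fact from Guillermou's survey (Equation 1.4.4 there), so there is nothing to compare against on the paper's side.

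Your argument is correct and is in fact the natural way to extract the statement from the surrounding results that \emph{are} stated in the paper. The identification $\mu hom(\mathscr{F},\mathscr{F})_p \simeq End_\Bbbk\bigl(m_{\Lambda,p}(\mathscr{F})\bigr)$ follows either from the equivalence $m_\Lambda$ of Theorem~\ref{Gui} (which carries internal Homs to internal Homs) or, more directly, from the stalk formula $\mu hom(\mathscr{F},\mathscr{F})_p = Hom\bigl(\Gamma_{\varphi\geq 0}(\mathscr{F})_x,\Gamma_{\varphi\geq 0}(\mathscr{F})_x\bigr)$ stated just above together with the identification $m_{\Lambda,p}(\mathscr{F}) = \Gamma_{\varphi\geq 0}(\mathscr{F})_x[-d(p)]$ stated just below; the shift cancels in a self-Hom. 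The linear-algebra step for the converse is clean: since $\Bbbk$ is a field, the perfect complex $V$ splits as $\bigoplus_i H^i(V)[-i]$, and the nonvanishing of $H^k End(V)$ for $k = i_{\max}-i_{\min}$ rules out cohomology spread over more than one degree. One small caveat worth making explicit in your write-up: the very notion of microstalk $m_{\Lambda,p}$ in this paper presupposes the hypotheses of Theorem~\ref{Gui} (Maslov class zero, relative spin), so invoking that equivalence is not an extra assumption but is already baked into the definition you are characterising.
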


    Globally, the sheaf of categories $\mu Sh_\Lambda$ is not always the sheaf of local systems on $\Lambda$, but this is true when $\Lambda$ has zero Maslov class and relative second Stiefel-Whitney class.

\begin{theorem}[Guillermou \cite{Gui}*{Theorem 11.5}]\label{Gui}
    Let $\Lambda \subset T^{*,\infty}M$ be a Legendrian submanifold. Suppose the Maslov class $\mu(\Lambda) = 0$ and $\Lambda$ is relative spin, then as sheaves of categories
    $$\mu Sh_\Lambda \xrightarrow{\sim} Loc_\Lambda.$$
\end{theorem}

\begin{proposition}[Guillermou \cite{Gui}*{Theorem 7.6~(iv), 7.9, 8.10 \& Lemma 11.4}, \cite{Guisurvey}*{Proposition 10.5.3 \& 10.6.2}]
    Let $\Lambda \subset T^{*,\infty}M$ be a Legendrian submanifold. Suppose the Maslov class $\mu(\Lambda) = 0$ and $\Lambda$ is relative spin. When the front projection of $\Lambda$ is a smooth hypersurface near $p$ and $\varphi \in C^1(M)$ is a local defining function for $\Lambda$, then
    $$m_{\Lambda,p}(\mathscr{F}) = \Gamma_{\varphi \geq 0}(\mathscr{F})_x[-d(p)].$$
    For two different points $p$ and $p' \in \Lambda$, $d(p) - d(p')$ is equal to the difference of any Maslov potential at $p$ and $p'$.
\end{proposition}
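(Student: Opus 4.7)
The statement is attributed to Guillermou, and I would approach it by combining the pointwise formula for $\mu hom$ recalled earlier with Guillermou's equivalence $m_\Lambda : \mu Sh^b_\Lambda \xrightarrow{\sim} Loc^b_\Lambda$ and then tracking grading.

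The plan is to first pass to a local model. Since the front projection of $\Lambda$ is a smooth hypersurface near $p$, after contact Darboux I may assume that locally $\Lambda$ is the outward (or inward) positive conormal to the graph of $\varphi$, so that $\varphi$ is a local defining function. By microlocal Morse (Proposition \ref{morselemma}), on this neighbourhood any $\mathscr{F}\in Sh^b_{(\Lambda)}(M)$ is fully described by the combinatorial datum of Example \ref{combin-model}: locally constant sheaves $F_+$ and $F_-$ on the two sides of the front together with a specialization map $F_+\to F_-$. In this model the costalk $\Gamma_{\varphi\geq 0}(\mathscr{F})_x$ is precisely the fiber of the cone of $F_+\to F_-$, i.e.\ the ``microlocal jump'' across the front at $p$.

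Next I would identify this microlocal jump with the stalk of $m_\Lambda(\mathscr{F})$. By the pointwise formula $\mu hom(\mathscr{F},\mathscr{G})_p \simeq Hom(\Gamma_{\varphi\geq 0}(\mathscr{F})_x,\Gamma_{\varphi\geq 0}(\mathscr{G})_x)$, the assignment $\mathscr{F}\mapsto \Gamma_{\varphi\geq 0}(\mathscr{F})_x$ is a fully faithful, $t$-exact functor from the stalk category $\mu Sh^b_{\Lambda,p}$ into $\mathrm{Perf}(\Bbbk)$. Since $\mu Sh^b_{\Lambda,p}\simeq \mathrm{Perf}(\Bbbk)$ by the theorem of Guillermou and Nadler--Shende recalled above, this functor must agree with $m_{\Lambda,p}$ up to an auto-equivalence of $\mathrm{Perf}(\Bbbk)$, which for a local system of rank one is just a cohomological shift $[-d(p)]$ for some integer $d(p)$.

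It remains to show that the integer $d(p)$ is the value of a Maslov potential, i.e.\ that $d(p)-d(p')$ agrees with the Maslov potential difference as $p,p'$ vary along $\Lambda$. I would do this by an explicit deformation along a smooth path in $\Lambda$: so long as the front remains smooth, the shift is constant, and when the path crosses a cusp the comparison of the ``$\Gamma_{\varphi\geq 0}$'' functor on the two sides picks up exactly a single shift by $[\pm 1]$ (depending on whether the cusp is up or down), matching the combinatorial definition of the Maslov potential. Thus the shifts assemble into a global function $d:\Lambda\to\mathbb{Z}$ whose difference reproduces the Maslov class; normalising at any one point gives a genuine Maslov potential.

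The main obstacle, and the step that Guillermou addresses in detail, is the last one: verifying that the ambiguity in $m_\Lambda$ is captured exactly by the Maslov potential rather than by some other auxiliary $\mathbb{Z}$-torsor. This requires one to show that the equivalence $m_\Lambda$ is canonical up to a choice of trivialisation of the Maslov bundle on $\Lambda$, and that crossing a cusp changes the ``costalk'' identification by $[\pm 1]$ in the same convention used in the definition of $d$. Once these conventions are aligned the rest of the proposition is essentially a book-keeping consequence of the $\mu hom$ stalk formula and the combinatorial description of sheaves microsupported on a smooth Legendrian strand.
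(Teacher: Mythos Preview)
The paper does not give its own proof of this proposition; it is simply quoted from Guillermou \cite{Gui} (Theorems 7.6(iv), 7.9, 8.10 and Lemma 11.4) as a preliminary fact and used later without justification. So there is no in-paper argument to compare your proposal against.

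Your sketch is a reasonable outline of the content behind those cited results: reduce to the local combinatorial model of Example \ref{combin-model}, use the stalk formula for $\mu hom$ to see that $\mathscr{F}\mapsto \Gamma_{\varphi\geq 0}(\mathscr{F})_x$ gives an equivalence $\mu Sh^b_{\Lambda,p}\simeq \mathrm{Perf}(\Bbbk)$ up to a shift, and then track that shift along $\Lambda$ to identify it with a Maslov potential. This is indeed the shape of Guillermou's argument. The one place where your sketch is genuinely incomplete is the cusp analysis: you assert that crossing a cusp changes the identification by $[\pm 1]$, but making this precise requires Guillermou's careful construction of the Maslov sheaf (his Sections 7--8) and a computation in a specific normal form for the cusp singularity, not just the smooth-front model you invoke. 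Since the paper treats the whole statement as a black box, your level of detail is already more than what the paper itself supplies.
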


\begin{example}\label{microstalk-cone}
    Suppose $\Lambda = \nu^{*,\infty}_{\mathbb{R}^n \times \mathbb{R}_{>0},-}\mathbb{R}^{n+1} \subset T^{*,\infty}\mathbb{R}^{n+1}$ is the inward conormal of $\mathbb{R}^n \times \mathbb{R}_{>0}$ and $\mathscr{F} \in Sh_\Lambda(\mathbb{R}^{n+1})$. Then $\mathscr{F}$ is determined by
    \[\xymatrix{
    F_- & F_+ \ar[l] \ar[r]^\sim & F_+
    }\]
    For $p = (0,\dots,0, 0; 0, \dots, 0, 1) \in \Lambda$ we can pick $\varphi(x) = x_{n+1}$ and get
    $$\Gamma_{\varphi \geq 0}(\mathscr{F})_{(0,\dots,0)} = \mathrm{Cone}(F_+ \rightarrow F_-)[-1] \simeq \mathrm{Tot}(F_+ \rightarrow F_-).$$
    Therefore one can see that the definition of the microstalk coincides with the definition of the microlocal monodromy defined by Shende-Treumann-Zaslow \cite[Section 5.1]{STZ}, and indeed
    $$m_{\Lambda,p}(\mathscr{F}) \simeq \mu mon(\mathscr{F})_p[-1].$$
\end{example}

    Finally we recall the famous Sato's exact triangle, which follows from \cite[Equation (4.3.1)]{KS}. This will be the essential ingredient for the proof of Sato-Sabloff exact triangle in Theorem \ref{exactseq}.

\begin{theorem}[Sato's exact triangle \cite{Guisurvey}*{Equation (1.3.5)}]\label{sato}
    Let $\mathscr{F} \in Sh^b_c(M)$ be a constructible sheaf. Then there is an exact triangle
    $$D'\mathscr{F} \otimes \mathscr{G} \rightarrow Hom(\mathscr{F, G}) \rightarrow \pi_*(\mu hom(\mathscr{F, G})|_{T^{*,\infty}M}) \xrightarrow{+1}.$$
\end{theorem}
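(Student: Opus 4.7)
The plan is to construct the triangle first at the sheaf level on $M$ and then apply $R\Gamma(M,-)$. Specifically, I aim to build
$$D'\mathscr{F} \otimes \mathscr{G} \to \mathscr{H}om(\mathscr{F,G}) \to R(\pi_\infty)_*\bigl(\mu hom(\mathscr{F,G})|_{T^{*,\infty}M}\bigr) \xrightarrow{+1}$$
as a distinguished triangle of sheaves on $M$, where $\pi_\infty : T^{*,\infty}M \to M$ is the bundle projection, and take global sections at the end. Working at the sheaf level is cleaner and conceptually more transparent: in the toy case $\mathscr{F}=\mathscr{G}=\Bbbk_{[a,b]}$ on $M=\mathbb{R}$, for instance, the sheafified triangle reduces to the standard open--closed triangle $\Bbbk_{(a,b)} \to \Bbbk_{[a,b]} \to \Bbbk_{\{a,b\}} \xrightarrow{+1}$, and applying $R\Gamma$ yields the claim (with the shifts from compactly supported cohomology of open intervals producing the correct Euler-characteristic balance).

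The central input is Kashiwara--Schapira's microlocalization functor, which produces a conic sheaf $\mu hom(\mathscr{F,G})$ on $T^*M$ whose restriction to the zero section $i : M \hookrightarrow T^*M$ recovers $\mathscr{H}om(\mathscr{F,G})$. I will apply the open--closed distinguished triangle for the zero section with open complement $j : T^*M\setminus M \hookrightarrow T^*M$,
$$j_! j^{-1}\mu hom(\mathscr{F,G}) \to \mu hom(\mathscr{F,G}) \to i_*\mathscr{H}om(\mathscr{F,G}) \xrightarrow{+1},$$
and push down by $R\pi_*$ along $\pi : T^*M \to M$. Since $\pi \circ i = \mathrm{id}_M$, the right-hand term becomes $\mathscr{H}om(\mathscr{F,G})$. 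For the left-hand term, conic invariance of $\mu hom$ on $T^*M\setminus M \simeq T^{*,\infty}M \times \mathbb{R}_{>0}$ combined with $R\Gamma_c(\mathbb{R}_{>0},\Bbbk)\simeq \Bbbk[-1]$ gives
$$R\pi_*\bigl(j_! j^{-1}\mu hom(\mathscr{F,G})\bigr) \simeq R(\pi_\infty)_*\bigl(\mu hom(\mathscr{F,G})|_{T^{*,\infty}M}\bigr)[-1].$$
Rotating the resulting triangle then produces exactly the desired shape, provided one identifies the remaining term $R\pi_*\mu hom(\mathscr{F,G})$ with $D'\mathscr{F}\otimes \mathscr{G}$.

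The main obstacle is precisely this last identification, $R\pi_*\mu hom(\mathscr{F,G}) \simeq D'\mathscr{F}\otimes\mathscr{G}$ for constructible $\mathscr{F}$, which rests on the specialization/normal-cone formula $\mu hom = \mu_\Delta R\mathscr{H}om(p_2^{-1}\mathscr{F}, p_1^!\mathscr{G})$ together with a direct pushforward computation recovering the tensor--hom pairing between $D'\mathscr{F}$ and $\mathscr{G}$; here one has to carefully track the orientation twists and the interplay between $\Bbbk_M$ and $\omega_M$ in order to avoid an extraneous dualizing factor in the cofiber. Once established, applying $R\Gamma(M,-)$ to the resulting sheaf-level triangle yields the stated claim. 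This is a reformulation of the classical Sato triangle of Kashiwara--Schapira \cite{KS}*{\S4.4}, adapted to the linear-dual convention $D'$ used here, as also presented in \cite{Gui}*{Equation 2.17}.
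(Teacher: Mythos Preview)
The paper does not prove this statement; it is quoted without proof as a standard result from \cite{Gui,Guisurvey}, going back to Kashiwara--Schapira. Your overall plan---decompose $\mu hom$ on $T^*M$ along the zero section and push down to $M$---is the standard one, but two of your three identifications are wrong and the argument collapses.

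First, $R\pi_*\mu hom(\mathscr{F,G})\simeq \mathscr{H}om(\mathscr{F,G})$, \emph{not} $D'\mathscr{F}\otimes\mathscr{G}$; this is precisely \cite{KS}*{Proposition~4.4.2}. Second, your computation of $R\pi_*\bigl(j_!j^{-1}\mu hom\bigr)$ via $R\Gamma_c(\mathbb{R}_{>0},\Bbbk)\simeq\Bbbk[-1]$ confuses $\pi_*$ with $\pi_!$: compactly supported fiber cohomology governs the $!$-pushforward, whereas $R\pi_*$ sees $R\Gamma(\mathbb{R}_{>0},\Bbbk)\simeq\Bbbk$ with no shift. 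In fact, with your chosen triangle $j_!j^{-1}\to\mathrm{id}\to i_*i^{-1}$, both the middle and right-hand terms become $\mathscr{H}om(\mathscr{F,G})$ after $R\pi_*$ and the map between them is an isomorphism (this \emph{is} the conic homotopy lemma behind \cite{KS}*{Proposition~4.4.2}), so the left-hand term vanishes and the triangle degenerates. The fix is to use the other attachment triangle $R\Gamma_M\,\mu hom\to\mu hom\to Rj_*j^{-1}\mu hom$: after $R\pi_*$ the middle term is $\mathscr{H}om(\mathscr{F,G})$, the right-hand term is $R(\pi_\infty)_*\bigl(\mu hom|_{T^{*,\infty}M}\bigr)$ with \emph{no} shift (the $\mathbb{R}_{>0}$-fibers are contractible), and it is the left-hand term $i^!\mu hom(\mathscr{F,G})$ that must be identified with $D'\mathscr{F}\otimes\mathscr{G}$ using constructibility of $\mathscr{F}$.
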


\subsection{Functors for Hamiltonian Isotopies}
    In this section we review the equivalence functor from a Hamiltonian isotopy defined by Guillermou-Kashiwara-Schapira \cite{GKS}.

\begin{definition}
    Let $\widehat H_s\colon T^*M \times I \rightarrow T^*M$ be a homogeneous Hamiltonian on $T^*M$. Then the Lagrangian graph of the homogeneous Hamiltonian is
    $$\mathrm{Graph}_{\widehat H} = \{(x, x', \xi, \xi', s, \sigma) \mid (x', \xi') = \varphi_{\widehat H}^s(x, \xi), \sigma = -\widehat H_s \circ \varphi_{\widehat H}^s(x, \xi)\} \subset T^*(M \times M \times I).$$
    For a conical Lagrangian $\widehat\Lambda$, the Lagrangian movie of $\widehat\Lambda$ under the Hamiltonian is
    $$\widehat\Lambda_{\widehat H} = \{(x, \xi, s, \sigma) \mid (x, \xi) = \varphi_{\widehat H}^s(x_0, \xi_0), \sigma = -\widehat H_s \circ \varphi_{\widehat H}^s(x_0, \xi_0), (x_0, \xi_0) \in \widehat \Lambda\} \subset T^*(M \times I).$$
\end{definition}

    The main theorem of Guillermou-Kashiwara-Schapira is that Hamiltonian isotopies define equivalence functors via convolutions of sheaf kernels in the product, which is called the sheaf quantization of the Hamiltonian. We will call the induced equivalence the sheaf quantization functors.  

\begin{theorem}[Guillermou-Kashiwara-Schapira \cite{GKS}*{Proposition 3.12}]\label{GKS}
    Let $\widehat H_s \colon T^*M \times I \rightarrow T^*M$ be a homogeneous Hamiltonian on $T^*M$ and $\widehat\Lambda$ a conical Lagrangian in $T^*M$. Then there are functors that give equivalences
    $$Sh_{\widehat\Lambda}(M) \xleftarrow{\sim} Sh_{\hat\Lambda_{\hat H}}(M \times I) \xrightarrow{\sim} Sh_{\varphi_{\widehat H}^1(\widehat\Lambda)}(M)$$
    given by restriction functors $i_0^{-1}$ and $i_1^{-1}$ where $i_s: M \times \{s\} \hookrightarrow M \times I$ is the inclusion.
\end{theorem}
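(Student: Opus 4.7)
The plan is to establish this equivalence by sheaf-quantizing the Hamiltonian isotopy through a kernel construction, and then to deduce that the restrictions $i_s^{-1}$ are equivalences using the propagation properties of sheaves with singular support in $\widehat\Lambda_{\widehat H}$. The central object to produce is a kernel $K \in Sh(M \times M \times I)$ satisfying $SS(K) \subset \mathrm{Graph}_{\widehat H}$ away from the zero section, with $K|_{\{s = 0\}} \simeq \Bbbk_\Delta$ for $\Delta \subset M \times M$ the diagonal, and of microlocal rank one along $\mathrm{Graph}_{\widehat H}$. For small $s$, such a $K$ can be written down explicitly via a local generating function for $\varphi_{\widehat H}^s$. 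For arbitrary $s \in I$ one subdivides $I$ into short sub-intervals, constructs local kernels, and composes them via convolution over $M$; the standard singular-support calculus ensures that the assembled $K$ still has singular support contained in $\mathrm{Graph}_{\widehat H}$, since composition of sheaf kernels realizes composition of the underlying Lagrangian correspondences in $T^*(M \times M)$.

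Once $K$ is in hand, I would define a quantization functor $\Psi_H: Sh_{\widehat\Lambda}(M) \to Sh_{\widehat\Lambda_{\widehat H}}(M \times I)$ by convolving with $K$ over $M$. Singular-support calculus (Proposition \ref{ssforhom} together with the proper pushforward and non-characteristic pullback estimates) guarantees that $\Psi_H$ lands in the prescribed subcategory. The identity $K|_{\{s = 0\}} \simeq \Bbbk_\Delta$ immediately gives $i_0^{-1} \circ \Psi_H \simeq \mathrm{id}$ on $Sh_{\widehat\Lambda}(M)$. For the reverse composition, I would check that the inclusion $i_0: M \times \{0\} \hookrightarrow M \times I$ is non-characteristic with respect to every $\mathscr{G} \in Sh_{\widehat\Lambda_{\widehat H}}(M \times I)$: a covector $(x, 0, 0, \sigma) \in \widehat\Lambda_{\widehat H}$ forces $(x, 0) \in \widehat\Lambda$, and by the degree-one homogeneity of $\widehat H$ (which thus vanishes on the zero section) one has $\sigma = 0$. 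The microlocal Morse lemma (Proposition \ref{morselemma}) applied to the projection $M \times I \to I$ then shows that $\mathscr{G}$ is recovered from $i_0^{-1}\mathscr{G}$ by convolution with $K$, establishing $\Psi_H \circ i_0^{-1} \simeq \mathrm{id}$. The same argument at $s = 1$ identifies $i_1^{-1}$ with an equivalence onto $Sh_{\varphi_{\widehat H}^1(\widehat\Lambda)}(M)$.

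The main obstacle is the kernel construction itself. Producing $K$ with both the precise singular-support bound along $\mathrm{Graph}_{\widehat H}$ and the initial condition along the diagonal, while preserving microlocal rank one throughout the time interval, is the central technical content of \cite{GKS}. The short-time case is handled via a generating-function construction, but the patching and composition steps require checking that convolution of simple sheaf kernels along transversally composing Lagrangian correspondences yields a simple object along the composed correspondence, which is a delicate microlocal computation. All of the categorical consequences described above follow formally once such a $K$ has been produced.
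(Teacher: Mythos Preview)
The paper does not actually prove this theorem; it is stated as a citation of Guillermou--Kashiwara--Schapira \cite{GKS}*{Proposition 3.12} and used as a black box throughout. Your proposal is a reasonable sketch of the original GKS argument (kernel quantization of the isotopy, non-characteristic restriction, microlocal Morse propagation), so there is nothing to compare against in this paper.
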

\begin{remark}\label{GKS-trivial}
    For $\Lambda \times I = \{(x, \xi, s, 0) \mid (x, \xi) \in \Lambda, s \in I\} \subset T^{*,\infty}(M \times I)$, we have equivalences of categories $Sh_{\Lambda\times I}(M \times I) \simeq Sh_\Lambda(M)$ given by the restriction $i_s^{-1}$ and its inverse $q^{-1}$ where $i_s: M \times \{s\} \hookrightarrow M \times I$ is the inclusion and $q: M \times I \to M$ is the projection. In particular, we know that $q_*q^{-1} = \mathrm{id}$ \cite[Corollary 1.6]{GKS}.
\end{remark}

\section{Duality and Exact Triangle}\label{dualityexact}

\subsection{Two Sheaf Categories}
    We recall the definitions we made in the introduction and prove some basic properties. As is explained in the introduction, we consider to add an extra $\mathbb{R}$ factor in order to see the Reeb chords. 

\begin{definition}[Definition \ref{translation}]
    Let $q\colon M \times \mathbb{R}^2 \rightarrow M \times \mathbb{R}$ be $q(x, t, u) = (x, t)$ and $r: M \times \mathbb{R}^2 \rightarrow M \times \mathbb{R}$ be $r(x, t, u) = (x, t-u)$. For a Legendrian $\Lambda \subset T^{*,\infty}_{\tau > 0}(M \times \mathbb{R})$, let
    \[\begin{split}
    \Lambda_q &= \{(x, \xi, t, \tau, u, 0) \mid (x, \xi, t, \tau) \in \Lambda\}, \\
    \Lambda_r &= \{(x, \xi, t + u, \tau, u, -\tau) \mid (x, \xi, t, \tau \in \Lambda)\}.
    \end{split}\]
    For a sheaf $\mathscr{F} \in Sh(M \times \mathbb{R})$, we write
    $\mathscr{F}_q = q^{-1}\mathscr{F}, \,\,\, \mathscr{F}_r = r^{-1}\mathscr{F}.$
\end{definition}
\begin{remark}
    For readers who are familiar with Tamarkin categories \cite{Tamarkin1,GS}, we can explain the relation to the geometric construction using the Reeb flow. Let $p\colon M \times \mathbb{R}^2 \rightarrow M \times \mathbb{R}$ be the projection $p(x, t, u) = (x, u)$. Then up to inversion of the $\mathbb{R}$ factor, the internal hom in Tamarkin category is \cite[Equation (45)]{GS} \cite[Definition 3.1]{AsanoIke}
    $$\mathscr{H}om^\star(\mathscr{F, G}) = p_*\mathscr{H}om(q^{-1}\mathscr{F}, r^!\mathscr{G}) = p_*\mathscr{H}om(\mathscr{F}_q, \mathscr{G}_r)[1].$$
\end{remark}

    It is not hard to observe that every intersection point for some $\Lambda$ and Reeb translation $T_{c}(\Lambda)$ where
    $$T_c: T^{*,\infty}_{\tau>0}(M \times \mathbb{R}) \rightarrow T^{*,\infty}_{\tau>0}(M \times \mathbb{R}); \,\,\, (x, \xi, t, \tau) \mapsto (x, \xi, t+c, \tau)$$
    comes from a Reeb chord of $\Lambda$. The following lemma shows that those are all covectors pointing toward $du$ direction (i.e.~in $M \times \mathbb{R}_t \times T^*\mathbb{R}_u$) that lie in the singular support of $\mathscr{H}om(\mathscr{F}_q, \mathscr{G}_r)$.

\begin{lemma}\label{reebchord-hom}
    For $\Lambda \subset T^{*,\infty}_{\tau > 0}(M \times \mathbb{R})$ and $\mathscr{F, G} \in Sh_\Lambda(M \times \mathbb{R})$,
    $$SS^\infty(\mathscr{H}om(\mathscr{F}_q, \mathscr{G}_r)) \cap \mathrm{Graph}(du) = \varnothing.$$
    On the other hand, there is an injection from
    $$SS^\infty(\mathscr{H}om(\mathscr{F}_q, \mathscr{G}_r)) \cap \mathrm{Graph}(-du)$$
    to the set of directed Reeb chords (trajectories of the Reeb flow for some positive or negative time $u$) $\mathcal{Q}_\pm(\Lambda) = \{\gamma\colon [0,u] \rightarrow T^{*,\infty}_{\tau>0}(M \times \mathbb{R}) \mid \gamma(s) = (x, \xi, t+s, \tau),\,\gamma(0), \gamma(u) \in \Lambda\}$.
\end{lemma}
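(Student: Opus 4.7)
First, I would explicitly compute the singular supports of $\mathscr{F}_q$ and $\mathscr{G}_r$ by applying the pullback formula under the submersions $q, r$. Since $q^*(\xi\,dx+\tau\,dt)=\xi\,dx+\tau\,dt$ and $r^*(\xi\,dx+\tau\,dt)=\xi\,dx+\tau\,dt-\tau\,du$, one reads off
$$SS(\mathscr{F}_q)=\{(x,t,u;\,\xi,\tau,0):(x,t;\,\xi,\tau)\in SS(\mathscr{F})\},$$
$$SS(\mathscr{G}_r)=\{(x,t,u;\,\xi,\tau,-\tau):(x,t-u;\,\xi,\tau)\in SS(\mathscr{G})\}.$$
Off the zero section $\tau>0$ because $\Lambda\subset T^{*,\infty}_{\tau>0}(M\times\mathbb{R})$, so the $u$-components ($0$ versus $-\tau<0$) disagree and $SS(\mathscr{F}_q)\cap SS(\mathscr{G}_r)\subset 0_{M\times\mathbb{R}^2}$. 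Consequently Proposition~\ref{ssforhom} applies and gives
$$SS\bigl(\mathscr{H}om(\mathscr{F}_q,\mathscr{G}_r)\bigr)\subset -SS(\mathscr{F}_q)+SS(\mathscr{G}_r),$$
whose nonzero elements take the form $\bigl(x,t,u;\,\xi'-\xi,\tau'-\tau,-\tau'\bigr)$ with $(x,\xi,t,\tau),(x,\xi',t-u,\tau')\in\widehat\Lambda$.

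The first claim is then immediate: the last component $-\tau'\leq 0$ for every such element, so no positive multiple of $du$ can appear in $SS^\infty$, and $SS^\infty(\mathscr{H}om(\mathscr{F}_q,\mathscr{G}_r))\cap\mathrm{Graph}(du)=\emptyset$. For the second claim, a point of $SS^\infty\cap\mathrm{Graph}(-du)$ lying over $(x_0,t_0,u_0)$ has covector direction $(0,0,-1)$, which in the description above forces $\xi=\xi'$ and $\tau=\tau'>0$. Hence $(x_0,\xi,t_0,\tau)$ and $(x_0,\xi,t_0-u_0,\tau)$ both lie in $\widehat\Lambda$, and after normalizing $|\xi|^2+\tau^2=1$ one obtains an ordered Reeb chord $\gamma\colon[0,u_0]\to T^{*,\infty}_{\tau>0}(M\times\mathbb{R})$ defined by $\gamma(s)=(x_0,\xi,(t_0-u_0)+s,\tau)$. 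I would then define the desired map by assigning one such chord to each SS point; since distinct base points $(x_0,t_0,u_0)\in M\times\mathbb{R}^2$ yield distinct endpoint pairs $(x_0,t_0-u_0),(x_0,t_0)$ in $M\times\mathbb{R}$, they produce distinct Reeb chords, so the map is injective regardless of how the choice is made.

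The main obstacle is not analytical but combinatorial: one must carefully verify the transversality hypothesis of Proposition~\ref{ssforhom}, and this is exactly where the assumption $\Lambda\subset T^{*,\infty}_{\tau>0}$ is essential. If $\widehat\Lambda$ were allowed to contain directions with $\tau=0$, the argument would collapse because $SS(\mathscr{F}_q)$ and $SS(\mathscr{G}_r)$ could meet away from the zero section (their $u$-components could both vanish), forcing a much weaker singular support bound on $\mathscr{H}om$. Everything else reduces to routine bookkeeping with pullbacks of covectors.
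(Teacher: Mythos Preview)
Your proposal is correct and follows essentially the same route as the paper: compute $SS(\mathscr{F}_q)$ and $SS(\mathscr{G}_r)$ via the pullback formula, observe that the $\tau>0$ condition forces their intersection into the zero section so that Proposition~\ref{ssforhom} applies, and then read off both claims from the shape of the elements of $-SS(\mathscr{F}_q)+SS(\mathscr{G}_r)$. Your treatment of injectivity (distinct base points give distinct endpoint pairs, hence distinct chords, regardless of the choice of $(\xi,\tau)$) is in fact slightly more explicit than the paper's.
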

\begin{proof}
    Since $SS^\infty(\mathscr{F}_q) \cap SS^\infty(\mathscr{F}_r) = \Lambda_q \cap \Lambda_r = \varnothing$, we can apply the singular support estimate Proposition \ref{ssforhom}
    $$SS^\infty(\mathscr{H}om(\mathscr{F}_q, \mathscr{G}_r)) \subset (-SS^\infty(\mathscr{F}_q)) + SS^\infty(\mathscr{G}_r) = (-\Lambda_q) + \Lambda_r.$$
    Hence $(x, 0, t, 0, u, \nu) \in (-\Lambda_q) + \Lambda_r$ iff there exists a pair $(x, \xi, t, \tau), (x, \xi, t+u, \tau) \in \Lambda$, or equivalently there is a (directed) Reeb chord from $(x, \xi, t, \tau)$ to $(x, \xi, t+u, \tau) \in \Lambda$ of length $u$. Moreover, we know that $\nu = -\tau < 0$ is determined by such a pair. Hence when $\nu > 0$, there will never be $(x, 0, t, 0, u, \nu) \in (-\Lambda_q) + \Lambda_r$. Therefore
    \begin{gather*}
    SS^\infty(\mathscr{H}om(\mathscr{F}_q, \mathscr{G}_r )) \cap \mathrm{Graph}(du) = \varnothing, \\
    SS^\infty(\mathscr{H}om(\mathscr{F}_q, \mathscr{G}_r)) \cap \mathrm{Graph}(-du) \hookrightarrow \mathcal{Q}_\pm(\Lambda),
    \end{gather*}
    where the injection sends $(x, 0, t, 0, u, -\tau)$ to the (directed) Reeb chord from $(x, \xi, t, \tau)$ to $(x, \xi, t+u, \tau) \in \Lambda$.
\end{proof}

\begin{figure}
  \centering
  \includegraphics[width=0.8\textwidth]{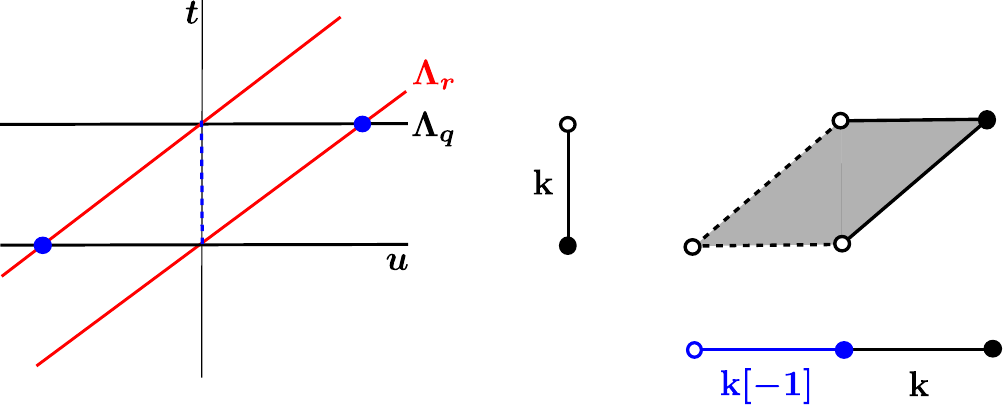}\\
  \caption{When $M$ is a point, $\Lambda \subset \mathbb{R}$ consists of two points $0$ and $1$, the front of the Legendrians $\Lambda_q$ and $\Lambda_r$ are shown on the left. For $\mathscr{F} = \Bbbk_{[0,1)}$, the sheaf $\mathscr{H}om(\mathscr{F}_q, \mathscr{F}_r)$ and its projection $u_*\mathscr{H}om(\mathscr{F}_q, \mathscr{F}_r)$ are shown on the right. The blue points are coming from the Reeb chord corresponding to the dashed blue line.}\label{reebexample}
\end{figure}

    The following corollary produces an acyclic complex, which will be used to deduce Sabloff duality. The reader may compare it to the acyclic complex produced in generating family (co)homology \cite[Section 3.1]{Genfamily}.

\begin{corollary}\label{acyclic}
    For $\Lambda \subset T^{*,\infty}_{\tau > 0}(M \times \mathbb{R})$ and $\mathscr{F, G} \in Sh_\Lambda^b(M \times \mathbb{R})$ with perfect stalks and compact supports,
    $$\Gamma(M \times \mathbb{R}^2, \mathscr{H}om(\mathscr{F}_q, \mathscr{G}_r)) \simeq 0.$$
\end{corollary}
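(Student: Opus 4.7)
The plan is to apply the microlocal Morse lemma (Proposition \ref{morselemma}) to the projection $u: M \times \mathbb{R}^2 \to \mathbb{R}$ acting on the sheaf $\mathscr{H} = \mathscr{H}om(\mathscr{F}_q, \mathscr{G}_r)$, and to show that as we sweep the level $b$ from $+\infty$ down to $-\infty$, the global sections never change and eventually become zero. This requires two inputs: (i) properness of $u$ on $\mathrm{supp}(\mathscr{H})$, and (ii) the absence of covectors of the form $c\,du$ (with $c>0$) in $SS(\mathscr{H})$.

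For (i), I would first observe that $\mathrm{supp}(\mathscr{H}) \subset \mathrm{supp}(\mathscr{F}_q) \cap \mathrm{supp}(\mathscr{G}_r)$, since at any point where either factor vanishes the internal Hom also vanishes. A point $(x,t,u)$ in the intersection must satisfy $(x,t) \in \mathrm{supp}(\mathscr{F})$ and $(x,t-u) \in \mathrm{supp}(\mathscr{G})$. Because both $\mathrm{supp}(\mathscr{F})$ and $\mathrm{supp}(\mathscr{G})$ are assumed compact, the coordinates $x$, $t$, and $t-u$ are all bounded, hence $u$ is bounded as well. Thus $\mathrm{supp}(\mathscr{H})$ sits inside a compact set of $M \times \mathbb{R}^2$, so $u$ is proper on it, and moreover there exist $a_0 \ll 0 \ll b_0$ with $\mathrm{supp}(\mathscr{H}) \subset u^{-1}((a_0, b_0))$.

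For (ii), Lemma \ref{reebchord-hom} gives $SS^\infty(\mathscr{H}) \cap \mathrm{Graph}(du) = \emptyset$. Since $SS(\mathscr{H})$ is a closed $\mathbb{R}_{>0}$-conic subset of $T^*(M \times \mathbb{R}^2)$, the absence of the unit covector $du$ from $SS^\infty(\mathscr{H})$ is equivalent to the absence of every positive scalar multiple $c\,du$ ($c>0$) from $SS(\mathscr{H})$. In particular, at each point of $u^{-1}([a_0, b_0))$ the differential $du$ is not in $SS(\mathscr{H})$.

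Combining (i) and (ii), the microlocal Morse lemma applies on the whole interval $[a_0, b_0)$ and gives
\[
\Gamma(M \times \mathbb{R}^2, \mathscr{H}) \simeq \Gamma(u^{-1}((-\infty, b_0)), \mathscr{H}) \simeq \Gamma(u^{-1}((-\infty, a_0)), \mathscr{H}).
\]
Since $u^{-1}((-\infty, a_0))$ is disjoint from $\mathrm{supp}(\mathscr{H})$, the rightmost term vanishes, yielding the corollary. The only mildly subtle point is the conicity argument that upgrades the vanishing of $SS^\infty$ along $du$ to vanishing of $SS$ along the whole ray $\mathbb{R}_{>0}\,du$, but this is immediate from the standard convention that singular support is conic and does not include the zero section in its ``nontrivial'' part.
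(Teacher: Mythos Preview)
Your proposal is correct and follows essentially the same route as the paper: both argue that $u$ is proper on the support of $\mathscr{H}om(\mathscr{F}_q,\mathscr{G}_r)$, invoke Lemma~\ref{reebchord-hom} to rule out $du$ from the singular support, and then apply the microlocal Morse lemma to push the sections down to an empty region. The only cosmetic difference is that the paper first rewrites $\mathscr{H}om(\mathscr{F}_q,\mathscr{G}_r)\simeq D'\mathscr{F}_q\otimes\mathscr{G}_r$ (via Proposition~\ref{ssforhom}) before bounding the support as $q^{-1}(\mathrm{supp}(\mathscr{F}))\cap r^{-1}(\mathrm{supp}(\mathscr{G}))$, whereas you obtain the same bound directly from the vanishing of the internal $\mathscr{H}om$ outside $\mathrm{supp}(\mathscr{F}_q)\cap\mathrm{supp}(\mathscr{G}_r)$; both are valid and yield the same conclusion.
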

\begin{proof}
    Since $SS^\infty(\mathscr{F}_q) \cap SS^\infty(\mathscr{G}_r) = \Lambda_q \cap \Lambda_r = \emptyset$, by Proposition \ref{ssforhom}
    $$\mathscr{H}om(\mathscr{F}_q, \mathscr{G}_r) \simeq D'\mathscr{F}_q \otimes \mathscr{G}_r.$$
    Since $\mathrm{supp}(\mathscr{F}), \mathrm{supp}(\mathscr{G})$ are compact, we know that when $c$ is sufficiently large, $\Lambda \cap T_{\pm c}(\Lambda) = \varnothing$. Hence for large $c > 0$,
    $$\mathrm{supp}(D'\mathscr{F}_q \otimes \mathscr{G}_r) \subset q^{-1}(\mathrm{supp}(\mathscr{F})) \cap r^{-1}(\mathrm{supp}(\mathscr{G})) \subset M \times [-c, c]^2.$$
    Therefore consider the function $\varphi_+(x,t,u) = u$, $\varphi_+|_{\mathrm{supp}(R\mathscr{H}om(\mathscr{F}_q, \mathscr{G}_r))}$ is proper and
    $$SS(\mathscr{H}om(\mathscr{F}_q, \mathscr{G}_r)) \cap \mathrm{Graph}(d\varphi_+) = \varnothing.$$
    One can apply microlocal Morse lemma \ref{morselemma} and see that
    $$\Gamma(M \times \mathbb{R}^2, \mathscr{H}om(\mathscr{F}_q, \mathscr{G}_r)) \simeq \Gamma(M \times \mathbb{R} \times (-\infty, -c), \mathscr{H}om(\mathscr{F}_q, \mathscr{G}_r)) = 0.$$
    This completes the proof.
\end{proof}

    Similar to the case in Legendrian contact homology, where people defines two $\mathcal{A}_\infty$-categories $\mathcal{A}ug_-$ and $\mathcal{A}ug_+$, here we also define two dg categories of sheaves. The idea comes from the definition of the generating family cohomology.

    From now on, the projection $M \times \mathbb{R}^2, \,(x, t, u) \mapsto u$ will be denoted by $u$.

\begin{definition}[Definition \ref{defhom}]
    For $\Lambda \subset T^{*,\infty}_{\tau > 0}(M \times \mathbb{R})$ and $\mathscr{F, G} \in Sh_\Lambda(M \times \mathbb{R})$, let
    \[\begin{split}
    Hom_-(\mathscr{F, G}) = \Gamma(u^{-1}([0,+\infty)), Hom(\mathscr{F}_q, \mathscr{G}_r)),\\
    Hom_+(\mathscr{F, G}) = \Gamma(u^{-1}((0,+\infty)), Hom(\mathscr{F}_q, \mathscr{G}_r)).
    \end{split}\]
\end{definition}

\begin{example}
    Let $M$ be a point, $\Lambda \subset \mathbb{R}$ consists of two points $0$ and $1$ (see Figure \ref{reebexample}). For $\mathscr{F} = \Bbbk_{[0,1)}$, the sheaf
    $$u_*\mathscr{H}om(\mathscr{F}_q, \mathscr{F}_r) \simeq \Bbbk_{(-1,0]}[-1] \oplus \Bbbk_{(0, 1]}.$$
    Therefore as the projection $u: \mathbb{R}^2 \rightarrow \mathbb{R}$ is proper on $\mathrm{supp}(\mathscr{H}om(\mathscr{F}_q, \mathscr{F}_r))$, we have
    \[\begin{split}
    Hom_-(\mathscr{F, F}) = \Gamma([0,+\infty), \Bbbk_0[-1]& \oplus \Bbbk_{(0,1]}) = \Bbbk[-1],\\
    Hom_+(\mathscr{F, F}) = \Gamma((0,+\infty), &\Bbbk_{(0,1]}) = \Bbbk.
    \end{split}\]
\end{example}

\begin{theorem}[Theorem \ref{computehom}]\label{computehom2}
    For a Legendrian $\Lambda \subset T^{*,\infty}_{\tau > 0}(M \times \mathbb{R})$ and sheaves $\mathscr{F, G} \in Sh^b_\Lambda(M \times \mathbb{R})$ with perfect stalks and compact supports,
    $$Hom_-(\mathscr{F, G}) \simeq \Gamma(D'\mathscr{F} \otimes \mathscr{G}), \,\,\, Hom_+(\mathscr{F, G}) \simeq Hom(\mathscr{F, G}).$$
\end{theorem}

    Now we prove the theorem. The first part
    $$\Gamma(u^{-1}((0,+\infty)), Hom(\mathscr{F}_q, \mathscr{G}_r)) \simeq Hom(\mathscr{F}, \mathscr{G})$$
    is essentially due to Guillermou \cite[Corollary 12.3.5]{Guisurvey}. A more general version can be found in \cite[Proposition 2.9]{Zhou}. Here we adapt the proof of Jin-Treumann \cite[Proposition 3.16]{JinTreu}.

\begin{proof}[Proof of Theorem \ref{computehom2} part 1: $Hom_+(\mathscr{F, G}) \simeq Hom(\mathscr{F}, \mathscr{G})$]
    As in the proof of Corollary \ref{acyclic}, we choose the function $\varphi_+(x,t,u) = u$. Then by microlocal Morse lemma Proposition \ref{morselemma},
    $$\Gamma(M \times \mathbb{R} \times (0, +\infty), \mathscr{H}om(\mathscr{F}_q, \mathscr{G}_r)) \simeq \varprojlim\nolimits_{c\to 0}\Gamma(M \times \mathbb{R} \times (0, c), \mathscr{H}om(\mathscr{F}_q, \mathscr{G}_r)).$$
    Now it suffices to show that
    $$\varprojlim\nolimits_{c\to 0}\Gamma(M \times \mathbb{R} \times (0, c), \mathscr{H}om(\mathscr{F}_q, \mathscr{G}_r)) \simeq Hom(\mathscr{F, G}).$$
    This follows from Guillermou's result which we now recall.  Let $c_0$ be the shortest length of Reeb chords in $\mathcal{Q}(\Lambda)$. When $0 < c < c_0$, there are no intersection points between $\Lambda$ and $T_c(\Lambda)$. Hence $(\Lambda_q \cup \Lambda_r) \cap T^{*,\infty}(M \times \mathbb{R} \times (0, c))$ is the movie of a Legendrian isotopy, which extends to a contact Hamiltonian isotopy. By Guillermou-Kashiwara-Schapira's Theorem \ref{GKS}, we know for any $0< c < c_0$
    \[
    \varprojlim\nolimits_{c>0}\Gamma(M \times \mathbb{R} \times (0, c), \mathscr{H}om(\mathscr{F}_q, \mathscr{G}_r)) 
    \simeq Hom\big(\mathscr{F}, \varprojlim\nolimits_{c>0}T_{c*}\mathscr{G}\big).
    \]
    Here $T_c: M \times \mathbb{R} \rightarrow M \times \mathbb{R}$ is the vertical translation (by abuse of notations). Note that since $SS^\infty(\mathscr{F}), SS^\infty(\mathscr{G}) \subset T^{*,\infty}_{\tau > 0}(M \times \mathbb{R})$, by microlocal cutoff lemma \ref{cutoff}
    $$\mathscr{G} \simeq \Bbbk_{[0,+\infty)} \star \mathscr{G} = s_*(\pi_{\mathbb{R}}^{-1}\Bbbk_{[0,+\infty)} \otimes \pi_{M \times \mathbb{R}}^{-1}\mathscr{G}),$$
    where $s\colon M \times \mathbb{R}^2 \rightarrow M \times \mathbb{R}, \, (x, t_1, t_2) \mapsto (x, t_1+t_2)$. By elementary computation we know
    $$T_{c*}\mathscr{G} \simeq \Bbbk_{[c,+\infty)} \star \mathscr{G},$$
    and the map $\mathscr{G} \rightarrow T_{c*}\mathscr{G}$ is induced by $\Bbbk_{[0,+\infty)} \rightarrow \Bbbk_{[c,+\infty)}$. Since $\Bbbk_{[0,+\infty)} \xrightarrow{\sim} \varprojlim_{c > 0}\Bbbk_{[c,+\infty)}$, and the push-forward functor commutes with limits, we can conclude that
    $$Hom\big(\mathscr{F}, \varprojlim\nolimits_{c>0}T_{c*}\mathscr{G}\big) \simeq Hom(\mathscr{F, G}).$$
    This proves the assertion.
\end{proof}

    For the second part of the theorem, we will need to use the fact that $\mathscr{H}om(\mathscr{F}_q, \mathscr{G}_r) \simeq D'\mathscr{F}_q \otimes \mathscr{G}_r$ in order to relate $\mathscr{H}om(\mathscr{F}_q, \mathscr{G}_r)$ with $D'\mathscr{F} \otimes \mathscr{G}$.

\begin{proof}[Proof of Theorem \ref{computehom2} part 2: $Hom_-(\mathscr{F, G}) \simeq \Gamma(D'\mathscr{F} \otimes \mathscr{G})$]
    Choose $\varphi_-(x, t, u) = -u$. By microlocal Morse lemma Proposition \ref{morselemma}, 
    \[\begin{split}
    \Gamma(u^{-1}([0,+\infty)), \mathscr{H}om(\mathscr{F}_q \otimes \mathscr{G}_r)) \simeq \Gamma(u^{-1}(0), \mathscr{H}om(\mathscr{F}_q, \mathscr{G}_r)).
    \end{split}\]
    Since $\Lambda_q \cap \Lambda_r = \varnothing$, by Proposition \ref{ssforhom}, we have $\mathscr{H}om(\mathscr{F}_q, \mathscr{G}_r) = D'\mathscr{F}_q \otimes \mathscr{G}_r$. Since the pull-back functor commutes with tensor product, we have
    \[\begin{split}
    \Gamma(u^{-1}(0), D'\mathscr{F}_q \otimes \mathscr{G}_r) &\simeq \Gamma(u^{-1}(0), i_{u=0}^{-1}(D'\mathscr{F}_q \otimes \mathscr{G}_r)) \\
    &\simeq \Gamma(u^{-1}(0), i_{u=0}^{-1}(D'\mathscr{F}_q) \otimes i_{u=0}^{-1}\mathscr{G}_r),
    \end{split}\]
    where $i_{u=0}: u^{-1}(0) \hookrightarrow M \times \mathbb{R}^2$ is the inclusion. In addition, since $q: M \times \mathbb{R}^2 \to M \times \mathbb{R}$ has contractible fiber,
    $$q_*D'\mathscr{F}_q \simeq q_*\mathscr{H}om(q^{-1}\mathscr{F}, \Bbbk_{M \times \mathbb{R}^2}) = \mathscr{H}om(\mathscr{F}, \Bbbk_{M \times \mathbb{R}}) = D'\mathscr{F}.$$ 
    Since $SS^\infty(D'\mathscr{F}_q) \subset -\Lambda_q$, we know by Guillermou-Kashiwara-Schapira Theorem \ref{GKS} and Remark \ref{GKS-trivial} that $q^{-1}q_*D'\mathscr{F}_q = D'\mathscr{F}_q$ and hence
    $i_{u=0}^{-1}(D'\mathscr{F}_q) \simeq D'\mathscr{F}.$
    Therefore, we can conclude that
    $$\Gamma(u^{-1}([0,+\infty)), \mathscr{H}om(\mathscr{F}_q, \mathscr{G}_r)) \simeq \Gamma(i_{u=0}^{-1}D'\mathscr{F}_q \otimes  i_{u=0}^{-1}\mathscr{G}_r) \simeq \Gamma(D'\mathscr{F} \otimes \mathscr{G}),
    $$
    The proof is thus completed.
\end{proof}

\begin{remark}
    The reason $Hom(\mathscr{F, G}) \not\simeq Hom_-(\mathscr{F, G})$ is that for the homomorphism
    $$i_{u=0}^{-1}\mathscr{H}om(\mathscr{F}_q, \mathscr{G}_r) \neq \mathscr{H}om(i_{u=0}^{-1}\mathscr{F}_q, i_{u=0}^{-1}\mathscr{G}_r).$$
    (Using the language in Nadler-Shende \cite[Section 2]{NadShen}, this is because the gapped condition fails for $\Lambda_r$ and $\Lambda_q$ as there exist Reeb chords whose lengths shrink to zero when $u \rightarrow 0$.) However, for tensor products we can easily get
    $$i_{u=0}^{-1}(D'\mathscr{F}_q \otimes \mathscr{G}_r) \simeq i_{u=0}^{-1}(D'\mathscr{F}_q) \otimes i_{u=0}^{-1}\mathscr{G}_r.$$
\end{remark}

\subsection{Duality and Exact Triangle}
    Now we are able to prove Theorem \ref{duality} and \ref{exactseq}.

\begin{theorem}[Sabloff Duality; Theorem \ref{duality}]\label{duality2}
    Let $M$ be orientable. For $\Lambda \subset T^{*,\infty}_{\tau > 0}(M \times \mathbb{R})$ and $\mathscr{F, G} \in Sh^b_\Lambda(M \times \mathbb{R})$ sheaves with perfect stalks and compact supports,
    $$Hom_+(\mathscr{F}_q, \mathscr{G}_r) \simeq D'Hom_-(\mathscr{G}_q, \mathscr{F}_r)[-n-1].$$
\end{theorem}

    Before proving the theorem we use the acyclic complex obtained in Corollary \ref{acyclic} to get a partial duality result. Again one may compare the result with the analogous ones in generating families.

\begin{proposition}\label{positive-negative}
    For $\Lambda \subset T^{*,\infty}_{\tau > 0}(M \times \mathbb{R})$ and $\mathscr{F, G} \in Sh^b_\Lambda(M \times \mathbb{R})$ sheaves with compact supports,
    $$Hom_+(\mathscr{F}_q, \mathscr{G}_r) \simeq Hom(\Bbbk_{u\leq 0}, \mathscr{H}om(\mathscr{F}_q, \mathscr{G}_r))[1].$$
\end{proposition}
\begin{proof}
    Consider the exact triangle
    $$\Gamma_{u\leq 0}(\mathscr{H}om(\mathscr{F}_r, \mathscr{G}_q)) \rightarrow \mathscr{H}om(\mathscr{F}_r, \mathscr{G}_q) \rightarrow i_{u>0 *}i_{u>0}^{-1} \mathscr{H}om(\mathscr{F}_r, \mathscr{G}_q) \xrightarrow{+1}.$$
    where $i_{u>0}: u^{-1}((0, +\infty)) \hookrightarrow M \times \mathbb{R}^2$ is the inclusion. We have $\Gamma(M \times \mathbb{R}^2, \mathscr{H}om(\mathscr{F}_r, \mathscr{G}_q)) \simeq 0$ by Corollary \ref{acyclic}. Therefore the assertion follows.
\end{proof}

\begin{proof}[Proof of Theorem \ref{duality2}]
    Since $\mathscr{F}_r, \mathscr{G}_q$ are constructible, by Propositions \ref{construct} we have
    \[\begin{split}
    Hom_+(\mathscr{F}_q, \mathscr{G}_r)  &\simeq Hom(\Bbbk_{u\leq 0}, \mathscr{H}om(\mathscr{F}_q, \mathscr{G}_r))[1] \\
    &\simeq Hom(\Bbbk_{u\leq 0}, D'(D'\mathscr{G}_r \otimes \mathscr{F}_q))[1] \\
    &\simeq Hom(\Bbbk_{u\leq 0} \otimes (D'\mathscr{G}_r \otimes \mathscr{F}_q), \Bbbk_{M \times \mathbb{R}^2})[1]
    \end{split}\]
    Note that $M$ is orientable with dimension $n$, we have $\omega_{M \times \mathbb{R}^2} = \Bbbk_{M \times \mathbb{R}^2}[-n-2]$. Since $SS^\infty(\mathscr{F}_q) \cap SS^\infty(\mathscr{G}_r) = \emptyset$, we know $D'\mathscr{G}_r \otimes \mathscr{F}_q \simeq \mathscr{H}om(\mathscr{G}_r, \mathscr{F}_q)$. Hence
    \[\begin{split}
    Hom_+(\mathscr{F}_q, \mathscr{G}_r)  &\simeq Hom(\Bbbk_{u\leq 0} \otimes D'\mathscr{G}_r \otimes \mathscr{F}_q, \omega_{M \times \mathbb{R}^2})[-n-1] \\
    &\simeq Hom(\Gamma(\Bbbk_{u\leq 0} \otimes D'\mathscr{G}_r \otimes \mathscr{F}_q), \Bbbk)[-n-1] \\
    &\simeq D'\Gamma(u^{-1}((-\infty,0]), D'\mathscr{G}_r \otimes \mathscr{F}_q)[-n-1].
    \end{split}\]
    The second last identity follows from the fact that the sheaf is compactly supported.

    Consider the diffeomorphism $\phi\colon M \times \mathbb{R}^2 \to M \times \mathbb{R}^2, (x, t, u) \mapsto (x, t-u, -u)$. Since $q = r \circ \phi$ and $r = q \circ \phi$, one can see that $\phi^{-1}\mathscr{F}_r = \mathscr{F}_q$, $\phi^{-1}\mathscr{G}_q = \mathscr{G}_r$, and
    $$\Gamma(u^{-1}((-\infty,0]), \mathscr{H}om(\mathscr{G}_r, \mathscr{F}_q)) \simeq \Gamma(u^{-1}([0,+\infty)), \mathscr{H}om(\mathscr{G}_q, \mathscr{F}_r)).$$
    This completes the proof.
\end{proof}

    Now we prove Thoerem \ref{exactseq}. The main ingredient is Sato's exact triangle Theorem \ref{sato}.

\begin{theorem}[Theorem \ref{exactseq}; Sabloff-Sato exact triangle]\label{exactseq2}
    Let $\Lambda \subset T^{*,\infty}_{\tau > 0}(M \times \mathbb{R})$ be a closed Legendrian, and $\mathscr{F, G} \in Sh^b_\Lambda(M \times \mathbb{R})$ be sheaves with perfect stalks and compact supports, we have an exact triangle
    $$Hom_-(\mathscr{F}, \mathscr{G}) \rightarrow Hom_+(\mathscr{F}, \mathscr{G}) \rightarrow \Gamma(\Lambda; \mu hom(\mathscr{F, G})) \xrightarrow{+1}.$$
    In particular, when the Maslov class and relative second Stiefel-Whitney class of $\Lambda$ vanish and $\mathscr{F} = \mathscr{G}$ are furthermore microlocal rank $r$ sheaves, we have
    $$Hom_-(\mathscr{F}, \mathscr{F}) \rightarrow Hom_+(\mathscr{F}, \mathscr{F}) \rightarrow C^*(\Lambda; \Bbbk^{r^2}) \xrightarrow{+1}.$$
\end{theorem}
\begin{proof}
    Consider Sato's exact triangle Theorem \ref{sato}
    $$D'\mathscr{F} \otimes \mathscr{G} \rightarrow \mathscr{H}om(\mathscr{F, G}) \rightarrow \pi_*(\mu hom(\mathscr{F, G})|_{T^{*,\infty}(M \times \mathbb{R})}) \xrightarrow{+1},$$
    where $\pi: T^{*,\infty}(M \times \mathbb{R}) \rightarrow M\times \mathbb{R}$ is the projection. Therefore, taking global sections gives the following exact triangle
    $$\Gamma(D'\mathscr{F} \otimes \mathscr{G}) \rightarrow Hom(\mathscr{F, G}) \rightarrow C^*(\Lambda; \mu hom(\mathscr{F, G})) \xrightarrow{+1}.$$

    To show the assertion of the theorem, we claim that there is a commutative diagram of exact triangles where vertical arrows are all quasi-isomorphisms given by Theorem \ref{computehom}
    \[\xymatrix{
    \Gamma(D'\mathscr{F} \otimes \mathscr{G}) \ar[r] & Hom(\mathscr{F, G}) \\
    \Gamma(u^{-1}([0,+\infty)), \mathscr{H}om(\mathscr{F}_q, \mathscr{G}_r)) \ar[u] \ar[r] & \Gamma(u^{-1}((0,+\infty)), \mathscr{H}om_+(\mathscr{F}_q, \mathscr{G}_r)) \ar[u].
    }\]
    Since $SS^\infty(\mathscr{F}_q) \cap SS^\infty(\mathscr{G}_r) = \varnothing$ and $\mathscr{F}_q, \mathscr{G}_r$ are constructible sheaves, we know that $\mathscr{H}om_+(\mathscr{F}_q, \mathscr{G}_r) \simeq D'\mathscr{F}_q \otimes \mathscr{G}_r$. Then we can rewrite the commutative diagram as
    \[\xymatrix{
    \Gamma(D'\mathscr{F} \otimes \mathscr{G}) \ar[r] & Hom\big(\mathscr{F}, \varprojlim\nolimits_{c>0}T_{c*}\mathscr{G}\big)\\
    \Gamma(u^{-1}([0,+\infty)), D'\mathscr{F}_q \otimes \mathscr{G}_r) \ar[u] \ar[r] & \Gamma(u^{-1}((0,+\infty)), \mathscr{H}om(\mathscr{F}_q, \mathscr{G}_r)) \ar[u],
    }\]
    where the left vertical map is induced by the restriction at $u^{-1}(0)$, and the right vertical map is induced by the restriction at $u^{-1}(c)$ as $c \to 0^+$. The above diagram can be decomposed into two commutative diagrams. First, we have a commutative diagram
    \[\xymatrix{
    \Gamma(D'\mathscr{F} \otimes \mathscr{G}) \ar[r] & Hom\big(\mathscr{F}, \varprojlim\nolimits_{c>0}T_{c*}\mathscr{G}\big)\\
    \Gamma(u^{-1}([0,+\infty)), D'\mathscr{F}_q \otimes \mathscr{G}_r) \ar[u] \ar[r] & \Gamma(u^{-1}([0,+\infty)), \mathscr{H}om(\mathscr{F}_q, \mathscr{G}_r)) \ar[u].
    }\]
    Since both vertical maps are induced by the restriction at $u_{-1}(0)$, and both horizontal maps are induced by the natural transformation $D'\mathscr{F} \otimes \mathscr{G} \to \mathscr{H}om(\mathscr{F}, \mathscr{G})$, this diagram commutes. Second, we have another commutative diagram
    \[\xymatrix{
    Hom(\mathscr{F}, \mathscr{G}) \ar[r] & Hom\big(\mathscr{F}, \varprojlim\nolimits_{c>0}T_{c*}\mathscr{G}\big)\\
    \Gamma(u^{-1}([0,+\infty)), \mathscr{H}om(\mathscr{F}_q, \mathscr{G}_r)) \ar[u] \ar[r] & \Gamma(u^{-1}((0,+\infty)), \mathscr{H}om(\mathscr{F}_q, \mathscr{G}_r)) \ar[u].
    }\]
    Here, the vertical maps are induced by the restrictions. Since the top horizontal map is induced the identification of sections on $u^{-1}(0)$ with $u^{-1}([0, c])$ using microlocal Morse lemma and then the restriction from $u^{-1}([0, c])$ to $u^{-1}(c)$, and the bottom horizontal map is induced by the restriction $u^{-1}([0,+\infty))$ to $u^{-1}([c, +\infty))$, this diagram also commutes.

    Finally, when $\mathscr{F} \simeq \mathscr{G}$ are microlocal rank $r$ sheaves and the Maslov class and relative second Stiefel-Whitney class vanish if $\Bbbk \neq \mathbb{Z}/2\mathbb{Z}$, and by Guillermou's Theorem \ref{Gui},
    $$\Gamma(\Lambda; \mu hom(\mathscr{F, F})) \simeq C^*(\Lambda; \Bbbk^{r^2}).$$
    This completes the proof.
\end{proof}

    The following corollary can be viewed as a version of degeneration to Morse flow trees in Legendrian contact homology (that certain pseudoholomorphic curves degenerate to Morse gradient flows) in for example \cite[Theorem 3.6, Part~(4)]{EESduality}. It says that certain sheaf homomorphism degenerates to Morse theory. This recovers Ike's result \cite[Lemma 4.9 \& Proposition 4.10 \& Theorem 4.13]{Ike} (where the isomorphism to Morse theory was obtained for $\mathbb{Z}/2\mathbb{Z}$).

\begin{corollary}\label{morseflow}
    Let $\Lambda \subset T^{*,\infty}_{\tau > 0}(M \times \mathbb{R})$ be a Legendrian and $\mathscr{F, G} \in Sh^b_\Lambda(M \times \mathbb{R})$ be constructible sheaves with perfect stalks and compact supports, then
    $$\Gamma(u^{-1}(0), \Gamma_{u\leq 0}(\mathscr{H}om(\mathscr{F}_q, \mathscr{F}_r)))[1] \simeq \Gamma(\Lambda; \mu hom(\mathscr{F, G})).$$
    In particular, when the Maslov class and relative second Stiefel-Whitney class of $\Lambda$ vanish and $\mathscr{F} = \mathscr{G}$ are microlocal rank $r$ sheaves, we have
    $$\Gamma(u^{-1}(0), \Gamma_{u\leq 0}(\mathscr{H}om(\mathscr{F}_q, \mathscr{F}_r)))[1] \simeq C^*(\Lambda; \Bbbk^{r^2}).$$
\end{corollary}
\begin{proof}
    We have an exact triangle as in Proposition \ref{positive-negative}
    \[\begin{split}
    \mathscr{H}om(\mathscr{F}_q, \mathscr{G}_r) \rightarrow i_{u>0*}i_{u>0}^{-1}\mathscr{H}om(\mathscr{F}_q, \mathscr{G}_r) \rightarrow \Gamma_{u\leq 0}(\mathscr{H}om(\mathscr{F}_q, \mathscr{G}_r))[1] \xrightarrow{+1},
    \end{split}\]
    where $i_{u>0}: u^{-1}((0,+\infty)) \hookrightarrow u^{-1}([0,+\infty))$ is the inclusion. By taking the sections on $u^{-1}([0, +\infty))$ and compare it with the exact triangle in Theorem \ref{exactseq}, we know that
    $$\Gamma_{u\leq0}(u^{-1}([0, +\infty)), \mathscr{H}om(\mathscr{F}_q, \mathscr{G}_r))[1] \simeq \Gamma(\Lambda; \mu hom(\mathscr{F, G})).$$
    However, note that the sheaf $\Gamma_{u\leq 0}(\mathscr{H}om(\mathscr{F}_q, \mathscr{G}_r))$ is supported in $u^{-1}((-\infty, 0])$, so
    $$\Gamma_{u\leq0}(u^{-1}([0, +\infty)), \mathscr{H}om(\mathscr{F}_q, \mathscr{G}_r))[1] \simeq \Gamma_{u\leq0}(u^{-1}(0), \mathscr{H}om(\mathscr{F}_q, \mathscr{G}_r))[1].$$
    This proves our assertion.
\end{proof}

\section{Persistence and Hamiltonian Isotopy}\label{persist}

\subsection{Persistence Modules and Sheaves}
    A persistent module is roughly speaking an $\mathbb{R}$-direct system of modules. It has been extensively studied \cite{Proximitypersist,Stablepersist} and has also been introduced in the context of sheaf theory by \cite{KSpersist}.

\begin{definition}
    Let $\Bbbk$ be a field. A persistence module $M_\mathbb{R}$ is a family $\{M_\alpha\}_{\alpha \in \mathbb{R}}$ of graded $\Bbbk$-modules, together with a family $\{f_{\alpha_0\alpha_1}: M_{\alpha_0} \rightarrow M_{\alpha_1}\}_{\alpha_0 \leq \alpha_1}$ such that $f_{\alpha_1\alpha_2} \circ f_{\alpha_0\alpha_1} = f_{\alpha_0\alpha_2}$ and $f_{\alpha\alpha} = \mathrm{id}_{M_\alpha}$. $M_\mathbb{R}$ is tame if for any $\alpha \in \mathbb{R}$, $\dim M_\alpha < \infty$.
\end{definition}

\begin{definition}
    Let $M_\mathbb{R}, N_\mathbb{R}$ be persistence modules. They are $(\epsilon, \epsilon')$-interleaved if there exists
    \begin{align*}
    \phi_\alpha \colon M_\alpha \rightarrow N_{\alpha+\epsilon}, \,\,\, \psi_\alpha \colon N_\alpha \rightarrow M_{\alpha+\epsilon'}
    \end{align*}
    such that $\phi_\beta \circ f^M_{\alpha,\beta} = f^N_{\alpha,\beta} \circ \phi_\alpha$, $\psi_\beta \circ f^N_{\alpha,\beta} = f^M_{\alpha,\beta} \circ \psi_\alpha$, and $f^M_{\alpha,\alpha+\epsilon+\epsilon'} = \psi_{\alpha+\epsilon} \circ \phi_{\alpha}, \,\, f^N_{\alpha,\alpha+\epsilon+\epsilon'} = \phi_{\alpha+\epsilon'} \circ \psi_\alpha.$
    The interleaving distance between $M_\mathbb{R}, N_\mathbb{R}$ is
    $$d(M_\mathbb{R}, N_\mathbb{R}) = \inf\{\epsilon + \epsilon' \mid M_\mathbb{R}, N_\mathbb{R} \text{ are }(\epsilon, \epsilon')\text{-interleaved}\}.$$
    Let $N_{\mathbb{R}+c}$ be the persistence module such that $(N_{\mathbb{R}+c})_\alpha = N_{\alpha+c}$. Then the translation invariant distance is
    $$\overline{d}(M_\mathbb{R}, N_\mathbb{R}) = \inf\{d(M_{\mathbb{R}}, N_{\mathbb{R}+c}) \mid c \in \mathbb{R} \}.$$
\end{definition}

    In this paper, we will use the language of constructible sheaves on $\mathbb{R}$ instead of persistence modules. Here is the classification result of these sheaves.

\begin{theorem}[Guillermou \cite{Guisurvey}*{Corollary 4.2.1}; Kashiwara-Schapira \cite{KSpersist}*{Theorem 1.17}]
    Let $\Bbbk$ be a field and $\mathscr{F} \in Sh_{\{\nu < 0\},c}^b(\mathbb{R})$ be a constructible sheaf. Then there exists an (index) set $A$ such that
    $$\mathscr{F} \simeq \bigoplus_{\alpha \in A}\Bbbk^{r_\alpha}_{(u_\alpha, v_\alpha]}[n_\alpha],$$
    and the collection of intervals $\{(u_\alpha, v_\alpha]\}_{\alpha \in A}$ is locally finite. Each interval $(u_\alpha, v_\alpha]$ is called a bar.
\end{theorem}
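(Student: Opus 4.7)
The plan is to reduce the classification to Gabriel's theorem for type-$A$ quiver representations. First, since $\mathscr{F}$ is constructible, its singular support is a subanalytic closed Lagrangian, which on $\mathbb{R}$ consists of the zero section together with finitely many cotangent fibers at break points $\alpha_0 < \alpha_1 < \cdots < \alpha_N$. On each open interval $(\alpha_i, \alpha_{i+1})$ (with the convention $\alpha_{-1} = -\infty$ and $\alpha_{N+1} = +\infty$) and at each point stratum $\{\alpha_i\}$ the sheaf is locally constant; write $W_i = \mathscr{F}|_{(\alpha_i, \alpha_{i+1})}$ for the constant value and $V_i = \mathscr{F}_{\alpha_i}$ for the stalk, and record the two generalization maps $V_i \to W_{i-1}$ and $V_i \to W_i$ associated to the specializations of $\alpha_i$ into the two adjacent open strata.

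Second, the hypothesis $SS^\infty(\mathscr{F}) \subset \{\nu < 0\}$ forbids covectors with $\nu > 0$, which at each $\alpha_i$ amounts (by the very definition of singular support, or microlocal Morse, Proposition \ref{morselemma}) to the vanishing $\Gamma_{[\alpha_i, +\infty)}(\mathscr{F})_{\alpha_i} \simeq 0$. From the excision triangle $\Gamma_{[\alpha_i, +\infty)}\mathscr{F} \to \mathscr{F} \to j_{*}j^{-1}\mathscr{F}$ with $j : (-\infty, \alpha_i) \hookrightarrow \mathbb{R}$, taken at the stalk $\alpha_i$, this vanishing is equivalent to the leftward generalization $V_i \xrightarrow{\sim} W_{i-1}$ being a quasi-isomorphism. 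Consequently $\mathscr{F}$ is recovered from the single linear sequence $W_{-1} \to W_0 \to W_1 \to \cdots \to W_N$ of perfect complexes of $\Bbbk$-vector spaces, where each arrow is the composite $W_{i-1} \xleftarrow{\sim} V_i \to W_i$.

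Third, because $\Bbbk$ is a field and the path algebra of the $A_{N+2}$ quiver (all arrows pointing right) is hereditary, every object in the bounded derived category of its representations splits as the direct sum of its shifted cohomologies; the problem thus reduces, cohomological degree by degree, to classical Gabriel's theorem, which decomposes any type-$A$ quiver representation in vector spaces into interval modules supported on consecutive indices $\{i{+}1, \ldots, j\}$ with value $\Bbbk$ and identity transitions. A direct stalk-and-section check identifies such an interval module with the sheaf $\Bbbk_{(\alpha_i, \alpha_{j+1}]}$; collecting isomorphic summands with multiplicities and cohomological shifts yields the stated decomposition $\mathscr{F} \simeq \bigoplus_{\alpha \in A} \Bbbk^{r_\alpha}_{(u_\alpha, v_\alpha]}[n_\alpha]$. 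The step I expect to be most delicate is the formality argument passing from a zigzag of perfect complexes to a diagram of honest graded vector spaces, which relies essentially on working over a field and on the Ext-vanishing property of type-$A$ quivers; an alternative avoiding this is to induct on $N$ by splitting off the rightmost bar, using that any linear map between finite-dimensional vector spaces admits compatible splittings.
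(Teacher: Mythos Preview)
The paper does not supply its own proof of this theorem: it is quoted as a known result with citations to Guillermou and to Kashiwara--Schapira, and is used as a black box thereafter. So there is no in-paper argument to compare your proposal against.

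That said, your outline is a correct and standard route to the statement. The reduction---finitely many break points by constructibility, the $\nu<0$ condition forcing the leftward generalization $V_i \xrightarrow{\sim} W_{i-1}$ to be an isomorphism, hence collapsing the zigzag to a linear $A_{N+2}$ diagram of perfect complexes---is exactly right, and matches the combinatorial description used elsewhere in the paper (Example \ref{combin-model}). Invoking hereditarity of the path algebra to split the diagram into shifted ordinary representations, followed by Gabriel's theorem, is a clean way to finish; your cautionary remark about formality is well placed, and the alternative inductive splitting you mention is equally serviceable. One small point worth tightening: the theorem as stated in the paper allows bars with $u_\alpha=-\infty$ or $v_\alpha=+\infty$ (interpreted as $(-\infty,v]$ or $(u,+\infty)$), corresponding in your quiver picture to interval modules touching the leftmost or rightmost vertex; your convention $\alpha_{-1}=-\infty$, $\alpha_{N+1}=+\infty$ handles this, but you should say explicitly which sheaf the resulting half-infinite interval module realizes.
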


    Note that for any constructible sheaf $\mathscr{F} \in Sh_{\{\nu < 0\},c}^b(\mathbb{R})$, we can associate a tame persistence module by $M_\alpha = H^*\Gamma((-\infty, \alpha), \mathscr{F}).$ All definitions and results in persistence modules can be stated in 1-dimensional sheaf theory easily.

    Now we define the interleaving distance for sheaves in arbitrary dimensions.

\begin{definition}[\cite{AsanoIke,AsanoIkecomplete}]
    Let $\mathscr{F, G} \in Sh_{\{\tau > 0\}}(M \times \mathbb{R})$ be two sheaves. Let $T_c: \mathbb{R} \rightarrow \mathbb{R}$ be the translation $T_c(x, t) = (x, t+c)$. They are $(\epsilon, \epsilon')$-interleaved if there exists
    \[\begin{split}
    \phi: \mathscr{F} \rightarrow T_{\epsilon*}\mathscr{G}, \,\,\, \psi: \mathscr{G} \rightarrow T_{\epsilon'*}\mathscr{F},
    \end{split}\]
    such that the following diagrams commute
    $$t^\mathscr{F}_{0,\epsilon+\epsilon'} = T_{\epsilon*}\psi \circ \phi, \,\, t^\mathscr{G}_{0,\epsilon+\epsilon'} = T_{\epsilon'*}\phi \circ \psi$$
    where $t^\mathscr{H}_{a,b}: \mathscr{H}\rightarrow T_{a+b,*}\mathscr{H}$ is the natural map. The interleaving distance between $\mathscr{F, G}$ is
    $$d(\mathscr{F, G}) = \inf\{\epsilon + \epsilon' \mid \mathscr{F, G} \text{ are }(\epsilon, \epsilon')\text{-interleaved}\}.$$
    The translation invariant distance is $\overline{d}(\mathscr{F, G}) = \inf\{ d(\mathscr{F}, T_{c*}\mathscr{G}) \mid c \in \mathbb{R}\}$.
\end{definition}
\begin{remark}
    The original definition of Asano-Ike \cite{AsanoIke} uses four morphisms $\phi, \psi, \phi'$ and $\psi'$ to define the interleaving distance, but later they proved that their result also works for the above definition \cite[Remark 3.8]{AsanoIkecomplete}. Thus, we choose to use the simpler definition.
\end{remark}

\begin{example}\label{persistinterval}
    Consider the sheaves $\Bbbk_{(a_0,b_0]}$ and $\Bbbk_{(a_1,b_1]}$ in $Sh_{\{\nu<0\},c}^b(\mathbb{R})$. Since their singular supports satisfy $\nu<0$, we need to choose the translation in the negative direction $U_c: \mathbb{R} \rightarrow \mathbb{R}, \, x\mapsto x-c$. Then if $a, a', b, b'$ are distinct, by Proposition \ref{ssforhom}
    $$\mathscr{H}om(\Bbbk_{(a,b]}, \Bbbk_{(a',b']}) = \Bbbk_{[a,b) \cap (a',b']}.$$
    There exists a degree zero non-vanishing map iff $a' < a$ and $b' < b$.

\begin{figure}
  \centering
  \includegraphics[width=0.6\textwidth]{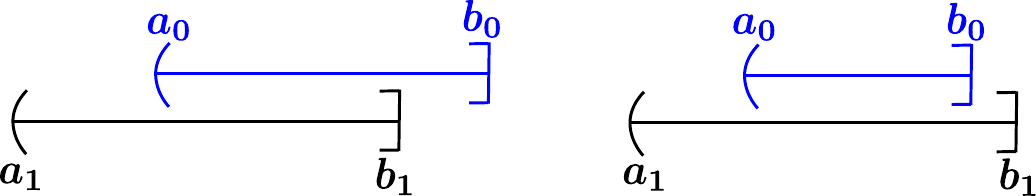}\\
  \caption{The sheaves $\Bbbk_{(a_0,b_0]}$ and $\Bbbk_{(a_1,b_1]}$ in two different cases.}\label{persistex}
\end{figure}

    Suppose $a_0>a_1, b_0>b_1$ and $a_0 < b_1$ (Figure \ref{persistex} left). One can show that the distance is
    $$d(\Bbbk_{(a_0,b_0]}, \Bbbk_{(a_1,b_1]}) = \inf\{\epsilon + \epsilon'\} = \max\{a_1-a_0, b_1-b_0\}.$$
    Suppose $a_0>a_1, b_0<b_1$ (Figure \ref{persistex} right). Then one can show that
    $$d(\Bbbk_{(a_0,b_0]}, \Bbbk_{(a_1,b_1]}) = \inf\{\epsilon + \epsilon'\} = (b_1-b_0)-(a_1-a_0).$$
    For the other two cases, one has similar results. In conclusion, one can see that the persistence distance is measuring how far the bars differ from each other.

    From the above computation, it is easy to show that we always have
    $$\overline{d}(\Bbbk_{(a_0,b_0]}, \Bbbk_{(a_1,b_1]}) = |(a_1-a_0) - (b_1-b_0)|.$$
    One can see that the translation invariant persistence distance is measuring how fast the lengths of bars change from one to the other.
\end{example}

    Here is a basic property we're going to use from time to time. It basically says that the persistence distance is a pseudo metric.

\begin{lemma}[\cite{AsanoIke}*{Proposition 4.10}]\label{triangleineq}
    Suppose $\mathscr{F, G}$ are $(a_0, b_0)$-interleaved, and $\mathscr{G, H}$ are $(a_1, b_1)$-interleaved. Then $\mathscr{F, H}$ are $(a_0+a_1, b_0+b_1)$-interleaved. In particular,
    $$d(\mathscr{F, H}) \leq d(\mathscr{F, G}) + d(\mathscr{G, H}).$$
    Moreover, $\overline{d}(\mathscr{F, H}) \leq \overline{d}(\mathscr{F, G}) + \overline{d}(\mathscr{G, H}).$
\end{lemma}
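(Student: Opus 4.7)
The plan is to prove this by direct composition of the interleaving data, exactly as for ordinary persistence modules, with the only subtlety being the bookkeeping of the translation functors $T_{c,*}$.

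First I would unpack the hypotheses. The $(a_0,b_0)$-interleaving of $\mathscr{F,G}$ supplies four morphisms $\phi_1: \mathscr{F} \to T_{a_0,*}\mathscr{G}$, $\psi_1: \mathscr{G} \to T_{b_0,*}\mathscr{F}$, $\phi_1': \mathscr{G} \to T_{a_0,*}\mathscr{F}$, $\psi_1': \mathscr{F} \to T_{b_0,*}\mathscr{G}$ satisfying $T_{a_0,*}\psi_1 \circ \phi_1 = t^\mathscr{F}_{0,a_0+b_0}$ and $T_{b_0,*}\phi_1' \circ \psi_1' = t^\mathscr{G}_{0,a_0+b_0}$, and similarly the $(a_1,b_1)$-interleaving of $\mathscr{G,H}$ supplies $\phi_2,\psi_2,\phi_2',\psi_2'$.

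Next I would define the candidate $(a_0+a_1, b_0+b_1)$-interleaving maps for $\mathscr{F,H}$ by composition:
\[
\Phi = T_{a_0,*}\phi_2 \circ \phi_1 : \mathscr{F} \to T_{a_0+a_1,*}\mathscr{H}, \qquad \Psi = T_{b_1,*}\psi_1 \circ \psi_2 : \mathscr{H} \to T_{b_0+b_1,*}\mathscr{F},
\]
and analogously $\Phi' = T_{a_1,*}\phi_1' \circ \phi_2'$, $\Psi' = T_{b_0,*}\psi_2' \circ \psi_1'$.

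Then I would verify the required compatibility $T_{a_0+a_1,*}\Psi \circ \Phi = t^\mathscr{F}_{0,a_0+a_1+b_0+b_1}$. Expanding and using functoriality of $T_{c,*}$ gives $T_{a_0+a_1+b_1,*}\psi_1 \circ T_{a_0+a_1,*}\psi_2 \circ T_{a_0,*}\phi_2 \circ \phi_1$. The middle two factors collapse via the $\mathscr{G}$-interleaving identity into $T_{a_0,*}t^\mathscr{G}_{0,a_1+b_1}$, and then naturality of the translation map $t^{(-)}_{0,a_1+b_1}$ applied to $\phi_1$ lets me rewrite the product as $T_{a_1+b_1,*}(T_{a_0,*}\psi_1 \circ \phi_1) \circ t^\mathscr{F}_{0,a_1+b_1}$; applying the $\mathscr{F}$-interleaving identity to the inner factor turns this into $T_{a_1+b_1,*}t^\mathscr{F}_{0,a_0+b_0} \circ t^\mathscr{F}_{0,a_1+b_1} = t^\mathscr{F}_{0,a_0+a_1+b_0+b_1}$, as wanted. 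The identity for $\mathscr{H}$ using $\Phi'$ and $\Psi'$ is verified by the same manipulation with roles swapped.

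The only obstacle is keeping indices consistent, since the translation functors shift the target, so each composition requires applying $T_{c,*}$ to one of the factors before composing; the bound $d(\mathscr{F,H}) \leq d(\mathscr{F,G}) + d(\mathscr{G,H})$ then follows immediately by taking infima of $\epsilon+\epsilon'$ on each side.
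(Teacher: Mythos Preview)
Your proposal is correct and follows essentially the same route as the paper: define the $(a_0+a_1,b_0+b_1)$-interleaving maps for $\mathscr{F},\mathscr{H}$ as the obvious compositions of the given interleaving data (with appropriate translations applied), then check the required factorizations of $t^{\mathscr{F}}$ and $t^{\mathscr{H}}$. In fact your write-up is more explicit than the paper's, which simply records the composed maps and asserts they work; your use of naturality of $t^{(-)}_{0,c}$ to collapse the middle factors is exactly the computation the paper leaves implicit.
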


\subsection{Continuity under Hamiltonian Isotopy}
    Given a Hamiltonian isotopy $\varphi_H^s\,(s\in I)$ on $T^{*,\infty}_{\tau > 0}(M \times \mathbb{R})$, Guillermou-Kashiwara-Schapira defined an equivalence functor called sheaf quantization $\Phi_H^s: Sh_{\{\tau >0\}}(M \times \mathbb{R}) \rightarrow Sh^b_{\{\tau >0\}}(M \times \mathbb{R})$ (Theorem \ref{GKS}). Asano and Ike studied how the quantization of a Hamiltonian isotopy changes the interleaving distance. Recall that
    $$\|H\|_\text{osc}^\Lambda = \|H\|_\text{osc} = \int_0^1\left(\max_{(x, \xi, t) \in \varphi_H^s(\Lambda)} H_s(x, \xi, t) - \min_{(x, \xi, t) \in \varphi_H^s(\Lambda)} H_s(x, \xi, t)\right)\,ds.$$
    Given a Legendrian isotopy, there always exists a Hamiltonian that is constant away from a compact subset such that
    $$\|H\|_\text{osc}^\Lambda = \int_0^1\left(\max_{T^{*,\infty}_{\tau > 0}(M \times \mathbb{R})} H_s - \min_{T^{*,\infty}_{\tau > 0}(M \times \mathbb{R})} H_s\right)\,ds.$$
    See for example \cite[Theorem 2.6.2]{Geiges}. Then by the uniqueness theorem of Guillermou-Kashiwara-Schapira \cite{GKS}, we know that the equivalence functor of sheaves induced by the Hamiltonian isotopy only depends on the Legendrian isotopy. Therefore, we will always choose such a Hamiltonian and use $\|H\|_\text{osc}^\Lambda$ and $\|H\|_\text{osc}$ interchangeably.

\begin{theorem}[Asano-Ike \cite{AsanoIke}*{Proposition 4.10}, \cite{AsanoIkecomplete}*{Proposition 3.9}]\label{asanoike}
    Let $H$ be a compactly supported Hamiltonian on $T^{*,\infty}_{\tau > 0}(M \times \mathbb{R})$ and $\Phi_H^s\,(s \in I)$ be its sheaf quantization functor. Then for $\mathscr{H} \in Sh_{\{\tau > 0\}}^b(M \times \mathbb{R})$,
    $${d}(\mathscr{H}, \Phi_H^1(\mathscr{H})) \leq \|H\|_\text{osc}.$$
\end{theorem}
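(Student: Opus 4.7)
The plan is to extract the interleaving morphisms directly from the sheaf-quantization kernel by bounding its $t$-support by the oscillation of $H$ and applying microlocal cut-off (Lemma \ref{cutoff}) in the direction transverse to the diagonal.

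First, Theorem \ref{GKS} realizes $\Phi_H^1$ as convolution with a sheaf kernel $\mathscr{K}_H \in Sh^b(M \times M \times \mathbb{R}^2)$ whose singular support lies on the Lagrangian graph of the homogeneous lift $\widehat H_s(x,\xi,t,\tau) = \tau H_s(x,\xi/\tau,t)$ at time $s=1$. Because the homogeneous lift preserves $\tau$, and on the slice $\tau = 1$ the $t$-shift along a trajectory equals $-\int_0^1 H_\sigma \circ \varphi^\sigma_H\, d\sigma$, this shift lies in $[-a, b]$ where $a = \int_0^1 \max H_\sigma\, d\sigma$ and $b = -\int_0^1 \min H_\sigma\, d\sigma$, so $a + b = \|H\|_\text{osc}$. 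Consequently the support of $\mathscr{K}_H$ in the $(t_2 - t_1)$-coordinate is contained in $[-a, b]$.

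Next, I would apply microlocal cut-off in the $(t_2 - t_1)$-direction to produce canonical kernel morphisms $\Bbbk_{\Delta_M \times \{t_2 - t_1 \geq -a\}} \to \mathscr{K}_H \to \Bbbk_{\Delta_M \times \{t_2 - t_1 \geq b\}}$ whose composition agrees with the canonical comparison between the two shifted-identity kernels. Convolving with $\mathscr{F} \in Sh^b_{\tau > 0}(M \times \mathbb{R})$ and using $\mathscr{F} \star \Bbbk_{[c, +\infty)} \simeq T_{c,*}\mathscr{F}$ (again from Lemma \ref{cutoff} applied to $\mathscr{F}$ itself) converts these into morphisms $\phi: \mathscr{F} \to T_{a,*}\Phi_H^1(\mathscr{F})$ and $\psi: \Phi_H^1(\mathscr{F}) \to T_{b,*}\mathscr{F}$. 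A parallel construction swapping the roles of $\max$ and $\min$ in the cut-off cone yields the complementary interleaving maps $\phi', \psi'$.

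Finally, the composition identity $T_{a,*}\psi \circ \phi = t^\mathscr{F}_{0, a+b}$ descends from the analogous identity at the kernel level, and combined with the dual identity for $\phi', \psi'$ exhibits an $(a,b)$-interleaving; hence $d(\mathscr{F}, \Phi_H^1(\mathscr{F})) \leq a + b = \|H\|_\text{osc}$. The main obstacle is verifying this kernel-level composition identity: one must check that the microlocal cut-off procedure applied to the GKS kernel produces morphisms whose composition is literally the shift on identity kernels rather than some cohomologically twisted variant. This reduces to an explicit computation with the GKS kernel near the "flat" locus where $\widehat H$ degenerates to a pure translation, and is the essential content of Asano-Ike's argument; the compact support assumption on $H$ ensures that all convolutions and push-forwards involved are well-defined and commute with the relevant limits.
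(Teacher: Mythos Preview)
Your approach is genuinely different from the paper's, and it contains a concrete error that breaks the argument. The error is in your formula for the $t$-shift. Along a trajectory of the homogeneous lift $\widehat H(x,\xi,t,\tau)=\tau H(x,\xi/\tau,t)$ one has $\dot t=\partial_\tau\widehat H=H-(\xi/\tau)\,\partial_{\xi_0}H$ (writing $\xi_0=\xi/\tau$), not $\dot t=-H$. Hence $t_2-t_1=\int_0^1\big(H-\xi_0\,\partial_{\xi_0}H\big)\,d\sigma$, which for a general compactly supported contact Hamiltonian is \emph{not} controlled by $\|H\|_{\mathrm{osc}}$; it involves first derivatives of $H$ in the fibre direction. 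So your claimed bound on the $(t_2-t_1)$-support of $\mathscr{K}_H$ fails, and the subsequent construction has nothing to feed on. Even granting such a bound, the step ``apply microlocal cut-off to produce $\Bbbk_{\Delta\times\{t_2-t_1\geq -a\}}\to\mathscr{K}_H\to\cdots$'' is not what Lemma~\ref{cutoff} provides: cut-off asserts that convolution with the cone is an isomorphism, it does not manufacture comparison maps between shifted identity kernels and an arbitrary kernel.

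The paper never attempts to bound the $t$-displacement of the flow. Instead it works with the \emph{movie} $\mathscr{H}\in Sh^b(M\times\mathbb{R}\times I)$ of $\mathscr{F}$ under $\varphi_H^s$, whose singular support satisfies $\sigma=-\tau H_s$ on the Lagrangian movie; thus $\sigma/\tau=-H_s$ is exactly the quantity bounded by $\min H_s$ and $\max H_s$, giving a cone condition on each subinterval $[s_{i-1},s_i]$. Lemma~\ref{lemmaasanoike} converts this cone condition into vanishing of the natural translation maps on $\pi_*(\mathscr{H}|_{[s_{i-1},s_i)})$ and $\pi_*(\mathscr{H}|_{(s_{i-1},s_i]})$; feeding that vanishing into the exact triangles relating these push-forwards to the slices $\mathscr{H}|_{\{s_{i-1}\}}$, $\mathscr{H}|_{\{s_i\}}$ yields an interleaving on each piece. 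Chaining via Lemma~\ref{triangleineq} and refining the partition produces the Riemann sum converging to $\|H\|_{\mathrm{osc}}$. The conceptual point: what the oscillation norm controls is the covector $\sigma$ dual to the \emph{isotopy parameter} $s$, not the displacement in $t$; the paper's argument keeps the $s$-direction and extracts the interleaving from the $\sigma$-bound, whereas your time-$1$ kernel approach discards $s$ and is left trying to bound the wrong quantity.
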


    Using this machinery, we now study our sheaf $\mathscr{H}om(\mathscr{F}_q, \mathscr{G}_r)$ for $\mathscr{F, G} \in Sh^b_c(M \times \mathbb{R})$. As we have seen in previous sections, the last $\mathbb{R}$ component encodes the length of all Reeb chords on $\Lambda$. Hence in order to get information on how the Reeb chords change under Hamiltonian isotopies, we project the sheaf to the last component $\mathbb{R}$ via $u\colon M \times \mathbb{R}^2 \rightarrow \mathbb{R},\,\,(x, t, u) \mapsto u$ and estimate the persistence structure on
    $$u_{*}\mathscr{H}om(\mathscr{F}_q, \mathscr{G}_r).$$
    By Lemma \ref{reebchord-hom}, this is a constructible sheaf in $Sh^b_{\nu < 0,c}(\mathbb{R})$. Here is our main result in this section.

\begin{definition}[Definition \ref{persistmod}]\label{persistmod2}
    For sheaves $\mathscr{F, G} \in Sh(M \times \mathbb{R})$, let
    $$\mathscr{H}om_{(-\infty,+\infty)}(\mathscr{F, G}) = u_*\mathscr{H}om(\mathscr{F}_q, \mathscr{G}_r).$$
\end{definition}
\begin{remark}
    For people who are familiar with Tamarkin categories \cite{Tamarkin1,GS}, we mention that by abusing notations to write $u\colon: M \times \mathbb{R}_u \to \mathbb{R}_u$, we have
    $$\mathscr{H}om_{(-\infty,+\infty)}(\mathscr{F, G}) = u_*\mathscr{H}om^\star(\mathscr{F, G}).$$
\end{remark}

\begin{theorem}[Theorem \ref{persistcontinue}]\label{persistcontinue2}
    Let $\Lambda \subset T^{*,\infty}_{\tau > 0}(M \times \mathbb{R})$ be a compact Legendrian, $H$ be a Hamiltonian on $T^{*,\infty}_{\tau > 0}(M \times \mathbb{R})$ and $\Phi_H^s\,(s \in I)$ be its sheaf quantization. Then for sheaves $\mathscr{F, G} \in Sh_\Lambda(M \times \mathbb{R})$ with compact support,
    $$\overline{d}(\mathscr{H}om_{(-\infty,+\infty)}(\mathscr{F, G}), \mathscr{H}om_{(-\infty,+\infty)}(\mathscr{F}, \Phi_H^1(\mathscr{G}))) \leq \|H\|_\text{osc}^\Lambda.$$
\end{theorem}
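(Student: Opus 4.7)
The plan is to derive Theorem \ref{persistcontinue2} from Asano-Ike's Theorem \ref{asanoike} by transporting interleavings through the functor $\mathscr{G} \mapsto \mathscr{H}om_{(-\infty,+\infty)}(\mathscr{F}, \mathscr{G}) = u_*\mathscr{H}om(\mathscr{F}_q, \mathscr{G}_r)$. Theorem \ref{asanoike} applied to $\mathscr{G}$ and $\Phi_H^1(\mathscr{G})$ yields, for any $\delta > 0$, an $(\epsilon, \epsilon')$-interleaving with $\epsilon + \epsilon' \leq \|H\|_\text{osc} + \delta$, i.e.\ morphisms
\[
\phi: \mathscr{G} \to T_{\epsilon,*}\Phi_H^1(\mathscr{G}),\;\; \psi: \Phi_H^1(\mathscr{G}) \to T_{\epsilon',*}\mathscr{G}, \;\; \phi': \Phi_H^1(\mathscr{G}) \to T_{\epsilon,*}\mathscr{G}, \;\; \psi': \mathscr{G} \to T_{\epsilon',*}\Phi_H^1(\mathscr{G})
\]
satisfying the two interleaving composition identities.

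The central observation is that our functor intertwines $t$-translation on $M \times \mathbb{R}$ with $u$-translation on $\mathbb{R}$, up to a sign flip. Let $V_c: M \times \mathbb{R}^2 \to M \times \mathbb{R}^2$ be $(x,t,u) \mapsto (x,t,u-c)$, and let $U_c: \mathbb{R} \to \mathbb{R}$ be $v \mapsto v-c$; the latter is the translation governing persistence of $\nu < 0$ sheaves on $\mathbb{R}$, as in Example \ref{persistinterval}. The relation $r \circ V_c = T_c \circ r$ gives $(T_{c,*}\mathscr{G})_r = V_{c,*}\mathscr{G}_r$; since $q \circ V_c = q$ we have $V_{c,*}\mathscr{F}_q = \mathscr{F}_q$, and hence $\mathscr{H}om(\mathscr{F}_q, V_{c,*}\mathscr{G}_r) = V_{c,*}\mathscr{H}om(\mathscr{F}_q, \mathscr{G}_r)$; finally $u \circ V_c = U_c \circ u$ gives $u_* V_{c,*} = U_{c,*} u_*$. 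Composing these identifications yields
\[
\mathscr{H}om_{(-\infty,+\infty)}(\mathscr{F}, T_{c,*}\mathscr{G}) \simeq U_{c,*}\mathscr{H}om_{(-\infty,+\infty)}(\mathscr{F}, \mathscr{G}),
\]
and the natural translation $\tau_{0,c}: \mathscr{G} \to T_{c,*}\mathscr{G}$ corresponds under this identification to the natural translation on the target.

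Applying $\mathscr{H}om_{(-\infty,+\infty)}(\mathscr{F}, -)$ to Asano-Ike's four morphisms therefore produces an $(\epsilon, \epsilon')$-interleaving between $\mathscr{H}om_{(-\infty,+\infty)}(\mathscr{F, G})$ and $\mathscr{H}om_{(-\infty,+\infty)}(\mathscr{F}, \Phi_H^1(\mathscr{G}))$ in the sense of Example \ref{persistinterval}; the required commuting compositions are transported from the interleaving identities on $M \times \mathbb{R}$ by functoriality, using the above identification of natural translations. Letting $\delta \to 0$ gives the stated bound. The main subtlety is tracking the sign flip between the two persistence directions, cleanly encoded in $r \circ V_c = T_c \circ r$; once this is in hand the rest is formal functoriality, together with the mild check that $\Phi_H^1(\mathscr{G})$ remains compactly supported so that the functor and Asano-Ike's hypotheses apply on both sides.
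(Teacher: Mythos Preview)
Your proof is correct and follows essentially the same approach as the paper: both arguments rest on the intertwining identity $\mathscr{H}om_{(-\infty,+\infty)}(\mathscr{F}, T_{c,*}\mathscr{G}) \simeq U_{c,*}\mathscr{H}om_{(-\infty,+\infty)}(\mathscr{F}, \mathscr{G})$, deduced from $r \circ V_c = T_c \circ r$ and $q \circ V_c = q$ (your $V_c$ is the paper's $U_c$ on $M \times \mathbb{R}^2$), together with Asano--Ike's Theorem~\ref{asanoike}. The only cosmetic difference is that the paper first extends $H$ to a compactly supported Hamiltonian $\widehat H$ on $T^{*,\infty}_{\tau>0}(M \times \mathbb{R}^2)$ so as to identify $(\Phi_H^s\mathscr{G})_r$ with $\Phi_{\widehat H}^s(\mathscr{G}_r)$ and then applies Asano--Ike on $M \times \mathbb{R}^2$, whereas you apply Asano--Ike directly to $\mathscr{G}$ on $M \times \mathbb{R}$ and transport the resulting interleaving through the functor; your route is slightly more direct and sidesteps the cutoff construction, but the substance is identical.
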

\begin{proof}
    We may assume that $H = c$ outside a compact subset and consider the contact Hamiltonian $H - c$. Since $\overline{d}(\mathscr{H}, \Phi_H^1(\mathscr{H})) \leq d(\mathscr{H}, \Phi_{H-c}^1(\mathscr{H}))$ and $\|H\|_\text{osc} = \|H - c\|_\text{osc}$, we may assume replace $H$ be $H - c$ and assume that the Hamiltonian is compactly supported. We will show that
    $$d(u_*\mathscr{H}om(\mathscr{F}_q, \mathscr{G}_r), u_*\mathscr{H}om(\mathscr{F}_q, (\Phi_H^1\mathscr{G})_r) \leq d(\mathscr{G}, (\Phi_H^1\mathscr{G})).$$
    For the above inequality, it is enough to show that if $\mathscr{G}, \mathscr{G}'$ are $(\epsilon,\epsilon')$-interleaved, then $u_*\mathscr{H}om(\mathscr{F}_q, \mathscr{G}_r), \,u_*\mathscr{H}om(\mathscr{F}_q, \mathscr{G}'_r)$ will also be $(\epsilon,\epsilon')$-interleaved. 

    First, let $T_c(x, t, u) = (x, t+c, u)$ and $T_c(x, t) = (x, t+c)$. Then $T_{c*}\mathscr{G}_r = (T_{c*}\mathscr{G})_r$. This shows that if $\mathscr{G}, \mathscr{G}'$ are $(\epsilon,\epsilon')$-interleaved, then $\mathscr{G}_r, \mathscr{G}'_r$ will also be $(\epsilon,\epsilon')$-interleaved. Then let $U_c(x, t, u) = (x, t, u-c)$. Since $r \circ T_c = r \circ U_c$ and $q = q \circ U_c$,
    $$\mathscr{H}om(\mathscr{F}_q, T_{c*}\mathscr{G}_r) = \mathscr{H}om(U_{c*}\mathscr{F}_q, U_{c*}\mathscr{G}_r) = U_{c*}\mathscr{H}om(\mathscr{F}_q, \mathscr{G}_r).$$
    For any morphism $\mathscr{G}_r \rightarrow T_{c*}\mathscr{G}'_r$ there is a canonical morphism
    $$\mathscr{H}om(\mathscr{F}_q, \mathscr{G}_r) \rightarrow \mathscr{H}om(\mathscr{F}_q, T_{c*}\mathscr{G}'_r).$$
    Therefore there is always a canonical morphism
    $$\mathscr{H}om(\mathscr{F}_q, \mathscr{G}_r) \rightarrow U_{c*}\mathscr{H}om(\mathscr{F}_q, \mathscr{G}'_r).$$
    By abuse of notations, we also write $U_c: \mathbb{R} \rightarrow \mathbb{R}, \,u \mapsto u-c$. Note that $u \circ U_c = U_c$, so one will have a canonical morphism
    $$u_*\mathscr{H}om(\mathscr{F}_q, \mathscr{G}_r) \rightarrow U_{c*}u_*\mathscr{H}om(\mathscr{F}_q, \mathscr{G}'_r).$$
    This shows that if $\mathscr{G}_r, \mathscr{G}'_r$ are $(\epsilon,\epsilon')$-interleaved, then $u_*\mathscr{H}om(\mathscr{F}_q, \mathscr{G}_r), \,u_*\mathscr{H}om(\mathscr{F}_q, \mathscr{G}'_r)$ will also be $(\epsilon,\epsilon')$-interleaved. Then the
result follows from Theorem \ref{asanoike}.
\end{proof}

    As an example, we will try to understand the persistence module $\mathscr{H}om_{(-\infty,+\infty)}(\Bbbk_{(x_0,t_0)}, \mathscr{F})$ where $\Bbbk_{(x_0,t_0)}$ is the skyscraper sheaf at $(x_0, t_0) \in M \times \mathbb{R}$. While $\Bbbk_{(x_0,t_0)} \notin Sh_{\tau > 0}^b(M \times \mathbb{R})$, we claim that all the previous results are still valid  as long as $\mathscr{F} \in Sh_{\tau > 0}^b(M \times \mathbb{R})$.

\begin{lemma}\label{reebchord-hom2}
    For $\Lambda \subset T^{*,\infty}_{\tau > 0}(M \times \mathbb{R})$ and $\mathscr{F} \in Sh_\Lambda(M \times \mathbb{R})$,
    $$SS^\infty(\mathscr{H}om((\Bbbk_{(x_0,t_0)})_q, \mathscr{F}_r)) \cap \mathrm{Graph}(du) = \varnothing.$$
    On the other hand, there is an injection from
    $$SS^\infty(\mathscr{H}om((\Bbbk_{(x_0,t_0)})_q, \mathscr{F}_r)) \cap \mathrm{Graph}(-du)$$
    to the set of directed Reeb chords (Reeb trajectories for some positive or negative time $u$) $\mathcal{Q}_\pm(T_{(x_0,t_0)}^{*,\infty}(M \times \mathbb{R}), \Lambda) = \{\gamma\colon[0,u] \rightarrow T^{*,\infty}_{\tau>0}(M \times \mathbb{R}) \mid \gamma(s) = (x_0, t_0+s, \xi, \tau),\,\gamma(u) \in \Lambda \}$.
\end{lemma}

    The proof is identical as Lemma \ref{reebchord-hom}. Since this Lemma still holds, one can easily see that all previous discussions in this section still hold for
    $$\mathscr{H}om_{(-\infty,+\infty)}(\Bbbk_{(x_0,t_0)}, \mathscr{F}) = u_*\mathscr{H}om((\Bbbk_{(x_0,t_0)})_q, \mathscr{F}_r).$$

\begin{figure}
  \centering
  \includegraphics[width=0.8\textwidth]{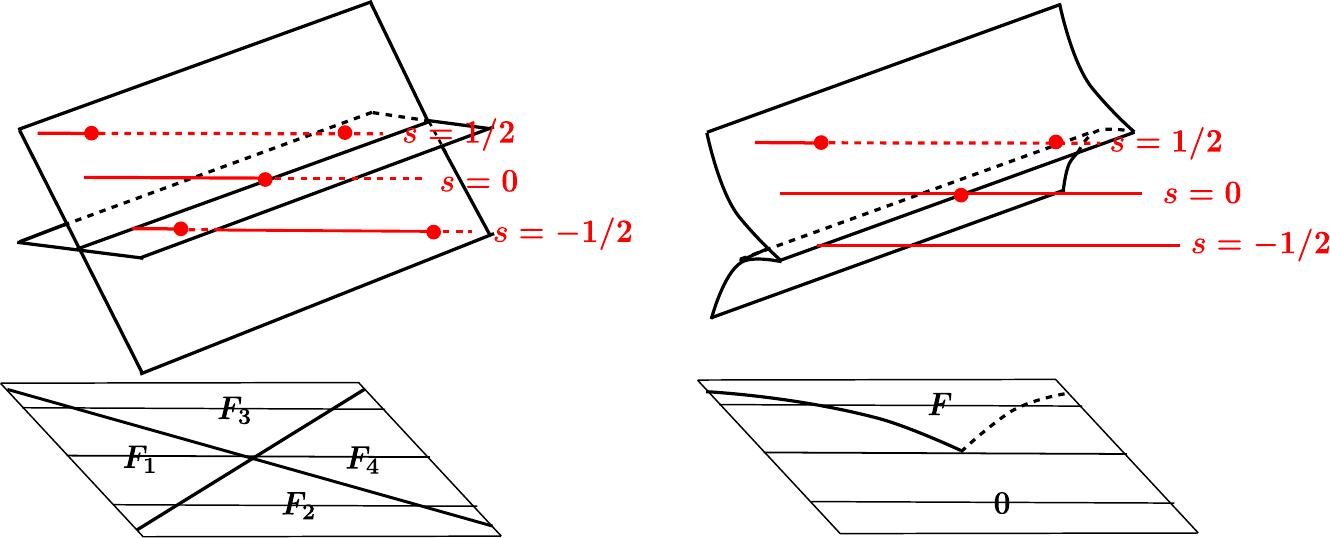}\\
  \caption{Birth-death of Reeb chords (on the right) and swapping of Reeb chords (on the left). On the top, the black Legendrians are $(\Lambda_s)_r$ while the red curves are $(T^{*,\infty}_{(0,1)}\mathbb{R}^2)_q$. The $u$-axis is horizontal, the $t$-axis is vertical, while the $s$-axis is pointing into the blackboard.}\label{isotopyex}
\end{figure}

\begin{example}\label{fiberlegendrian}
    The first example is about birth-death of Reeb chords (Figure \ref{isotopyex} right). We consider a family of Legendrians $\Lambda_s = \{(x, \pm 3(x+s)^{1/2}/2, (x+s)^{3/2}) \mid x+s \geq 0\} \subset J^1(\mathbb{R})$ whose front projections are standard cusps $\{(x, t) \mid t^2 = (x+s)^3\}$. Consider Reeb chords from $\Lambda_s$ to the fiber $T^{*,\infty}_{(0,1)}\mathbb{R}^2$. At $s = 0$, a pair of Reeb chords are created.

    For $F \in \mathrm{Mod}(\Bbbk)$, consider the sheaf
    $$\mathscr{F}_s = F_{\{(x, t) \mid 0 \leq t < (x+s)^{3/2} \text{ or } -(x+s)^{3/2} \leq t < 0\}}.$$
    Then consider $u_*\mathscr{H}om((\Bbbk_{(0,1)})_q, \mathscr{F}_r)$. One can see that
    $$u_*\mathscr{H}om((\Bbbk_{(0,1)})_q, (\mathscr{F}_s)_r)_{u=c} = \Gamma(\mathbb{R}, \mathscr{H}om(\Bbbk_{(0,1)}, T_{c*}\mathscr{F}_s)) = \mathscr{F}_s|_{(0,1-c)}.$$
    Therefore when $s\leq 0$, we have $\mathscr{H}om_{(-\infty,+\infty)}(\Bbbk_{(0,1)}, \mathscr{F}_s) = 0$. When $s> 0$,
    $$\mathscr{H}om_{(-\infty,+\infty)}(\Bbbk_{(0,1)}, \mathscr{F}_s) = F_{(1-s^{3/2}, 1+s^{3/2}]}.$$
    In other words, the birth of Reeb chords creates a new bar.
\end{example}

    When the Hamiltonian isotopy swaps the length of two Reeb chords, the behaviour of the sheaf $\mathscr{H}om_{(-\infty,+\infty)}(-, -)$ under the isotopy may be more complicated. However, there are still very specific cases where the behaviour is relatively clear.

\begin{example}\label{persistswap}
    The second example is a specific case of swapping of Reeb chords (Figure \ref{isotopyex} left). We consider a family of Legendrians $\Lambda_s = \{(x, \pm 1, \pm(x+s)) \mid x \in \mathbb{R}\} \subset J^1(\mathbb{R})$ whose front projections are standard crossings $\{(x, t) \mid t = \pm (x + s)\}$. Consider Reeb chords from $\Lambda_s$ to the fiber $T^{*,\infty}_{(0,1)}\mathbb{R}^2$. At $s = 0$, a pair of Reeb chords are swapped.

    For $F_1, F_2, F_3, F_4 \in \mathrm{Mod}(\Bbbk)$, consider the sheaf
    \begin{gather*}
    \mathscr{F}_s|_{\{(x, y)\mid t \geq |x|\}} = F_1|_{\{(x, y)\mid t \geq |x+s|\}}, \,\,\, \mathscr{F}_s|_{\{(x, y)\mid x < 0, -t < x+s \leq t\}} = F_2|_{\{(x, y)\mid x < 0, -t < x+s \leq t\}}, \\ 
    \mathscr{F}_s|_{\{(x, y)\mid x > 0, -t < x+s \leq t\}} = F_3|_{\{(x, y)\mid x > 0, -t < x+s \leq t\}}, \,\,\, \mathscr{F}_s|_{\{(x, y)\mid t < -|x+s|\}} = F_4|_{\{(x, y)\mid t < -|x+s|\}}.
    \end{gather*}
    The sheaf $\mathscr{F}_s$ is characterized by the diagram (see Example \ref{combin-model} or \cite[Section 3.3]{STZ})
    \[\xymatrix{
    F_1 \ar[r] \ar[d] & F_3 \ar[d] \\
    F_2 \ar[r]& F_4.
    }\]
    where $\mathrm{Tot}\,(F_1 \rightarrow F_2 \oplus F_3 \rightarrow F_4) \simeq 0$. Then $u_*\mathscr{H}om((\Bbbk_{(0,1)})_q, \mathscr{F}_r)_{u=c} = \mathscr{F}_s|_{(x,t)=(0,1-c)}$. When $s < 0$, $\mathscr{H}om_{(-\infty,+\infty)}(\Bbbk_{(0,1)}, \mathscr{F}_s)$ is determined by the diagram
    $$F_1 \longrightarrow F_2 \longrightarrow F_4.$$
    When $s> 0$, $\mathscr{H}om_{(-\infty,+\infty)}(\Bbbk_{(0,1)}, \mathscr{F}_s)$ is characterized by the diagram
    $$F_1 \longrightarrow F_3 \longrightarrow F_4.$$
    Decomposing the sheaf as $\bigoplus_{\alpha \in A}\Bbbk_{(a_\alpha, b_\alpha]}^{r_\alpha}[n_\alpha]$, we have for $s < 0$,
    \begin{align*}
    \mathscr{H}om_{(-\infty,+\infty)}(\Bbbk_{(0,1)}, \mathscr{F}_s) \simeq & \,V_{(-\infty,+\infty)} \oplus V_{(-\infty,-s]} \oplus V_{(-\infty,s]} \\
    & \, \oplus V_{(-s,s]} \oplus V_{(-s,+\infty)} \oplus V_{(s,+\infty)}.
    \end{align*}
    When $s> 0$,
    \begin{align*}
    \mathscr{H}om_{(-\infty,+\infty)}(\Bbbk_{(0,1)}, \mathscr{F}_s) \simeq & \,U_{(-\infty,+\infty)} \oplus U_{(-\infty,-s]} \oplus U_{(-\infty,s]} \\
    & \, \oplus U_{(-s,s]} \oplus U_{(-s,+\infty)} \oplus U_{(s,+\infty)}.
    \end{align*}
    Using the condition $\mathrm{Tot}\,(F_1 \rightarrow F_2 \oplus F_3 \rightarrow F_4) \simeq 0$, one can show that
    \begin{gather*}
    V_{(-s,s]} \simeq U_{(-s,s]} \simeq 0, \\
    V_{(-\infty,-s]} \simeq U_{(-\infty,s]}, \,\, V_{(-s,+\infty)} \simeq U_{(s, +\infty)}, \\
    V_{(-\infty,s]} \simeq U_{(-\infty,-s]}, \,\, V_{(s,+\infty)} \simeq U_{(-s, +\infty)}, \\
    V_{(-\infty,+\infty)} \simeq U_{(-\infty,+\infty)}.
    \end{gather*}
    Hence in this specific case, swapping of Reeb chords swaps starting/ending points of bars.
\end{example}

\section{Reeb Chord Estimation}\label{reebchordsection}

    Our goal in this section is to relate the number of Reeb chords with $Hom_+(\mathscr{F, G})$ and $\mathscr{H}om_{(-\infty,+\infty)}(\mathscr{F, G})$, and hence finish the proof of Theorem \ref{puresheaf}, \ref{mixedsheaf} and \ref{displaceable}.

\subsection{Local Calculation for Microstalks}
    By Lemma \ref{reebchord-hom}, we know that certain covectors in the singular support of $\mathscr{H}om(\mathscr{F}_q, \mathscr{G}_r)$ correspond to Reeb chords. The microlocal Morse inequality Proposition \ref{morseinequality} relates the global section of sheaves to its microstalks. Hence it suffices to determine if the ranks of the microstalks $\Gamma_{u\leq u_i}(\mathscr{H}om(\mathscr{F}_q, \mathscr{G}_r))_{(x_i, t_i, u_i)}$. 
    Using Corollary \ref{morseflow} (where the Legendrian is taken to be $\Lambda \cup T_{u_i}(\Lambda)$), we have
    $$\Gamma(M \times \mathbb{R}, \Gamma_{u\leq u_i}(\mathscr{H}om(\mathscr{F}_q, \mathscr{G}_r))) \simeq \Gamma(\Lambda \cup T_{u_i}(\Lambda), \mu hom(\mathscr{F}, T_{u_i*}\mathscr{G}))[-1].$$
    Here is the main result that obtains the microstalks by local computations.

\begin{proposition}\label{localcompute}
    For $\Lambda \subset T^{*,\infty}_{\tau > 0}(M \times \mathbb{R})$ a chord generic Legendrian and $\mathscr{F, G} \in Sh^b_\Lambda(M \times \mathbb{R})$ sheaves with perfect stalks whose microstalks are $F$ and $G$, let $\{(x_i, 0, t_i, 0, u_i, \nu_i)\}_{i\in I}$ be the set
    $$((-\Lambda_q) + \Lambda_r) \cap \{(x, 0, t, 0, u, \nu) \mid u>0, \nu < 0\}.$$
    Suppose $(x_i, t_i, u_i)$ corresponds to a degree $d_i$ Reeb chord from $(x_i, \xi_i, t_i, 1)$ to $(x_i, \xi_i, t_i+u_i, 1)$ in Lemma \ref{reebchord-hom}. Then
    $$\Gamma_{u\leq u_i}(\mathscr{H}om(\mathscr{F}_q, \mathscr{G}_r))_{(x_i, t_i, u_i)} \simeq \mu hom(\mathscr{F}, T_{u_i*}\mathscr{G})_{(x_i,\xi_i, t_i, 1)}[-1] \simeq Hom(F, G)[-d_i].$$
\end{proposition}
\begin{remark}
    When there is a Morse-Bott family of Reeb chords, we have a similar result by Ike \cite[Theorem 4.14]{Ike}. Here, as opposed to the result of Ike, we do not need to appeal to contact transformation \cite[Theorem 7.2.1]{KS}.
\end{remark}

    First, we note that $\mu hom(\mathscr{F}, T_{u_i*}\mathscr{G})_{(x_i,\xi_i, t_i, 1)}[-1]$ only depends on the microstalk of $\mathscr{F}$ and $\mathscr{G}$ by Proposition \ref{microstalk}.

\begin{lemma}\label{localcomputemicrostalk}
    For $\Lambda \subset T^{*,\infty}_{\tau > 0}(M \times \mathbb{R})$ a chord generic Legendrian. For any sheaves $\mathscr{F, G} \in Sh^b_\Lambda(M \times \mathbb{R})$ with perfect stalks whose microstalks are $F$ and $G$,
    $$\Gamma_{u\leq u_i}(\mathscr{H}om(\mathscr{F}_q, \mathscr{G}_r))_{(x_i, t_i, u_i)} \simeq \mu hom(\mathscr{F}, T_{u_i*}\mathscr{G})_{(x_i,\xi_i, t_i, 1)}[-1]$$
    are all isomorphic.
\end{lemma}

    Recall from Section \ref{prelimcontact} that the degree of a Reeb chord $\gamma \in \mathcal{Q}_+(\Lambda)$ is defined as follows (recall that $\gamma \in \mathcal{Q}_+(\Lambda)$ means $\gamma$ is a trajectory of the Reeb flow for some positive time so that $\gamma'(t) > 0$). Suppose at $a = (x, \xi, t, \tau)$ and $b = (x, \xi, t+u, \tau)$ ($u>0$),
    $$n - \deg(\gamma) = d(a) - d(b) + \mathrm{ind}(D^2h_{ab}) - 1,$$
    where $d(b), d(a)$ are Maslov potentials at $b, a$, and $h_{ab} = h_b-h_a$ for $h_b, h_a$ whose graphs at $b, a$ are $\pi_\text{front}(\Lambda)$. By Morse lemma, we assume that in local coordinates
    $$h_b(x) = u, \,\,\, h_a(x) = -\sum_{i\leq k}x_i^2 + \sum_{j\geq k+1}x_j^2.$$
    Let $U_{x_i,t_i}$ be a small neighbourhood of $(x_i, t_i)$ and $\epsilon > 0$ be a small positive number. We write
    $$U^- = U_{x_i,t_i} \times \{u_i-\epsilon\}, \,\,\, U^+ = U_{x_i,t_i} \times \{u_i+\epsilon\}.$$
    Consider the stratification of $U^\pm$ by the graphs of $h_a$ and $h_b$. We will write
    \begin{gather*}
    U^\pm \cap \{(x, t) \mid t > h_b(x)\} = U_{q,0}, \,\,\, U^\pm \cap \{(x, t) \mid t \leq h_b(x)\} = U_{q,1}, \\
    U^\pm \cap \{(x, t) \mid t < h_a(x) + u_i\pm \epsilon/2\} = U_{r,0}^\pm, \,\,\, U^\pm \cap \{(x, t) \mid t \geq h_a(x) + u_i\pm \epsilon/2\} = U_{r,1}^\pm.
    \end{gather*}

    By microlocal Morse lemma and Lemma \ref{reebchord-hom}, it suffices to calculate
   \begin{align*}
    \mathrm{Cone}\big( &\Gamma(U_{x_i,t_i} \times (u_i-2\epsilon, u_i), \mathscr{H}om(\mathscr{F}_q, \mathscr{F}_r)) \\
    &\rightarrow \Gamma(U_{x_i,t_i} \times (u_i, u_i+2\epsilon), \mathscr{H}om(\mathscr{F}_q, \mathscr{F}_r))\big)[-1].
    \end{align*}
    Note that $(\Lambda_q \cup \Lambda_r) \cap T^{*,\infty}(U_{x_i,t_i} \times (u_i-2\epsilon, u_i))$ and $(\Lambda_q \cup \Lambda_r) \cap T^{*,\infty}(U_{x_i,t_i} \times (u_i, u_i+2\epsilon))$ are movies of Legendrian isotopies. Then by Guillermou-Kashiwara-Schapira Theorem \ref{GKS} \cite{GKS}, it suffices to compute
    \begin{align*}
    \mathrm{Cone}\big( &\Gamma(U_{x_i,t_i} \times \{u_i-\epsilon\}, \mathscr{H}om(\mathscr{F}_q, \mathscr{F}_r))) \\
    &\rightarrow \Gamma(U_{x_i,t_i} \times \{u_i+\epsilon\}, \mathscr{H}om(\mathscr{F}_q, \mathscr{F}_r))\big)[-1].
    \end{align*}

    Since $\Lambda_q \cap \Lambda_r = \varnothing$, by Proposition \ref{ssforhom}
    $$\mathscr{H}om(\mathscr{F}_q, \mathscr{F}_r) \simeq D'\mathscr{F}_q \otimes \mathscr{F}_r.$$
    By Lemma \ref{localcomputemicrostalk}, as in Example \ref{combin-model} \cite[Section 3.3]{STZ} we assume that
    \begin{gather*}
    D'\mathscr{F}_q|_{U_{q,0}} \simeq (D'F)_{U_{q,0}}[-d(b)], \,\,\, \mathscr{G}_r|_{U_{r,0}^\pm} \simeq G_{U_{r,0}^\pm}[d(a)+1].
    \end{gather*}
    The following lemma ensures that when the microstalk of $\mathscr{F}$ is $F[d(b)]$, the microstalk of $D'\mathscr{F}$ is indeed $D'F[-d(b)-1]$, which justifies our assumption.

\begin{lemma}
    Let $\mathscr{F} \in Sh_{\nu^{*,\infty}_{\mathbb{R}^n \times \mathbb{R}_{>0},-}\mathbb{R}^{n+1}}^b(\mathbb{R}^{n+1})$ and $\varphi(x, t) = t$. Then
    $$\Gamma_{\varphi\leq 0}(D'\mathscr{F})_{(0,\dots,0)} = D'\Gamma_{\varphi\geq 0}(\mathscr{F})_{(0,\dots,0)}[-1].$$
\end{lemma}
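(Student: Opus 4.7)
The approach will be to reduce both sides of the identity to explicit distinguished triangles coming from the combinatorial description of $\mathscr{F}$ recalled in Example \ref{combin-model}, and then match them. Let $i\colon \{t \geq 0\} \hookrightarrow \mathbb{R}^{n+1}$ be the closed inclusion, $j\colon \{t < 0\} \hookrightarrow \mathbb{R}^{n+1}$ the open complement, and $\iota\colon \{t = 0\} \hookrightarrow \mathbb{R}^{n+1}$ the codimension-one inclusion. Example \ref{combin-model} realizes $\mathscr{F}$ as the middle term of the excision triangle
\[
j_!F_- \longrightarrow \mathscr{F} \longrightarrow i_*F_+ \xrightarrow{+1},
\]
whose connecting map encodes the generization $\phi\colon F_+ \to F_-$.

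For the right-hand side of the claimed identity, I would first apply $R\Gamma_{\varphi \geq 0}(-)_{(0)}$ to the excision triangle. Using $R\Gamma_{\varphi \geq 0}(j_!F_-)_{(0)} \simeq F_-[-1]$ and $R\Gamma_{\varphi \geq 0}(i_*F_+)_{(0)} \simeq F_+$ recovers the microstalk as in Example \ref{microstalk-cone} in the form $F_-[-1] \to R\Gamma_{\varphi \geq 0}(\mathscr{F})_{(0)} \to F_+ \xrightarrow{\phi} F_-$. Applying the exact contravariant functor $D'$ and shifting by $[-1]$ yields the triangle
\[
F_+^\vee[-1] \longrightarrow D'R\Gamma_{\varphi \geq 0}(\mathscr{F})_{(0)}[-1] \longrightarrow F_-^\vee \xrightarrow{\phi^\vee} F_+^\vee.
\]

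For the left-hand side I would use the tensor-Hom adjunction to rewrite
\[
R\Gamma_{\{t \leq 0\}}(D'\mathscr{F}) = R\mathscr{H}om(\Bbbk_{\{t \leq 0\}}, D'\mathscr{F}) \simeq D'\bigl(\Bbbk_{\{t \leq 0\}} \otimes \mathscr{F}\bigr),
\]
and tensor the excision triangle with $\Bbbk_{\{t \leq 0\}}$; the projection formula, together with $j^{-1}\Bbbk_{\{t \leq 0\}} = \Bbbk_{\{t<0\}}$ and $i^{-1}\Bbbk_{\{t \leq 0\}} = \Bbbk_{\{t=0\}}$, produces the triangle $j_!F_- \to \Bbbk_{\{t \leq 0\}} \otimes \mathscr{F} \to \iota_*F_+ \xrightarrow{+1}$. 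Applying $D'$ with the identifications $D'(j_!F_-) = Rj_*F_-^\vee$ and $D'(\iota_*F_+) = \iota_*F_+^\vee[-1]$ (the shift arising from $\iota^!\Bbbk_{\mathbb{R}^{n+1}} = \Bbbk[-1]$ for the codimension-one closed embedding $\iota$), and then taking the stalk at the origin where $(Rj_*F_-^\vee)_{(0)} \simeq F_-^\vee$ by contractibility of the lower half-ball, yields
\[
F_+^\vee[-1] \longrightarrow R\Gamma_{\{t \leq 0\}}(D'\mathscr{F})_{(0)} \longrightarrow F_-^\vee \xrightarrow{+1}.
\]

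The two triangles have matching outer terms, so the identity follows once the connecting maps are shown to agree. This is the step I expect to be the main obstacle: the outer identifications come out automatically and the shift by $[-1]$ is forced by $\iota^!\Bbbk = \Bbbk[-1]$, but verifying that the connecting map of the second triangle equals the transpose $\phi^\vee$ requires tracing the boundary map of the original excision triangle through the functor $D'(\Bbbk_{\{t \leq 0\}} \otimes -)$. This should reduce to naturality of $D'$ applied to the excision triangle for $\mathscr{F}$, and the bookkeeping there is the only nontrivial point of the argument.
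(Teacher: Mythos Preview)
Your argument is correct and takes a somewhat different route from the paper. The paper proceeds by computing $D'\mathscr{F}$ explicitly: it checks stalk by stalk that $D'\mathscr{F}$ is again of the combinatorial form in Example~\ref{combin-model}, but with the half-open structure flipped, namely $D'\mathscr{F}|_{(0,+\infty)} = D'F_+$ and $D'\mathscr{F}|_{(-\infty,0]} = D'F_-$ (the key computation being that the stalk of $D'\mathscr{F}$ at $t=0$ is $D'F_-$, not $D'F_+$). Once this is known, $R\Gamma_{\varphi\leq 0}(D'\mathscr{F})_{(0)}$ is read off as $\mathrm{Cone}(D'F_- \to D'F_+)[-1]$ directly from the combinatorial model, and matched with the dual of $\mathrm{Cone}(F_+\to F_-)[-1]$.

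Your approach bypasses the stalk computation of $D'\mathscr{F}$ by using the adjunction $R\Gamma_{\{t\leq 0\}}D' \simeq D'(\Bbbk_{\{t\leq 0\}}\otimes -)$ and then pushing the excision triangle through. This is cleaner and more functorial; the price is that the identification of the connecting map becomes the visible nontrivial step, whereas in the paper's version that step is hidden inside the claim that the generization map of $D'\mathscr{F}$ across $t=0$ is $\phi^\vee$. In both arguments this boils down to the same naturality, and your remark that it reduces to naturality of $D'$ applied to the excision triangle is exactly right. What your approach buys is that the shift by $[-1]$ has a transparent origin (the formula $\iota^!\Bbbk = \Bbbk[-1]$), while the paper's approach makes the geometric picture of $D'\mathscr{F}$ (antipodal singular support, flipped half-open intervals) more explicit.
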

\begin{proof}
    We assume that $\mathscr{F}|_{\mathbb{R}^n \times [0,+\infty)} = (F_1)_{\mathbb{R}^n \times [0,+\infty)}$ and $\mathscr{F}|_{\mathbb{R}^n \times (-\infty,0)} = (F_0)_{\mathbb{R}^n \times (-\infty,0)}$. Then we have an exact triangle
    $$\Gamma_{\varphi\geq 0}(\mathscr{F})_{(0,\dots,0)} \rightarrow F_1 \rightarrow F_0 \xrightarrow{+1}.$$
    Therefore by taking the dual we have
    $$D'F_0 \rightarrow D'F_1 \rightarrow D'\Gamma_{\varphi\geq 0}(\mathscr{F})_{(0,\dots,0)} \xrightarrow{+1}.$$
    We claim that $D'\mathscr{F}|_{\mathbb{R}^n \times (0,+\infty)} = (D'F_1)_{\mathbb{R}^n \times (0,+\infty)}$ and $D'\mathscr{F}|_{\mathbb{R}^n \times (-\infty,0]} = (D'F_0)_{\mathbb{R}^n \times (-\infty,0]}$. We will only check the stalk at $\mathbb{R}^n \times \{0\}$. In fact, since $SS^\infty(\mathscr{F}) = \nu^{*,\infty}_{\mathbb{R}^n \times \mathbb{R}_{>0},-}\mathbb{R}^{n+1}$, we have $SS^\infty(D'\mathscr{F}) = \nu^{*,\infty}_{\mathbb{R}^n \times \mathbb{R}_{>0},+}\mathbb{R}^{n+1}$. By microlocal Morse lemma Proposition \ref{morselemma}, 
    $$D'\mathscr{F}_{(0,\dots,0)} \simeq \Gamma(\mathbb{R}^{n+1}, D'\mathscr{F})
    \simeq \Gamma(\mathbb{R}^{n} \times (-\infty, 0), D'\mathscr{F}) \simeq D'F_0.$$
    Therefore we know that
    $$\Gamma_{\varphi\leq 0}(D'\mathscr{F})_{(0,\dots,0)} = \mathrm{Cone}(D'F_0 \rightarrow D'F_1)[-1] \simeq D'\Gamma_{\varphi\geq 0}(\mathscr{F})_{(0,\dots,0)}[-1].$$
    This proves the assertion.
\end{proof}

\begin{figure}
  \centering
  \includegraphics[width=0.9\textwidth]{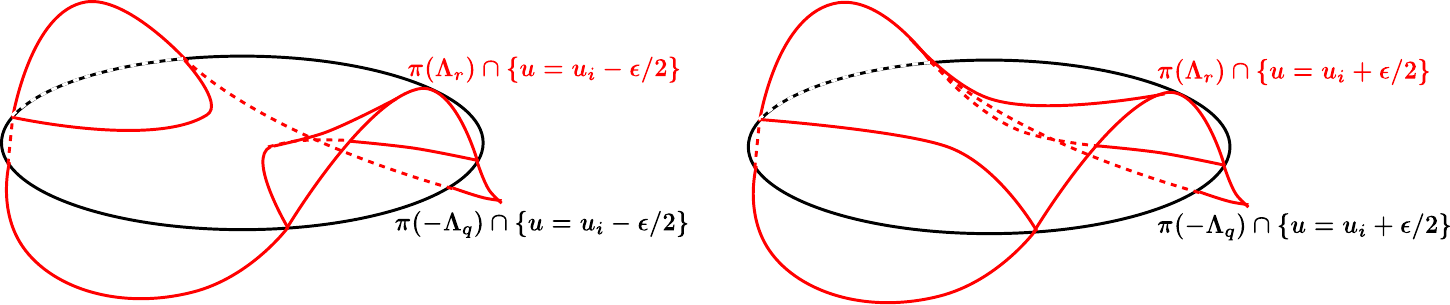}\\
  \caption{When $n=2$ and $k=1$, the open subsets $U^-$ (on the left) and $U^+$ (on the right).}\label{reebchord1}
\end{figure}

    With these preparations, we can now prove Proposition \ref{localcompute}.

\begin{proof}[Proof of Proposition \ref{localcompute}]
    First, consider the sections at $u=u_i-\epsilon$. Suppose $0 \leq k < n$. Since $U_{q,0} \cap U_{r,0}^- \cong D^{k+2} \times S^{n-k-1}$ and $\partial(U_{q,0} \cap U_{r,0}^-) \cong D^{k+1} \times S^{n-k-1}$, we know that
    \begin{align*}
    \Gamma (U^-, D'\mathscr{F}_q \otimes \mathscr{G}_r) &\simeq \Gamma(U^-, (D'F \otimes G)_{U_{q,0}\cap U_{r,0}^-})[d(a)-d(b)-1] \\
    &\simeq C^*(D^{k+2}, D^{k+1}; D'F \otimes G)[d(a)-d(b)-1] \simeq 0.
    \end{align*}
    Suppose $k = n$. Since $U_{q,0} \cap U_{r,0}^- \cong \varnothing$, the sections are
    $$\Gamma(U^-, D'\mathscr{F}_q \otimes \mathscr{G}_r) \simeq 0.$$
    Then consider sections at $u=u_i+\epsilon$. Since $U_{q,0} \cap U_{r,0}^+ \cong D^{k+1} \times D^{n-k}$ and $\partial(U_{q,0} \cap U_{r,0}^+) \cong S^k \times D^{n-k}$ for any $0 \leq k \leq n$, we have
    \begin{align*}
    \Gamma &(U^+, D'\mathscr{F}_q \otimes \mathscr{G}_r) \simeq \Gamma(U^+, (D'F \otimes G)_{U_{q,0}\cap U_{r,0}^+})[d(a)-d(b)-1] \\
    &\simeq C^*(D^{k+1}, S^k; D'F \otimes G)[d(a)-d(b)-1] = D'F \otimes G[d(a) - d(b) + k].
    \end{align*}
    Therefore, the microstalk is given by
    $$\mathrm{Cone}(\Gamma(U^-, D'\mathscr{F}_q \otimes \mathscr{G}_r) \to  \Gamma(U^+, D'\mathscr{F}_q \otimes \mathscr{G}_r) )[-1] \simeq D'F \otimes G[d(a)-d(b)+k-1].$$
    Hence the proof is completed.
\end{proof}

    When $u < 0$, we consider $\{(x_i, 0, t_i, 0, u_i, \nu_i)\}_{i\in I}$ be the set
    $$((-\Lambda_q) + \Lambda_r) \cap \{(x, 0, t, 0, u, \nu) \mid u<0, \nu < 0\}.$$
    The calculation in Proposition \ref{localcompute} still holds, except that we have to be careful about the gradings.

    We always assume that in our local model, when $u$ increases, the point $a$ is moving up in the horizontal $u$-direction passing through $b$. In the case of $u>0$, the point $(x_i, 0, t_i, 0, u_i, \nu_i)$ comes from a Reeb chord connecting $a$ to $b$ where $b$ is above $a$, and as $u > 0$ increases from $0$, $b$ is fixed and $a$ is moving up. $\mathrm{Graph}(h_b), \mathrm{Graph}(h_a)$ are local models of $\pi_\text{front}(\Lambda)$ at $b, a$, and in local coordinates
    $$h_b(x) = u_i > 0, \,\,\, h_a(x) = -\sum_{i\leq k}x_i^2 + \sum_{j\geq k+1}x_j^2.$$
    However in the case of $u<0$, the point $(x_i, 0, t_i, 0, u_i, \nu_i)$ will then come from a Reeb chord connecting $b$ to $a$ where $a$ is above $b$, and now as $u < 0$ increases to $0$, $a$ is moving up and yet $b$ is fixed. In local coordinates
    $$h_b(x) = u_i < 0, \,\,\, h_a(x) = -\sum_{i\leq k}x_i^2 + \sum_{j\geq k+1}x_j^2.$$
    Then that the Morse index $\mathrm{ind}(D^2h_{ba})$ where $h_{ba} = h_a - h_b$ will become $k$ instead of $n-k$ (the order of $a$ and $b$ are switched as their heights are switched). Thus if the degree of the original chord is $d_i$, the degree shifting will be
    $$-d(b)-1 + d(a) - k = -d(b)-1+d(a)-\mathrm{ind}(D^2h_{ba}) = -n+d_i-2.$$

\begin{proposition}\label{localcomputemix}
    For $\Lambda \subset T^{*,\infty}_{\tau > 0}(M \times \mathbb{R})$ a chord generic Legendrian and $\mathscr{F, G} \in Sh^b_\Lambda(M \times \mathbb{R})$ sheaves with perfect stalks and microstalks $F$ and $G$, let $\{(x_i, 0, t_i, 0, u_i, \nu_i)\}_{i\in I}$ be the set
    $$((-\Lambda_q) + \Lambda_r) \cap \{(x, 0, t, 0, u, \nu) \mid u<0, \nu < 0\}.$$
    Suppose $(x_i, t_i, u_i)$ corresponding to a degree $d_i$ Reeb chord  starting from $(x_i, \xi_i, t_i, 1)$ to $(x_i, \xi_i, t_i+u_i, 1)$ in Lemma \ref{reebchord-hom}. Then
    $$\Gamma_{u\leq u_i}(\mathscr{H}om(\mathscr{F}_q, \mathscr{G}_r))_{(x_i, t_i, u_i)} \simeq \mu hom(\mathscr{F}, T_{u_i*}\mathscr{G})_{(x_i,\xi_i,t_i,1)} \simeq Hom(F, G)[-n+d_i-2].$$
\end{proposition}

\subsection{Application to the Morse Inequality}
    Combining the previous propositions, we are able to prove the main theorems \ref{puresheaf} and \ref{mixedsheaf} using duality exact sequence. The main ingredient for these theorems will be the following Morse inequalities.

\begin{theorem}[Theorem \ref{reebinequality}]\label{reebinequality2}
    For $\Lambda \subset T^{*,\infty}_{\tau > 0}(M \times \mathbb{R})$ a closed chord generic Legendrian and $\mathscr{F} \in Sh^b_\Lambda(M \times \mathbb{R})$ a microlocal rank $r$ sheaf, let $\mathcal{Q}_j(\Lambda)$ be the set of degree $j$ Reeb chords on $\Lambda$. Suppose $\mathrm{supp}(\mathscr{F})$ is compact. Then for any $k \in \mathbb{Z}$
    $$r^2\sum_{j\leq k}(-1)^{k-j}|\mathcal{Q}_j(\Lambda)| \geq \sum_{j\leq k}(-1)^{k-j}\dim H^jHom_+(\mathscr{F, F}).$$
    In particular, for any $j\in \mathbb{Z}$, $r^2|\mathcal{Q}_j(\Lambda)| \geq \dim H^jHom_+(\mathscr{F, F})$.
\end{theorem}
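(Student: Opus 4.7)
The strategy is to apply the microlocal Morse inequality (Proposition \ref{morseinequality}) to the sheaf $\mathscr{H}om(\mathscr{F}_q, \mathscr{F}_r)$ using a function whose differential lies in the $-du$ direction, so that Lemma \ref{reebchord-hom} places the critical points in bijection with Reeb chords of $\Lambda$. The support of $\mathscr{H}om(\mathscr{F}_q, \mathscr{F}_r) \simeq D'\mathscr{F}_q \otimes \mathscr{F}_r$ is compact in $M \times \mathbb{R}^2$ by the argument of Corollary \ref{acyclic}, but it generally extends down to $u = 0$, so the naive choice $\varphi = -u$ on the open subset $u > 0$ fails to be proper on the support. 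To remedy this, I pick $\epsilon > 0$ smaller than the minimal Reeb chord length; since Lemma \ref{reebchord-hom} guarantees no critical points in $u \in (0,\epsilon]$, the microlocal Morse lemma gives
$$Hom_+(\mathscr{F, F}) = \Gamma(u^{-1}((0,+\infty)), \mathscr{H}om(\mathscr{F}_q, \mathscr{F}_r)) \simeq \Gamma(u^{-1}((\epsilon,+\infty)), \mathscr{H}om(\mathscr{F}_q, \mathscr{F}_r)).$$
On $u^{-1}((\epsilon,+\infty))$ I then take $\varphi(x,t,u) = -u + (u-\epsilon)^{-1}$, which is proper on the support (diverging at $u \to \epsilon^+$, while the support is bounded above in $u$) and whose differential $d\varphi = -\bigl(1 + (u-\epsilon)^{-2}\bigr)du$ is a strictly negative multiple of $du$.

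By Lemma \ref{reebchord-hom}, the critical points of $\varphi$ in $SS(\mathscr{H}om(\mathscr{F}_q, \mathscr{F}_r))$ are in bijection with the Reeb chords $\gamma_i$ of $\Lambda$ (all of which have length $> \epsilon$). Since $\varphi$ is monotone decreasing in $u$, the condition $\varphi \geq \varphi(x_i)$ coincides locally with $u \leq u_i$, so Proposition \ref{localcompute} yields
$$V_i := \Gamma_{\varphi \geq \varphi(x_i)}(\mathscr{H}om(\mathscr{F}_q, \mathscr{F}_r))_{(x_i,t_i,u_i)} \simeq Hom(F, F)[-d_i] \simeq \Bbbk^{r^2}[-d_i],$$
where the microstalk $F \simeq \Bbbk^r$ is concentrated in a single degree by the microlocal rank $r$ hypothesis and $d_i = \deg(\gamma_i)$. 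Hence $H^j(V_i) = \Bbbk^{r^2}$ if $j = d_i$ and vanishes otherwise. Substituting into Proposition \ref{morseinequality} gives
$$\sum_{j \leq k}(-1)^{k-j}\dim H^j Hom_+(\mathscr{F, F}) \leq \sum_i\sum_{j \leq k}(-1)^{k-j}\dim H^j(V_i) = r^2\sum_{j \leq k}(-1)^{k-j}|\mathcal{Q}_j(\Lambda)|,$$
which is the stated alternating-sum bound. The pointwise estimate $r^2|\mathcal{Q}_j(\Lambda)| \geq \dim H^j Hom_+(\mathscr{F, F})$ then follows by adding the inequality at levels $k$ and $k-1$, since the alternating sums $A_k := \sum_{j\leq k}(-1)^{k-j}\dim H^j$ satisfy $A_k + A_{k-1} = \dim H^k$, and likewise on the right-hand side.

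The central technical obstacle is the choice of Morse function: because $\mathscr{H}om(\mathscr{F}_q, \mathscr{F}_r)$ has support accumulating at $u = 0$, one cannot apply Proposition \ref{morseinequality} directly with $\varphi = -u$ on $u > 0$. The truncation-and-blowup above is the natural workaround, and it introduces no extraneous critical points because $d\varphi$ remains a positive multiple of $-du$. Once this is set up, the remaining steps are essentially bookkeeping: Lemma \ref{reebchord-hom} identifies the critical points with Reeb chords, Proposition \ref{localcompute} identifies the local microstalk contributions with shifts of $Hom(F,F) \simeq \Bbbk^{r^2}$, and the Morse inequality assembles these into the stated global bound.
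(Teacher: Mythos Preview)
Your proof is correct and follows essentially the same approach as the paper: apply the microlocal Morse inequality (Proposition \ref{morseinequality}) to $\mathscr{H}om(\mathscr{F}_q,\mathscr{F}_r)$ with a function whose differential points in the $-du$ direction, then invoke Lemma \ref{reebchord-hom} and Proposition \ref{localcompute} to identify the critical contributions with $\Bbbk^{r^2}[-d_i]$. The paper simply takes $\varphi_-(x,t,u)=-u$ on $u^{-1}((0,+\infty))$ without commenting on properness; your truncation to $u>\epsilon$ together with the choice $\varphi=-u+(u-\epsilon)^{-1}$ is a clean way to make the properness hypothesis of Proposition \ref{morseinequality} hold rigorously, and it changes neither the critical set nor the local sections $\Gamma_{\varphi\ge\varphi(x_i)}(-)_{(x_i,t_i,u_i)}$, since $d\varphi$ remains a positive scalar multiple of $-du$.
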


\begin{theorem}\label{mixedinequality}
    For $\Lambda \subset T^{*,\infty}_{\tau > 0}(M \times \mathbb{R})$ a closed chord generic Legendrian and $\mathscr{F} \in Sh^b_\Lambda(M \times \mathbb{R})$ a sheaf with prefect microstalk $F$, let $\mathcal{Q}_j(\Lambda)$ be the set of degree $j$ Reeb chords on $\Lambda$. Suppose $\mathrm{supp}(\mathscr{F})$ is compact. Then for any $k \in \mathbb{Z}$
    $$\sum_{j\leq k}(-1)^{k-j}\sum_{i\in \mathbb{Z}}\dim H^iHom(F, F)|\mathcal{Q}_{j-i}(\Lambda)| \geq \sum_{j\leq k}(-1)^{k-j}\dim H^jHom_+(\mathscr{F, F}).$$
    In particular, for any $j\in \mathbb{Z}$,
    $$\sum_{i\in \mathbb{Z}}\dim H^iHom(F, F)|\mathcal{Q}_{j-i}(\Lambda)| \geq \dim H^jHom_+(\mathscr{F, F}).$$
\end{theorem}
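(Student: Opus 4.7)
The proof parallels that of Theorem \ref{reebinequality2}, with Proposition \ref{localcompute} supplying the microstalk identification; in the rank $r$ case the microstalks were all concentrated in a single degree with dimension $r^2$, while here they are $Hom(F, F)[-d]$ with $d$ the degree of the relevant Reeb chord. The plan is to apply the microlocal Morse inequality (Proposition \ref{morseinequality}) to $\mathscr{H}om(\mathscr{F}_q, \mathscr{F}_r)$ on a region where the $u$-direction singles out Reeb chord contributions, and then to collect terms degree by degree.

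First I would reduce to a region on which the Morse function is proper. Pick $a_0 > 0$ smaller than the length of every Reeb chord of $\Lambda$. By Lemma \ref{reebchord-hom}, $-du \notin SS^\infty(\mathscr{H}om(\mathscr{F}_q, \mathscr{F}_r))$ over $u^{-1}((0, a_0])$; together with the compactness of $\mathrm{supp}(\mathscr{H}om(\mathscr{F}_q, \mathscr{F}_r))$ (established as in Corollary \ref{acyclic} from $\mathscr{H}om \simeq D'\mathscr{F}_q \otimes \mathscr{F}_r$ and the compactness of $\mathrm{supp}(\mathscr{F})$), Proposition \ref{morselemma} applied with $f = -u$ yields
$$Hom_+(\mathscr{F, F}) = \Gamma(u^{-1}((0,+\infty)), \mathscr{H}om(\mathscr{F}_q, \mathscr{F}_r)) \xrightarrow{\sim} \Gamma(u^{-1}((a_0,+\infty)), \mathscr{H}om(\mathscr{F}_q, \mathscr{F}_r)).$$

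Next I would apply Proposition \ref{morseinequality} on $u^{-1}((a_0, +\infty))$ with a smooth proper function $\varphi$ whose differential lies in the ray $\mathbb{R}_{<0}\,du$ everywhere, for instance $\varphi(x, t, u) = -u - \log(u - a_0)$. Properness on the support of $\mathscr{H}om(\mathscr{F}_q, \mathscr{F}_r)|_{u > a_0}$ follows because $\varphi$ tends to $+\infty$ as $u \to a_0^+$ and to $-\infty$ as $u \to +\infty$, and the support is bounded in the other coordinates. The critical points, where $d\varphi \in SS$, coincide by $\mathbb{R}_{>0}$-invariance of the singular support with the locus $-du \in SS(\mathscr{H}om(\mathscr{F}_q, \mathscr{F}_r))$, which by Lemma \ref{reebchord-hom} is in bijection with the Reeb chords of $\Lambda$; chord genericity makes this set finite. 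At the critical point $(x_\gamma, t_\gamma, u_\gamma)$ attached to $\gamma \in \mathcal{Q}_d(\Lambda)$ the microstalk is
$$\Gamma_{\varphi \geq \varphi(x_\gamma)}(\mathscr{H}om(\mathscr{F}_q, \mathscr{F}_r))_{(x_\gamma, t_\gamma, u_\gamma)} = \Gamma_{u \leq u_\gamma}(\mathscr{H}om(\mathscr{F}_q, \mathscr{F}_r))_{(x_\gamma, t_\gamma, u_\gamma)} \simeq Hom(F, F)[-d]$$
by Proposition \ref{localcompute}, whose $j$-th cohomology has dimension $\dim H^{j - d} Hom(F, F)$.

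Plugging these microstalks into Proposition \ref{morseinequality} yields
$$\sum_{j \leq k}(-1)^{k-j}\dim H^j Hom_+(\mathscr{F, F}) \leq \sum_{\gamma \in \mathcal{Q}(\Lambda)}\sum_{j\leq k}(-1)^{k-j}\dim H^{j - \deg \gamma} Hom(F, F).$$
Collecting chords of common degree $d$ and reindexing with $i = j - d$ rewrites the right-hand side as $\sum_{j \leq k}(-1)^{k-j} \sum_i \dim H^i Hom(F, F) \cdot |\mathcal{Q}_{j - i}(\Lambda)|$, which is the stated alternating-sum bound; the termwise estimate follows from the standard observation that if the partial alternating sums satisfy $S_k \leq T_k$ for all $k$, then $a_k = S_k + S_{k-1} \leq T_k + T_{k-1} = b_k$. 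The main step is the microstalk identification, which has already been carried out in Proposition \ref{localcompute}; otherwise the proof is routine, with the only technicality being the choice of a proper Morse function on the open region $u > a_0$ so that Proposition \ref{morseinequality} is directly applicable.
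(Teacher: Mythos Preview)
Your argument is correct and follows the same route as the paper: restrict to the region $u>0$, apply the microlocal Morse inequality (Proposition~\ref{morseinequality}) in the $-du$ direction, and read off the microstalks from Proposition~\ref{localcompute}. The paper simply takes $\varphi_-(x,t,u)=-u$ and invokes compactness of $\mathrm{supp}(\mathscr{H}om(\mathscr{F}_q,\mathscr{F}_r))$, while you insert the preliminary restriction to $u>a_0$ and the auxiliary function $-u-\log(u-a_0)$ to make properness on the open region explicit; this is a cosmetic difference, not a different strategy.
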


\begin{proof}[Proof of Theorem \ref{reebinequality2} and \ref{mixedinequality}]
    By Proposition \ref{localcompute}, it suffices to prove a Morse-type inequality on the rank of microstalks
    $$\Gamma_{u\leq u_i}(\mathscr{H}om(\mathscr{F}_q, \mathscr{F}_r))_{(x_i, t_i, u_i)}.$$
    By Lemma \ref{reebchord-hom}, we know that $SS^\infty(\mathscr{H}om(\mathscr{F}_q, \mathscr{F}_r)) \subset (-\Lambda_q) + \Lambda_r.$ As in the proof of Corollary \ref{acyclic}, we know that $\mathrm{supp}(\mathscr{H}om(\mathscr{F}_q, \mathscr{F}_r))$ is compact. Consider $\varphi_-(x, t, u) = -u$. Then
    $$SS(\mathscr{H}om(\mathscr{F}_q, \mathscr{F}_r)) \cap \mathrm{Graph}(d\varphi_-) \cap u^{-1}((0,+\infty)) \subset \{(x_i, 0, t_i, 0, u_i, \nu_i)\}_{i\in I}.$$
    Now the result follows from the microlocal Morse inequality Proposition \ref{morseinequality}.
\end{proof}

    Now the main theorems \ref{puresheaf} and \ref{mixedsheaf} follow immediately from previous results.

\begin{proof}[Proof of Theorem \ref{puresheaf} and \ref{mixedsheaf}]
    Theorem \ref{puresheaf} immediately follows from Theorems \ref{duality2}, \ref{exactseq2} and \ref{reebinequality2}. For Theorem \ref{mixedsheaf}, by Theorems \ref{duality2} and \ref{mixedinequality} we know that
    \[\begin{split}
    \sum_{j\in \mathbb{Z}}\sum_{i\in \mathbb{Z}}&\dim H^iHom(F, F)|\mathcal{Q}_{j-i}(\Lambda)| \geq \sum_{j\in \mathbb{Z}}\dim H^jHom_+(\mathscr{F, F}) \\
    &\geq \frac{1}{2} \sum_{i\in \mathbb{Z}}\dim H^iHom(F, F) \sum_{j=0}^n\dim H^j(\Lambda).
    \end{split}\]
    Now the theorem follows.
\end{proof}

\subsection{Application to the Persistence Module}\label{thmpersist}
    We now apply the results to relate persistence structure to Reeb chords. We first reprove Theorem \ref{puresheaf}, \ref{mixedsheaf} using persistence of $\mathscr{H}om_{(-\infty,+\infty)}(\mathscr{F}, \mathscr{F})$, and then prove Theorem \ref{displaceable} using the continuity of persistence of $\mathscr{H}om_{(-\infty,+\infty)}(\mathscr{F}, \Phi_H^1(\mathscr{F}))$ under Hamiltonian isotopies.

\begin{proof}[Proof of Theorem \ref{puresheaf} and \ref{mixedsheaf}]
    Consider the sheaf $\mathscr{H}om_{(-\infty,+\infty)}(\mathscr{F}, \mathscr{F})$. We know
    $$\mathscr{H}om_{(-\infty,+\infty)}(\mathscr{F}, \mathscr{F}) = u_*\mathscr{H}om(\mathscr{F}_q, \mathscr{G}_r) \simeq \bigoplus_{\alpha \in I}\Bbbk^{r_\alpha}_{(c_\alpha, c_\alpha']}[n_\alpha].$$
    Since $u: M \times \mathbb{R}^2 \rightarrow \mathbb{R}$ is proper on $\mathrm{supp}(\mathscr{H}om(\mathscr{F}_q, \mathscr{G}_r))$, we know that
    $$\Gamma_{u\leq c}(u_*\mathscr{H}om(\mathscr{F}_q, \mathscr{G}_r))_{c} \simeq u_*\Gamma_{u\leq c}(\mathscr{H}om(\mathscr{F}_q, \mathscr{G}_r))_{u^{-1}(c)}.$$
    On the other hand, given a bar $\Bbbk_{(c, c']}$, we know that
    $$\Gamma_{u\leq c}(\Bbbk_{(c, c']})_c \simeq \Bbbk[-1], \,\,\, \Gamma_{u\leq c'}(\Bbbk_{(c, c']})_{c'} \simeq \Bbbk.$$
    Hence by Proposition \ref{localcompute} we will determine the number of starting point/ending point of bars from the rank of the microstalk.

    By Corollary \ref{morseflow}, we know that in degree $j + 1$, there are at least $\dim H^j(\Lambda; \Bbbk^{r^2})$ starting points or ending points of bars at $u = 0$. The starting points of such bars should come from bars of the form $\Bbbk_{(0,c_+]}[-j]$ while the ending points of bars should come from bars of the form $\Bbbk_{(c_-,0]}[-j-1]$. By Lemma \ref{reebchord-hom}, the other ending point/starting point of these bars will correspond to signed lengths of Reeb chords in $\mathcal{Q}_\pm(\Lambda)$. By Proposition \ref{localcompute}, we know that for $c_+ > 0$ that corresponds to a degree $d_+$ Reeb chord, the microstalk
    $$\Gamma_{u\leq c_+}(u_*\mathscr{H}om(\mathscr{F}_q, \mathscr{G}_r))_{c_+} \simeq \Bbbk^{r^2}[-d_+].$$
    Hence the corresponding ending point of a bar $\Bbbk_{(0,c_+]}[-j]$ should be a degree $j$ Reeb chord. Similarly for $c_- < 0$ that corresponds to a degree $d_-$ Reeb chord, by Proposition \ref{localcomputemix} the microstalk
    $$\Gamma_{u\leq c_-}(u_*\mathscr{H}om(\mathscr{F}_q, \mathscr{G}_r))_{c_-} \simeq \Bbbk^{r^2}[- n-2 + d_-].$$
    Hence the corresponding starting point of a bar $\Bbbk_{(c_-,0]}[-j-1]$ should be a degree $n - j$ Reeb chord. Therefore
    $$r^2|\mathcal{Q}_j(\Lambda)| + r^2|\mathcal{Q}_{n-j}(\Lambda)| \geq r^2 \dim H^j(\Lambda; \Bbbk).$$
    This proves Theorem \ref{puresheaf}. The proof of Theorem \ref{mixedsheaf} is similar.
\end{proof}

    Finally we prove Theorem \ref{displaceable}, which gives estimates on the Reeb chords between $\Lambda$ and its Hamiltonian pushoff $\varphi_H^1(\Lambda)$ for a contact Hamiltonian flow $\varphi_H^s\,(s\in I)$.

\begin{proof}[Proof of Theorem \ref{displaceable}]
    Consider the sheaf $\mathscr{H}om_{(-\infty,+\infty)}(\mathscr{F}, \mathscr{F})$. We know from the previous proof that starting points and ending points of bars at $u = 0$ in degree $j + 1$ correspond to a basis of $H^j(\Lambda; \Bbbk^{r^2})$. In addition, the corresponding ending point of a bar $\Bbbk_{(0,c_+]}[-j]$ should be a degree $j$ Reeb chord, and the corresponding starting point of a bar $\Bbbk_{(c_-,0]}[-j-1]$ should be a degree $n - j$ Reeb chord. The lengths of these bars at time $s = 0$ will be at least
    $$c_j(\Lambda) = c_{n-j}(\Lambda) = \min\{l(\gamma) \mid \gamma \in \mathcal{Q}_j(\Lambda) \cup \mathcal{Q}_{n-j}(\Lambda)\}.$$
    Consider the Hamiltonian $\varphi_H^s\,(s\in I)$. Since
    $$\|H\|_\text{osc} < c_{j_0}(\Lambda), \dots, c_{j_k}(\Lambda),$$
    we know by Theorem \ref{persistcontinue2} that these bars will survive in $\mathscr{H}om_{(-\infty,+\infty)}(\mathscr{F}, \Phi_H^1(\mathscr{F}))$.

    We claim that each bar in $\mathscr{H}om_{(-\infty,+\infty)}(\mathscr{F}, \Phi_H^1(\mathscr{F}))$ corresponds to a Reeb chord between $\Lambda$ and $\varphi_H^1(\Lambda)$. Namely the proof is similar to Lemma \ref{reebchord-hom}. Note that $\Lambda_q \cap (\varphi_H^1(\Lambda))_r = \emptyset$, so $(u, \nu) \in SS^\infty(\mathscr{H}om_{(-\infty,+\infty)}(\mathscr{F}, \Phi_H^1(\mathscr{F})))$ iff
    $$(x, 0, t, 0, u, \nu) \in (-\Lambda_q) + (\varphi_H^1(\Lambda))_r,$$
    iff there exists $(x, \xi, t, \tau) \in \Lambda, (x, \xi, t+u, \tau) \in \varphi_H^1(\Lambda)$ (and $\nu = -\tau$). In addition, the computation of microstalks in Proposition \ref{localcompute} still holds. Hence the endpoints of bars count Reeb chords both from $\Lambda$ to $\varphi_H^1(\Lambda)$ and from $\varphi_H^1(\Lambda)$ back to $\Lambda$, i.e.~the chords between $\Lambda$ and $\varphi_H^1(\Lambda)$. Thus
    $$r^2|\mathcal{Q}(\Lambda, \varphi_H^1(\Lambda))| \geq r^2 \sum_{0\leq i\leq k} \dim H^{j_i}(\Lambda; \Bbbk).$$
    This completes the proof of the theorem.
\end{proof}

\subsection{Horizontal displaceability}\label{horizontal}
    As is mentioned in Remark \ref{hori}, we show that for all horizontally displaceable closed Legendrians $\Lambda \subset T^{*,\infty}_{\tau > 0}(M \times \mathbb{R})$, $\mathscr{F} \in Sh^b_\Lambda(M \times \mathbb{R})$ with zero stalk near $M \times \{-\infty\}$ necessarily has compact support. Note that under the assumption that $M$ is noncompact, such $\mathscr{F} \in Sh^b_\Lambda(M \times \mathbb{R})$ will always have compact support as the front projection $\pi(\Lambda)$ is compact in $M \times \mathbb{R}$, so we only need to consider the case where $M$ is compact.

    Recall that $\Lambda \subset T^{*,\infty}_{\tau > 0}(M \times \mathbb{R})$ is horizontally displaceable if there is a Hamiltonian flow $\varphi_H^s\,(s \in I)$ such that there are no Reeb chords between $\Lambda$ and $\varphi_H^1(\Lambda)$.

\begin{lemma}\label{hori-hom}
    Let $\Lambda, \Lambda' \subset T^{*,\infty}_{\tau > 0}(M \times \mathbb{R})$ be closed Legendrians, and $\mathscr{F} \in Sh^b_{\Lambda}(M \times \mathbb{R}), \mathscr{F}' \in Sh^b_{\Lambda'}(M \times \mathbb{R})$ such that the stalks near $M \times \{-\infty\}$ are zero. Suppose there are no Reeb chords between $\Lambda$ and $\Lambda'$. Then for any $c \in \mathbb{R}$,
    $$Hom(\mathscr{F}, T_{c*}\mathscr{F}') \simeq 0.$$
\end{lemma}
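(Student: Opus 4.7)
The plan is to show, via the persistence sheaf $u_*\mathscr{H}om(\mathscr{F}_q, \mathscr{F}'_r)$ on $\mathbb{R}$, that $Hom(\mathscr{F}, T_{c,*}\mathscr{F}')$ is independent of $c$ with all transition maps being isomorphisms, and then to use the vanishing of the filtered colimit $\varinjlim_{a \to +\infty} T_{a,*}\mathscr{F} = 0$ (coming from the zero-stalk hypothesis on $\mathscr{F}$) together with the fact that $Hom(-, \mathscr{F}')$ sends colimits to limits, to force this constant value to be zero.

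For the persistence step, I would first observe that by Lemma~\ref{reebchord-hom}, the no-Reeb-chord hypothesis implies $SS^\infty(\mathscr{H}om(\mathscr{F}_q, \mathscr{F}'_r))$ contains no covector in the $\pm du$ direction. Hence $u_*\mathscr{H}om(\mathscr{F}_q, \mathscr{F}'_r) \in Sh^b(\mathbb{R})$ has empty singular support at infinity and is locally constant; since $\mathbb{R}$ is contractible, it is constant with some value $V$. Adapting the Guillermou-Kashiwara-Schapira argument in the proof of Theorem~\ref{computehom} Part 1---where the no-Reeb-chord hypothesis, previously used only for small $c$, is now globally valid---the stalk at each $c \in \mathbb{R}$ is identified with $Hom(\mathscr{F}, T_{c,*}\mathscr{F}') \simeq V$, and for $c_0 \leq c_1$ the transition map is identified, under $T_{c,*}\mathscr{F}' = \Bbbk_{[c,+\infty)} \star \mathscr{F}'$, with the natural map induced by $\Bbbk_{[c_0,+\infty)} \to \Bbbk_{[c_1,+\infty)}$. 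By local constancy, every such transition is an isomorphism.

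For the colimit step, the zero-stalk hypothesis gives some $a_0$ with $\mathscr{F}|_{M \times (-\infty, a_0)} = 0$, so the stalk $(T_{a,*}\mathscr{F})_{(x,t)} = \mathscr{F}_{(x, t-a)}$ vanishes whenever $a > t - a_0$; since filtered colimits of sheaves on a locally compact space commute with stalks, $\varinjlim_{a \to +\infty} T_{a,*}\mathscr{F} = 0$ in $Sh^b(M \times \mathbb{R})$. Using the adjunction identity $Hom(\mathscr{F}, T_{c,*}\mathscr{F}') \simeq Hom(T_{-c,*}\mathscr{F}, \mathscr{F}')$, the persistence step shows $Hom(T_{a,*}\mathscr{F}, \mathscr{F}')$ is also independent of $a$ with isomorphism transitions. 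Since $Hom(-, \mathscr{F}')$ sends filtered colimits to inverse limits,
$$0 = Hom(0, \mathscr{F}') = Hom\Bigl(\varinjlim_{a \to +\infty} T_{a,*}\mathscr{F},\, \mathscr{F}'\Bigr) = \varprojlim_{a \to +\infty} Hom(T_{a,*}\mathscr{F}, \mathscr{F}') = V,$$
with the final equality holding because the inverse limit of a constant system with isomorphism transition maps is the constant value. This gives $Hom(\mathscr{F}, T_{c,*}\mathscr{F}') = V = 0$ for every $c$, as desired. The main technical difficulty is in the persistence step, specifically verifying that the transition maps of the locally constant sheaf $u_*\mathscr{H}om(\mathscr{F}_q, \mathscr{F}'_r)$, which are isomorphisms by local constancy, do correspond to the natural translation maps on $Hom(\mathscr{F}, T_{c,*}\mathscr{F}')$; this amounts to carefully reworking the GKS argument from the proof of Theorem~\ref{computehom} Part 1 over arbitrary intervals in $\mathbb{R}$, which is possible precisely because the no-Reeb-chord hypothesis is global.
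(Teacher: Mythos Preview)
Your proposal is correct, and shares its opening move with the paper: both use the no-Reeb-chord hypothesis (via the obvious two-Legendrian extension of Lemma~\ref{reebchord-hom}) to see that $u_*\mathscr{H}om(\mathscr{F}_q, \mathscr{F}'_r)$ is a constant sheaf on $\mathbb{R}$, with stalk at $c$ identified with $Hom(\mathscr{F}, T_{c,*}\mathscr{F}')$.

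Where you diverge is in showing this constant value $V$ is zero. The paper does it by evaluating at a single slice: for $c \gg 0$ the front of $T_{-c}(\Lambda')$ lies below the zero-stalk region of $\mathscr{F}$, so $\mathscr{H}om(\mathscr{F}, T_{-c,*}\mathscr{F}') \simeq D'\mathscr{F} \otimes T_{-c,*}\mathscr{F}'$ has $SS^\infty \subset -\Lambda \subset \{\tau < 0\}$, and one more application of the microlocal Morse lemma (now with the function $t$) gives $Hom(\mathscr{F}, T_{-c,*}\mathscr{F}') \simeq \Gamma(M \times (-\infty, -C), -) = 0$. Your route is categorical instead: you assemble the $Hom(T_{a,*}\mathscr{F}, \mathscr{F}')$ into an inverse system, invoke $\varinjlim_{a}T_{a,*}\mathscr{F} = 0$ from the zero-stalk hypothesis, and use that $Hom(-, \mathscr{F}')$ converts filtered colimits to limits to get $V = \varprojlim V = 0$.

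Both are valid. The paper's argument stays entirely within the microlocal Morse toolkit and is shorter; it needs no identification of transition maps, only the constancy of the pushforward and one slice-computation. Your argument trades that slice-level singular-support analysis for a limit argument, at the price of the bookkeeping you flag as the ``main technical difficulty'': verifying that the natural translation maps $Hom(T_{a',*}\mathscr{F}, \mathscr{F}') \to Hom(T_{a,*}\mathscr{F}, \mathscr{F}')$ really are the restriction maps of the constant sheaf. That verification does go through by globalizing the GKS computation in Theorem~\ref{computehom} Part~1, exactly as you say, since the absence of Reeb chords makes $(\Lambda_q \cup \Lambda'_r)$ a Legendrian movie over all of $\mathbb{R}$; but it is more work than the paper's single Morse-lemma step.
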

\begin{proof}
    We know that
    $$\Gamma_{u\leq c}(u_*\mathscr{H}om(\mathscr{F}_q, \mathscr{F}'_r))_{c} \simeq u_*\Gamma_{u\leq c}(\mathscr{H}om(\mathscr{F}_q, \mathscr{F}'_r))_{u^{-1}(c)}.$$
    Therefore since there are no Reeb chords between $\Lambda$ and $\Lambda'$, by Lemma \ref{reebchord-hom}, we know that $\mathscr{H}om_{(-\infty,+\infty)}(\mathscr{F, F}') = u_*\mathscr{H}om(\mathscr{F}_q, \mathscr{F}'_r)$ is a constant sheaf on $\mathbb{R}$.

    First, consider $C \in \mathbb{R}$ such that the front projection $\pi_{M \times \mathbb{R}}(\Lambda')$ is contained in $M \times (-C, C)$. Then, consider $u = -c$ is sufficiently small so that the front projection $\pi_{M \times \mathbb{R}}(T_{-c}(\Lambda'))$ is contained in $M \times (-\infty, -C)$. Let $i_{u=-c}$ be the inclusion $M \times \mathbb{R} \times \{-c\} \hookrightarrow M \times \mathbb{R}^2$. Then as Proposition \ref{ssforhom} implies that
    $$i_{u=-c}^{-1}\mathscr{H}om(\mathscr{F}_q, \mathscr{F}'_r) = \mathscr{H}om(\mathscr{F}, T_{-c*}\mathscr{F}') \simeq D'\mathscr{F} \otimes T_{-c*}\mathscr{F}',$$
    and the stalk of $\mathscr{F}$ is zero near $\pi_{M \times \mathbb{R}}(\Lambda')$, it is implied that
    $$SS^\infty(i_{u=-c}^{-1}\mathscr{H}om(\mathscr{F}_q, \mathscr{F}'_r)) \subset (-\Lambda) \subset T^{*,\infty}_{\tau < 0}(M \times \mathbb{R}).$$
    By microlocal Morse lemma we can conclude that
    $$\Gamma(M \times \mathbb{R}, i_{u=-c}^{-1}\mathscr{H}om(\mathscr{F}_q, \mathscr{F}'_r)) \simeq \Gamma(M \times (-\infty, -C), i_{u=-c}^{-1}\mathscr{H}om(\mathscr{F}_q, \mathscr{F}'_r)) \simeq 0.$$
    Since $\mathscr{H}om_{(-\infty,+\infty)}(\mathscr{F, F}')$ is constant this shows the assertion.
\end{proof}

\begin{proposition}\label{horizontal-displace}
    Let $M$ be compact. If $\Lambda \subset T^{*,\infty}_{\tau > 0}(M \times \mathbb{R})$ is horizontally displaceable, then any $\mathscr{F} \in Sh^b_\Lambda(M \times \mathbb{R})$ with perfect stalks that has zero stalk near $M \times \{-\infty\}$ will have compact support.
\end{proposition}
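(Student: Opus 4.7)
The plan is to apply the persistence machinery of Section~\ref{persist} together with Lemma~\ref{hori-hom}. Choose a compactly supported Hamiltonian $H$ realizing the horizontal displacement of $\Lambda$ and set $\mathscr{F}' = \Phi_H^1(\mathscr{F})$. Since there are no Reeb chords between $\Lambda$ and $\varphi_H^1(\Lambda)$, the argument in the proof of Lemma~\ref{hori-hom} applies at every value of $u \in \mathbb{R}$: the sheaf $\mathscr{H}om_{(-\infty,+\infty)}(\mathscr{F}, \mathscr{F}') = u_* \mathscr{H}om(\mathscr{F}_q, \mathscr{F}'_r)$ has singular support with vanishing $\nu$-component (no chord contributions), hence is locally constant on $\mathbb{R}$, and each of its stalks equals some $Hom(\mathscr{F}, T_{c,*}\mathscr{F}')$ which vanishes by Lemma~\ref{hori-hom}. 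Therefore $\mathscr{H}om_{(-\infty,+\infty)}(\mathscr{F}, \mathscr{F}') \simeq 0$.

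Theorem~\ref{persistcontinue2} then yields
$$d\bigl(\mathscr{H}om_{(-\infty,+\infty)}(\mathscr{F},\mathscr{F}),\; 0\bigr) \;\leq\; \|H\|_{\mathrm{osc}} \;<\; \infty,$$
so in the canonical bar decomposition $\mathscr{H}om_{(-\infty,+\infty)}(\mathscr{F},\mathscr{F}) \simeq \bigoplus_{\alpha} \Bbbk^{r_\alpha}_{(a_\alpha,b_\alpha]}[n_\alpha]$ every bar must have finite length bounded by $\|H\|_{\mathrm{osc}}$; in particular no bar can extend to $\pm\infty$.

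Now suppose for contradiction $\mathscr{F}$ does not have compact support. Since $\pi(\Lambda) \subset M \times \mathbb{R}$ is compact, pick $t_+$ with $\pi(\Lambda) \subset M \times (-\infty, t_+)$. Because $SS^\infty(\mathscr{F}) \subset T^{*,\infty}_{\tau>0}(M\times \mathbb{R})$, the covector $-dt$ never lies in $SS(\mathscr{F})$, so the microlocal Morse lemma applied to $\varphi(x,t) = -t$ forces $\mathscr{F}|_{M \times (t_+, +\infty)}$ to be a locally constant sheaf with some value $F_+ \in \operatorname{Perf}(\Bbbk)$. The hypothesis of non-compact support together with the vanishing near $M \times \{-\infty\}$ then forces $F_+ \neq 0$. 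For any $c \in \mathbb{R}$, the natural morphism $\mathscr{F} \to T_{c,*}\mathscr{F}$ provided by the microlocal cut-off map $\Bbbk_{[0,+\infty)} \to \Bbbk_{[c,+\infty)}$ restricts on the common tail $M \times (\max(t_+, t_+ + c),+\infty)$ to the identity on $F_+$; hence it is non-zero in $Hom(\mathscr{F}, T_{c,*}\mathscr{F})$ for every $c$. Via the identification between $Hom(\mathscr{F}, T_{c,*}\mathscr{F})$ and sections of $\mathscr{H}om(\mathscr{F}_q, \mathscr{F}_r)$ on slabs around $u = c$ established in the proof of Theorem~\ref{computehom}, these classes assemble into non-zero sections of $\mathscr{H}om_{(-\infty,+\infty)}(\mathscr{F},\mathscr{F})$ on $(c-\epsilon, +\infty)$ for arbitrarily large $c$, which is incompatible with a bar decomposition having uniformly bounded finite bars. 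This contradicts the estimate from Theorem~\ref{persistcontinue2}, so $\mathscr{F}$ must have compact support.

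The main obstacle is the last paragraph, specifically the step of extracting an infinite bar cleanly from the persistence of the tail identity. A conceptually cleaner way to handle this is to exhibit a direct summand: restricting $\mathscr{F}$ to the tail and using the $*$-extension yields a morphism $\mathscr{F} \to (F_+)_{M \times [T, +\infty)}$ (any $T > t_+$) which, combined with its natural inverse on the tail, identifies the persistence sheaf $\mathscr{H}om_{(-\infty,+\infty)}((F_+)_{M \times [T,+\infty)}, (F_+)_{M \times [T,+\infty)})$ as a retract; a direct calculation of this model (analogous to the computations in Example~\ref{persistinterval}) produces a bar of the form $(0, +\infty)$ with multiplicity $\dim_{\Bbbk} Hom(F_+, F_+) \cdot \dim_{\Bbbk} H^*(M;\Bbbk) > 0$, contradicting finiteness of the interleaving distance to $0$.
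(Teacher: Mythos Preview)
Your argument has a circularity problem at the step where you invoke Theorem~\ref{persistcontinue2}. That theorem is stated for sheaves $\mathscr{F}, \mathscr{G} \in Sh^b_\Lambda(M \times \mathbb{R})$ \emph{with compact support}, and this hypothesis is genuinely used in its proof (to build the compactly supported cutoff $\widehat\beta_1$ and hence the compactly supported lift $\widehat H$ on $M \times \mathbb{R}^2$). But compact support of $\mathscr{F}$ is exactly the conclusion you are trying to establish, so you cannot appeal to Theorem~\ref{persistcontinue2} at this point. The same issue contaminates several surrounding steps: without compact support the projection $u$ is not proper on $\mathrm{supp}(\mathscr{H}om(\mathscr{F}_q,\mathscr{F}_r))$, so neither the singular support estimate for $u_*$ nor the bar decomposition of $\mathscr{H}om_{(-\infty,+\infty)}(\mathscr{F},\mathscr{F})$ is available as stated in the paper.

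Even granting the interleaving bound, the final paragraph does not actually produce an infinite bar. Nonzero classes in $Hom(\mathscr{F}, T_{c,*}\mathscr{F})$ for each $c$ do not automatically assemble into a single section defined on an unbounded interval; you would need these classes to be restrictions of one another, which you have not checked. The proposed ``cleaner'' fix via a retract onto $(F_+)_{M\times [T,+\infty)}$ asserts a splitting of $\mathscr{F} \to (F_+)_{M\times [T,+\infty)}$ that has no reason to exist.

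The paper avoids all of this by arguing much more directly. Assuming $\mathscr{F}$ is noncompactly supported, both $\mathscr{F}$ and $\mathscr{F}' = \Phi_H^1(\mathscr{F})$ are locally constant with nonzero stalk on a tail $M \times [C,+\infty)$. For $c \gg 0$ the front of $T_c(\Lambda')$ lies entirely above $\pi(\Lambda)$, so $\mathscr{H}om(\mathscr{F}, T_{c,*}\mathscr{F}') \simeq D'\mathscr{F} \otimes T_{c,*}\mathscr{F}'$ has zero stalk near $\pi(\Lambda)$; its singular support at infinity is therefore contained in $T_c(\Lambda') \subset T^{*,\infty}_{\tau>0}(M\times\mathbb{R})$. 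A single application of the microlocal Morse lemma in the $t$-direction then gives $Hom(\mathscr{F}, T_{c,*}\mathscr{F}') \simeq \Gamma(M\times (C,+\infty), \mathscr{H}om(\mathscr{F}, T_{c,*}\mathscr{F}')) \neq 0$, contradicting Lemma~\ref{hori-hom}. No persistence estimate is needed.
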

\begin{proof}
    Suppose $\mathrm{supp}(\mathscr{F})$ is noncompact. Then the fact that $M$ is compact and that $\mathscr{F}$ has zero stalk near $M \times \{-\infty\}$ necessarily mean that for any $C > 0$ sufficiently large, there exists $x \in M$ and $t > C$ such that $\mathscr{F}_{(x,t)} \neq 0$. Let
    $$C > \sup\{t \in \mathbb{R} \mid \exists\,(x, \xi) \in T^*M, \, (x, \xi, t, 1) \in \Lambda\}.$$
    Then $\mathscr{F}$ is locally constant on $M \times [C, +\infty)$ with nonzero stalk.

    Since $\Lambda$ is horizontally displaceable, there is a Hamiltonian flow $\varphi_H^s\,(s \in I)$ such that no Reeb chords are between $\Lambda$ and $\varphi_H^1(\Lambda)$. Let $\Lambda' = \varphi_H^1(\Lambda)$ and (following Theorem \ref{GKS}) $\mathscr{F}' = \Phi_H^1(\mathscr{F})$. $\mathscr{F}'$ is also locally constant on $M \times [C, +\infty)$ for sufficiently large $C > 0$ with nonzero stalk. By Lemma \ref{hori-hom},
    $$Hom(\mathscr{F}, T_{c*}\mathscr{F}') \simeq 0.$$
    Let $c > 0$ be sufficiently large such that the front projection $\pi_{M \times \mathbb{R}}(T_c(\Lambda'))$ is contained in $M \times (C, +\infty)$. Then using the formula
    $$\mathscr{H}om(\mathscr{F}, T_{c*}\mathscr{F}') = D'\mathscr{F} \otimes T_{c*}\mathscr{F}',$$
    near $\pi_{M \times \mathbb{R}}(\Lambda)$ the stalk of $\mathscr{H}om(\mathscr{F}, T_{c*}\mathscr{F}')$ is zero. Hence
    $$SS^\infty(\mathscr{H}om(\mathscr{F}, T_{c*}\mathscr{F}')) \subset \Lambda' \subset T^{*,\infty}_{\tau > 0}(M \times \mathbb{R}).$$
    By microlocal Morse lemma we can conclude that
    $$Hom(\mathscr{F}, T_{c*}\mathscr{F}') \simeq \Gamma(M \times (C, +\infty), \mathscr{H}om(\mathscr{F}, T_{c*}\mathscr{F}')) \not\simeq 0,$$
    which leads to a contradiction.
\end{proof}
\begin{remark}
    Using the framework of Tamarkin categories, the above proposition can be viewed as a version of the Tamarkin separation theorem \cite[Theorem 3.2]{Tamarkin1}.
\end{remark}

    Using the above criterion, we will be able to prove certain Legendrian submanifolds are not horizontally displaceability. Indeed, for any clsoed manifold $M$ with dimension $\dim M \geq 2$, we will construct closed Legendrians in $J^1(M)$ that are not horizontally displaceable. See Appendix \ref{appendix} Theorem \ref{genfamily-example}.

\subsection{Non-squeezing into Loose Legendrians}\label{loosesection}

    In this section we show Theorem \ref{squeeze} that the $C^0$-limit of a smooth family of Legendrian submanifolds is not going to be stablized or loose when there exists some non-trivial sheaf theoretic invariant. Here is the definition and the theorem.

\begin{definition}[Dimitroglou Rizell-Sullivan; Definition \ref{squeezedef}]
    Let $n = \dim M$ and $U \subset T^{*,\infty}_{\tau > 0}(M \times \mathbb{R})$ be an open subset with $H_n(U; \mathbb{Z}/2\mathbb{Z}) \not\cong 0$. A Legendrian submanifold $\Lambda \subset T^{*,\infty}_{\tau > 0}(M \times \mathbb{R})$ can be squeezed into $U$ if there is a Legendrian isotopy $\Lambda_t$ with $\Lambda_0 = \Lambda$ and
    $$\Lambda_1 \subset U, \,\,\, [\Lambda_1] \neq 0 \in H_n(U; \mathbb{Z}/2\mathbb{Z}).$$
\end{definition}

\begin{theorem}[Theorem \ref{squeeze}]\label{squeeze2}
    Let $\Lambda_\text{loose} \subset T^{*,\infty}_{\tau > 0}(\mathbb{R}^{n+1})$ be a stablized/loose Legendrian, and $\Lambda \subset T^{*,\infty}_{\tau > 0}(\mathbb{R}^{n+1})$ be a Legendrian so that there exists $\mathscr{F} \in Sh^b_\Lambda(\mathbb{R}^{n+1})$ whose microstalk has odd Euler characteristic. Then $\Lambda$ cannot be squeezed into a tubular contact neighbourhood of $\Lambda_\text{loose}$.
\end{theorem}

    The idea is to detect the Legendrian $\Lambda$ by a fiber $T^{*,\infty}_{(x_0,t_0)}\mathbb{R}^{n+1}$ as in Example \ref{fiberlegendrian}. First we state a geometric lemma that is needed. This is proved by Dimitroglou Rizell-Sullivan \cite{RizSpersist}. For the concepts including formal Legendrian isotopy, loose Legendrian submanifolds and $h$-principles, the reader may refer to \cite{Loose}.

\begin{lemma}[Dimitroglou Rizell-Sullivan \cite{RizSpersist}*{Lemma 4.3}]\label{looseleg}
    For $n \geq 2$, let $\Lambda_\text{loose} \subset T^{*,\infty}_{\tau > 0}(\mathbb{R}^{n+1})$ be any loose Legendrian submanifold. Then for any small $A > \epsilon > 0$, $\Lambda_\text{loose}$ is isotopic to $\Lambda_\text{loose}'$ that satisfies the following properties:
\begin{enumerate}
    \item there exists $(x_0,t_0) \in \mathbb{R}^{n+1}$ such that there are precisely 2 (transverse) Reeb chords $\gamma_0, \gamma_1$ from $\Lambda'_\text{loose}$ to $T_{(x_0,t_0)}^{*,\infty}\mathbb{R}^{n+1}$ and
    $$l(\gamma_0) - l(\gamma_1) \geq A;$$
    \item there exists a Hamiltonian $H_s\,(s\in I)$ with $\|H\|_\text{osc} \leq \epsilon$ that horizontally displaces $\Lambda'_\text{loose}$ from the cotangent fiber $T_{(x_0,t_0)}^{*,\infty}\mathbb{R}^{n+1}$.
\end{enumerate}
\end{lemma}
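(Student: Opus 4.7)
The plan is to bring $\Lambda_\text{loose}$ into an explicit normal form using the $h$-principle for loose Legendrians (Murphy), then verify conditions (1) and (2) directly. The key is that a loose Legendrian admits arbitrary local modification within any neighbourhood meeting its loose chart, up to Legendrian isotopy, giving us the flexibility to design the geometry near $(x_0, t_0)$.

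First, I would Legendrian-isotope $\Lambda_\text{loose}$ to $\Lambda'_\text{loose}$ containing, in a contact ball $B \subset T^{*,\infty}_{\tau > 0}(\mathbb{R}^{n+1})$, a standard ``two-sheet'' model. Concretely, inside $B$ the front projection of $\Lambda'_\text{loose}$ consists of two nearly-flat graphs $t = f_1(x) \approx 0$ and $t = f_2(x) \approx A + \eta$ over a neighbourhood of some $x_0 \in \mathbb{R}^n$, where $\eta > 0$ is a small slack parameter. The two sheets are connected via a loose (stabilization) zigzag located away from the vertical line $\{x = x_0\}$, and outside a slightly larger ball the Legendrian is untouched. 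Such a normal form is realizable by Murphy's $h$-principle, since the loose chart provides enough flexibility to reshape the neighbourhood into the desired profile while preserving the formal Legendrian isotopy class.

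Second, choose $(x_0,t_0)$ with $t_0$ just above $A+\eta$ so that the line $\{x=x_0\}$ inside $B$ meets $\Lambda'_\text{loose}$ only in the two flat sheets. By genericity, no other components of $\Lambda'_\text{loose}$ project onto $x_0$. This gives precisely two transverse Reeb chords $\gamma_0, \gamma_1$ to $T^{*,\infty}_{(x_0,t_0)}\mathbb{R}^{n+1}$ of lengths $l(\gamma_0) = t_0 - f_1(x_0)$ and $l(\gamma_1) = t_0 - f_2(x_0)$, so that $l(\gamma_0) - l(\gamma_1) = f_2(x_0) - f_1(x_0) \geq A$, verifying (1). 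Third, construct the displacing Hamiltonian as a cut-off contact Hamiltonian $H = \beta(x,t,\xi/\tau) \cdot (c\,\xi_1/\tau)$, where $\beta$ is compactly supported with $\beta \equiv 1$ on a small neighbourhood $U$ of the two intersection points of $\Lambda'_\text{loose}$ with $\{x=x_0\}$, and $c>0$ exceeds the $x_1$-width of the flat sheets inside $U$. The time-$1$ flow translates the two flat sheets inside $U$ off the line $\{x=x_0\}$, eliminating all Reeb chords from $\varphi_H^1(\Lambda'_\text{loose})$ to the fiber. Since $|\xi/\tau| \leq m$ on $U$ for $m$ arbitrarily small (by further flattening the sheets, independently of $A$), the bound $\|H\|_\text{osc} \leq 2cm$ can be made less than $\epsilon$.

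The main obstacle is the simultaneous quantitative control of the three ingredients: the vertical separation $A$ between the sheets, the flatness parameter $m$ governing $|\xi/\tau|$ on those sheets, and the horizontal translation distance $c$. The technical point is that while the $h$-principle provides Legendrian isotopies within the proper formal class, it does not automatically provide quantitative control over the cotangent sizes along the resulting Legendrian; these must be arranged by hand inside the loose chart. The two flat sheets, separated by $A$ in $t$, are set up as graph sections with negligible slope, and only then are they connected through the zigzag, whose large cotangent coordinates are confined to a region disjoint from $\{x=x_0\}$. The Hamiltonian is supported only over the flat region, so the estimate $\|H\|_\text{osc} \leq \epsilon$ is enforced locally, independent of the behaviour of $\Lambda'_\text{loose}$ inside the zigzag.
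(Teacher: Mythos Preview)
Your strategy is close in spirit to the paper's, and the use of Murphy's $h$-principle together with an explicit translation-type Hamiltonian is the right toolkit. However, there is a genuine gap in your verification of condition~(2), and the paper's construction is organized differently precisely to avoid it.

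The paper does not modify $\Lambda_\text{loose}$ inside a ball at all. Instead it picks $(x_0,t_0)$ so that the front of $\Lambda_\text{loose}$ is entirely disjoint from the hyperplane $\{x=x_0\}$, builds a separate loose sphere $\Lambda_{S^n,\text{loose}}$ (formally isotopic to the standard unknot) whose front lies over a neighbourhood of $x_0$ and has exactly the two desired chords, and then forms the cusp connected sum $\Lambda_\text{loose}\#\Lambda_{S^n,\text{loose}}$. The formal isotopy to $\Lambda_\text{loose}$ is then checked directly for the connected sum, so Murphy's theorem applies cleanly. Your proposal leaves unspecified how the two flat sheets in $B$ are attached to the rest of $\Lambda_\text{loose}$ while staying in the same formal class; this is exactly what the connected sum handles.

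The more serious issue is the displacing Hamiltonian. In your model the zigzag carries large cotangent coordinates, and you compensate by cutting off $\beta$ in $(x,t,\xi/\tau)$ near the two chord endpoints. But then on the boundary of $\mathrm{supp}(\beta)$ the flow is only a partial translation, and pieces of the flat sheets lying on the ``incoming'' side in $x_1$ (at $x_1<(x_0)_1$, still with small slope) may be carried onto $\{x=x_0\}$ rather than off it. One cannot repair this just by shrinking $U$, since the sheets must cover $x_0$ to begin with. The paper's key quantitative point is different: it adds \emph{many small} zigzags to $\Lambda_{S^n,\text{loose}}$ so that \emph{every} slope on the entire sphere satisfies $|\xi_i|<\epsilon/2n$. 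Then the Hamiltonian $H=\beta(|\xi|)\,\xi_1$ is cut off only in $\xi$, equals $\xi_1$ on all of $\Lambda_{S^n,\text{loose}}$, and hence rigidly translates the whole sphere off $\{x=x_0\}$; the original $\Lambda_\text{loose}$, being far from $x_0$, stays away under the bounded drift. Your large-slope zigzag forces a spatial cut-off and breaks this rigidity; the fix is to flatten the connecting zigzag as well, not just the two sheets.
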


\begin{figure}
  \centering
  \includegraphics[width=0.45\textwidth]{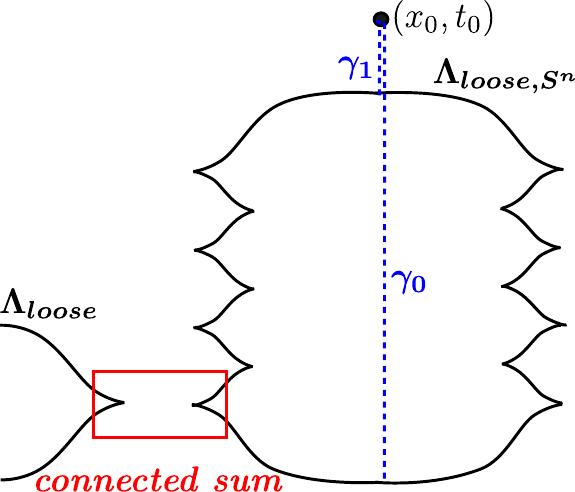}\\
  \caption{On the left there is the loose Legendrian $\Lambda_\text{loose}$ and on the right there is a loose Legendrian $\Lambda_{S^n,\text{loose}}$ formally isotopic to the unknotted sphere (the front projection should be spinning around its symmetry axis). In the red region we perform the connected sum construction.}\label{looselegendrian}
\end{figure}

\begin{proof}[Proof of Theorem \ref{squeeze2}]
    First assume that $n \geq 2$. Suppose $\Lambda$ can be squeezed into a contact tubular  neighbourhood $U_\text{loose}$ of $\Lambda_\text{loose}$. By Lemma \ref{looseleg}, we can apply a contact isotopy so that the contact tubular neighbourhood $U_\text{loose}$ is mapped to a contact tubular neighbourhood $U_\text{loose}'$ of $\Lambda_\text{loose}'$. Denote by $\Lambda'$ the image of the original Legendrian submanifold in $U_\text{loose}'$. By shrinking the contact tubular neighbourhood $U_\text{loose}'$ we may assume that for the projection $\pi_{\mathbb{R}^n} \circ \pi_\text{front}: U_\text{loose}' \rightarrow \mathbb{R}^n$, the height of each connected component of $U_\text{loose}'$ in the fiber of $\pi_{\mathbb{R}^n} \circ \pi_\text{front}$ is less than $\epsilon'$ where $4\epsilon' < A - \epsilon$.

    Lemma \ref{looseleg} ensures that there exists $(x_0,t_0) \in \mathbb{R}^{n+1}$ such that there are 2 (transverse) Reeb chords from $\Lambda_\text{loose}'$ to $T_{(x_0,t_0)}^{*,\infty}\mathbb{R}^{n+1}$, starting from $(x_0, t_1)$ and $(x_0, t_2)$. For $\Lambda' \subset U_\text{loose}'$, since the degree $[\Lambda'] \neq 0 \in H_n(\Lambda_\text{loose}'; \mathbb{Z}/2\mathbb{Z})$, the preimage of $(x_0, t_1)$ and $(x_0, t_2)$ under the projection $U_\text{loose}' \rightarrow \Lambda_\text{loose}'$ contains an odd number of points $p_{1,1}, \dots, p_{1,2k+1}$ and $p_{2,1}, \dots, p_{2,2k+1}$. We can assume that the fiber of the contact tubular neighbourhood $U_\text{loose}' \rightarrow \Lambda_\text{loose}'$ are contained in the fibers of the standard projection $J^1(\mathbb{R}^n) \to \mathbb{R}^n$, so $p_{1,1}, \dots, p_{1,2k+1}$ and $p_{2,1}, \dots, p_{2,2k+1}$ are also the preimage of $x_0$ under the projection $U_\text{loose}' \rightarrow \mathbb{R}^n$. We can also assume that
    $$\min_{1\leq i,j\leq 2k+1}|u(p_{1,i}) - u(p_{2,j})| \geq A - 2\epsilon'.$$

    For $\mathscr{F}' \in Sh_{\Lambda'}(\mathbb{R}^{n+1})$ that is the image of $\mathscr{F} \in Sh_{\Lambda}(\mathbb{R}^{n+1})$ under the contact isotopy, we now calculate
    $$\mathscr{H}om_{(-\infty,+\infty)}(\Bbbk_{(x_0,t_0)}, \mathscr{F}').$$
    By Lemma \ref{reebchord-hom2}, $u(p_{1,1}), \dots, u(p_{1,2k+1})$ and $u(p_{2,1}), \dots, u(p_{2,2k+1})$ correspond to all the starting points and ending points of the bars. In addition, for each point the number of bars $\Bbbk_{(a,b]}$ in the sheaf is at least the rank of the cohomology of the microstalk of $\mathscr{F}'$, which is an odd number since the Euler characteristic of the microstalk is odd. We argue that there must be a bar starting from $u(p_{1,i})$ and ending at $u(p_{2,j})$. Otherwise all bars start at some $u(p_{1,i})$ will end at some $u(p_{1,j})$ for $i \neq j$. However, there are odd number of points $u(p_{1,1}), \dots, u(p_{1,2k+1})$ and there are odd numbers of bars starting from or ending at each point since the total rank of the microstalk is odd. Thus there are odd numbers of endpoints of bars in $u(p_{1,1}), \dots, u(p_{1,2k+1})$, and it is impossible for all bars start at some $u(p_{1,i})$ to end at some $u(p_{1,j})$ for $i \neq j$. Now that we know there is a bar starting from $u(p_{1,i})$ and ending at $u(p_{2,j})$, it will have length at least $A - 2\epsilon'$. 

    Consider the Hamiltonian $H_s\,(s\in I)$ with $\|H\|_\text{osc} \leq \epsilon+\epsilon' < A - 2\epsilon'$ and horizontally displaces $\Lambda_\text{loose}'$ from the cotangent fiber $T^{*,\infty}_{(x_0,t_0)}\mathbb{R}^{n+1}$ as in Lemma \ref{looseleg}. For a sufficiently small neighbourhood $U_\text{loose}'$ of $\Lambda_\text{loose}'$, the Hamiltonian $H_s\,(s\in I)$ will also horizontally displace $U_\text{loose}'$. By Theorem \ref{persistcontinue2}, we have
   $$\bar{d}(\mathscr{H}om_{(-\infty,+\infty)}(\Bbbk_{(x_0,t_0)}, \mathscr{F}'), \mathscr{H}om_{(-\infty,+\infty)}(\Bbbk_{(x_0,t_0)}, \Phi_H(\mathscr{F}'))) \leq \epsilon + \epsilon' < A - 2\epsilon'.$$
    Then, consider the bar in $\mathscr{H}om_{(-\infty,+\infty)}(\Bbbk_{(x_0,t_0)}, \mathscr{F}')$ with length at least $A - 2\epsilon'$. This bar will persist in $\mathscr{H}om_{(-\infty,+\infty)}(\Bbbk_{(x_0,t_0)}, \Phi_H(\mathscr{F}')))$. This contradicts with the assumption that $H_s\,(s\in I)$ horizontally displaces $U_\text{loose}'$.

    Finally when $n = 1$, suppose $\Lambda$ is contained in a contact tubular neighbourhood of $\Lambda_\text{loose}$. We apply the spinning construction \cite[Section 4.4]{EESNoniso} (Figure \ref{spinning}) to a stablized Legendrian knot, as explained in \cite[Section 4]{RizSpersist}. Namely, consider a real line $t = t_0$ that is disjoint from the front projection $\Lambda_\text{loose}$ and $\Lambda$ and spin around the front along the line $x = x_0$. The standard zigzag thus gives a loose chart for the new Legendrian $\Lambda_\text{loose,spin}$ and $\Lambda_\text{spin}$ in $T^{*,\infty}_{\tau > 0}\mathbb{R}^3$. It is clear from the front projection that, if there is a sheaf with singular support on a knot, then there is also a sheaf with singular support on its spinning. In fact, we consider $\mathbb{R}^3 \setminus \{(x, y, t) \mid x = x_0, y = 0\} \cong \{(x, t) \mid x < x_0\} \times S^1$ and the projection
    $$\pi: \mathbb{R}^3 \setminus \{(x, y, t) \mid x = x_0, y = 0\} \cong \{(x, t) \mid x < x_0\} \times S^1 \rightarrow \{(x, t) \mid x < x_0\} \cong \mathbb{R}^2.$$
    Now for $\mathscr{F}$ on $\{(x, t) \mid x < x_0\}$ with singular support $\Lambda$, take the sheaf $\pi^{-1}\mathscr{F}$ on $\mathbb{R}^3 \setminus \{(x, y, t) \mid x = x_0, y = 0\}$, whose singular support will be in $\Lambda_\text{spin}$. Note that $\mathrm{supp}(\mathscr{F})$ is compact, so $\pi^{-1}\mathscr{F}$ has zero stalk near the line $\{(x, y, t) \mid x = x_0, y = 0\}$ and we can easily extend it to a sheaf on $\mathbb{R}^3$. Then applying the argument above will complete the proof.
\end{proof}

\begin{remark}
    We explain why we need the assumption that the Euler characteristic of the microstalk is odd. In fact, this result should be false if we drop the odd Euler characteristic assumption. The issue is that, for each real number that contains $r$ endpoints of bars, these $r$ bars may go to different endpoints. Thus, there exist persistence modules with odd number of endpoints such that the rank changes by 2 at each endpoint, which makes it impossible to exclude the case that all bars starting at $u(p_{1,i})$ end at some nearby points $u(p_{1,j})$. For example, the sheaf $\mathscr{F} = \Bbbk_{(-1,0]} \oplus \Bbbk_{(0,1]} \oplus \Bbbk_{(-1,1]}$ satisfies this condition, and one can similarly find sheaves with this property on a three-copy Reeb push-off of a loose Legendrian submanifold by taking direct sums of three copies of the doubling construction, following \cite[Part 11]{Guisurvey}. This gives an example of a Legendrian which admits sheaves whose microlocal stalks have even Euler characteristics that can be squeezed into a tubular neighbourhood of a loose Legendrian.

    When the Legendrian submanifold is connected, it is possible that the odd dimensional stalk condition is no longer necessary. However, the proof presented here will not work because of the reason we just explained above (that we cannot exclude the case where different bars from the same starting point end at different ednpoints). It is an interesting question to try to generalize the result in that setting.
\end{remark}

\begin{figure}
  \centering
  \includegraphics[width=0.45\textwidth]{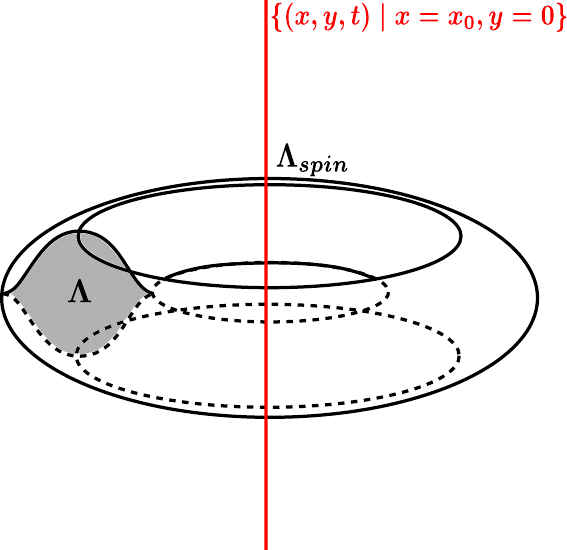}\\
  \caption{The front spinning of a standard unknot along the red vertical line $x = x_0$. We consider the Legendrian front projection in the plane $\{(x, t) \mid x < x_0\} \subset \mathbb{R}^2$ and obtain a Legendrian front projection in $\mathbb{R}^3 \setminus \{(x, y, t) \mid x = x_0, y = 0\} \subset \mathbb{R}^3$.}\label{spinning}
\end{figure}

\appendix
\section{Sheaves and generating families}\label{appendix}

    In this appendix, we explain the relationship between microlocal rank 1 sheaves with compact support and generating families linear at infinity. For the definition of generating families linear at infinity, we follow \cite{TrayGen,Genfamily}. The main results are as follows:

\begin{proposition}\label{genfamily-sheaf}
    A generating family $f: M \times \mathbb{R}^N \to \mathbb{R}$ for a closed Legendrian $\Lambda \subset J^1(M)$ with generic front projection induces a microlocal rank 1 sheaf on $M \times \mathbb{R}$ with singular support in $\Lambda$ with $0$ stalk at $M \times \{-\infty\}$. Moreover, if $f: M \times \mathbb{R}^N \to \mathbb{R}$ is linear at infinity, then the sheaf has compact support.
\end{proposition}

\begin{proposition}\label{genfamily-example}
    For any closed manifold $M$ with $\dim M \geq 2$, there exists a Legendrian $\Lambda \subset J^1(M)$ that is not horizontally displaceable, does not admit a generating family linear at infinity, but admits a microlocal rank 1 sheaf on $M \times \mathbb{R}$ with compact support for some coefficient ring $\Bbbk$.
\end{proposition}

    Here, we say that the front projection of $\Lambda$ is generic if for a generic point $p \in \Lambda$, there exists a neighboruhood $\Omega \subset \Lambda$ such that the front projection $\pi|_\Omega: \Omega \to M \times \mathbb{R}$ is a smooth embedding. The first proposition is standard, which goes back to Viterbo \cite[Section 9.1.2]{ViterboIntroSheaf}, who proved a similar proposition for generating families quadratic at infinity.

\begin{proof}[Proof of Proposition \ref{genfamily-sheaf}]
    We write $j_{f \geq c} : \{(x, u, t) \mid f(x, u) \geq c\} \hookrightarrow M \times \mathbb{R}^N \times \mathbb{R}$ and $\pi_{M \times \mathbb{R}}: M \times \mathbb{R}^N \times \mathbb{R} \to M \times \mathbb{R}$. Then we define the sheaf to be
    $$\mathscr{F} = \pi_{M \times \mathbb{R}!} j_{f \geq c !} \Bbbk_{f \geq c} \in Sh^b(M \times \mathbb{R}).$$
    Since $SS^\infty(j_{f \geq c !} \Bbbk_{f \geq c}) = \Lambda_{j^1f} = \{(x, u, f(x, u); \xi, \nu, 1) \mid (\xi, \nu) = df(x, u) \}$, it follows from Proposition \ref{sspushforward} that 
    $$SS^\infty( \pi_{M \times \mathbb{R}!} j_{f \geq c !} \Bbbk_{f \geq c}) \subset \Lambda.$$
    Moreover, consider a generic point in the front projection where $f(x_0, u_0) = t_0$. Since $\Lambda_{j^1f} \pitchfork J^1(M) \times\mathbb{R}^N \times  0 \times \mathbb{R}$, by implicit function theorem, we may find $u = u(x)$ such that $Df(x, u(x)) = 0$ on a small ball $U_{(x_0, u_0, t_0)}$ around $(x_0, u_0, t_0)$. Let $\varphi(x, t) = t - f(x, u(x))$. By Morse lemma,
    $$\Gamma_{\varphi \geq 0}(\pi_{M \times \mathbb{R} !} j_{f \geq c !} \Bbbk_{f \geq c} )_{(x_0, t_0)} = \Bbbk[-\mathrm{ind}(D^2f_{(x_0, u_0)})].$$

    Finally, it suffices to show that when $f$ is linear at infinity, then $\mathscr{F}$ has compact support. Since $f$ is linear at infinity, we know that for any $x_0 \in M$, there exists $t_0$ sufficiently large, such that $f$ is linear on $f^{-1}([t_0, +\infty))$. Thus when $t_0' > t_0$,
    $$(\pi_{M \times \mathbb{R}!} j_{f \geq c !} \Bbbk_{f \geq c})_{(x_0, t_0')} = \Gamma_c(\mathbb{R}^N, \Bbbk_{\{f(x_0, u) \geq t_0'\}}) = 0.$$
    Therefore, since $SS^\infty(\mathscr{F}) \subset \Lambda$ and $\pi(\Lambda) \subset M \times \mathbb{R}$ is compact, we know that $\mathrm{supp}(\mathscr{F})$ is compact.
\end{proof}

\begin{proof}[Proof of Proposition \ref{genfamily-example}]
    We first construct the Legendrian in $J^1(S^n) \,(n \geq 2)$. Let $S^{n-2} \subset S^n$ be the standard dividing sphere of $S^{n-1} \subset S^n$. Then $\pi_1(S^n \backslash S^{n-2}) = \mathbb{Z}$. Consider a tubular neighbourhood $U(S^{n-2}) = S^{n-2} \times D^2 \subset S^n$ with coordinate $(u, v) \in S^{n-2} \times D^2$. Let $\rho: [0, 1] \to [0, 1]$ be a smooth function such that 
    $$\rho|_{[0, \epsilon)}(x) \equiv x, \;\; \rho|_{(1-\epsilon, 1]}(x) \equiv 1.$$
    We define a Legendrian in $J^1(U(S^{n-2}))$ as
    $$\Lambda_0 = \{(u, v, 0, \eta, t) \mid t^2 = \rho(v^2), t \eta = v \rho'(v^2) \} \subset J^1(U(S^{n-2})),$$
    whose front projection is an $S^{n-2}$-family of cone singularities. We then glue it with 
    $$\Lambda_1 = \{(x, 0, \pm 1) \mid x \in S^n \backslash U(S^{n-2})\} \subset J^1(S^n \backslash U(S^{n-2}))$$
    and get a Legendrian $\Lambda \subset J^1(S^n)$. See Figure \ref{Nondisplace-Example}.

\begin{figure}
  \centering
  \includegraphics[width=0.35\textwidth]{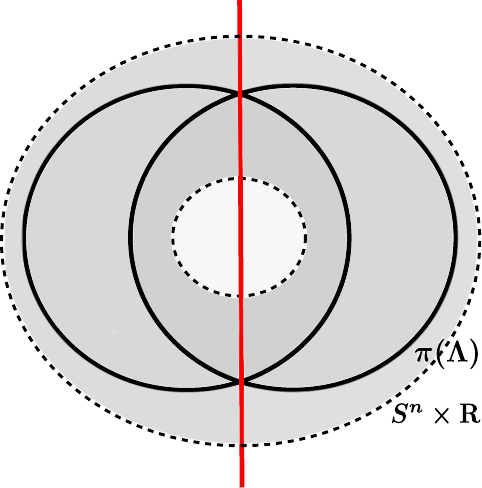}\\
  \caption{The front projection of the Legendrian $\pi(\Lambda) \subset S^n \times \mathbb{R}$ along the slice $S^{n-1} \times \mathbb{R}$. The front projection  $\pi(\Lambda)$ is obtained by spinning the front projection in the picture along the red vertical line.}\label{Nondisplace-Example}
\end{figure}

    Consider microlocal rank 1 sheaves $\mathscr{F} \in Sh^b_\Lambda(S^n \times \mathbb{R}; \Bbbk)$. We will prove that 
    \begin{enumerate}
    \item for any $\Bbbk$, there exists $\mathscr{F} \in Sh^b_\Lambda(S^n \times \mathbb{R}; \Bbbk)$ that is not compactly supported;
    \item for $\Bbbk \neq \mathbb{Z}/2\mathbb{Z}$, there exists $\mathscr{F} \in Sh^b_\Lambda(S^n \times \mathbb{R}; \Bbbk)$ that is compactly supported;
    \item for $\Bbbk = \mathbb{Z}/2\mathbb{Z}$, all $\mathscr{F} \in Sh^b_\Lambda(S^n \times \mathbb{R}; \Bbbk)$ are non-compactly supported. 
    \end{enumerate}
    Statement (1) implies that $\Lambda$ is not horizontally displaceable by Proposition \ref{horizontal-displace}, and statement (3) implies that $\Lambda$ does not admit a generating family linear at infinity by Proposition \ref{genfamily-sheaf}. This will completes the proof of the proposition.

    Decompose $S^2 \times \mathbb{R} \backslash \pi(\Lambda)$ as the union of the stratum $S_-$ at $-\infty$, the stratum $S_+$ at $\infty$ and the stratum $S_0$ in the middle which is diffeomorphic to $S^n \backslash S^{n-2} \times (0, 1)$. Assume that the stalk of $\mathscr{F}$ on $S_-$ is $0$ and the stalk on $S_+$ is $F_+$. Then $\mathscr{F}|_{S_0}$ is a rank 1 local system. 

    First, we prove statement (1). For any ring $\Bbbk$, suppose the local system has monodromy $1$. Then at a cone singularity $(x, 0) \in S^n \times \mathbb{R}$, we know that
    $$\Gamma_{t\geq 0}(\mathscr{F})_{(x, 0)} = \mathrm{Cone}(F_+ \to \Gamma(S_0, \mathscr{F})) \simeq 0.$$
    Since $\Gamma(S_0, \mathscr{F}) = H^*(S^1) \neq 0$, we know that $F_+ \neq 0$. Therefore, by Proposition \ref{horizontal-displace}, $\Lambda$ is not horizontally displaceable.

    Next, we prove statement (2). For $\Bbbk \neq \mathbb{Z}/2\mathbb{Z}$, suppose the local system has monodromy $c \neq 1$. Then at a cone singularity $(x, 0) \in S^n \times \mathbb{R}$, we know that
    $$\Gamma_{t\geq 0}(\mathscr{F})_{(x, 0)} = \mathrm{Cone}(F_+ \to \Gamma(S_0, \mathscr{F})) \simeq 0.$$
    Since $\Gamma(S_0, \mathscr{F}) = 0$, we know that $F_+ = 0$. Hence there exists a sheaf $\mathscr{F} \in Sh^b_\Lambda(S^n \times \mathbb{R})$ with compact support when $\Bbbk \neq \mathbb{Z}/2\mathbb{Z}$.

    Finally, we prove statement (3). For $\Bbbk = \mathbb{Z}/2\mathbb{Z}$, since any rank 1 local system on $S_0$ over $\Bbbk$ has monodromy 1, we can conclude that $F_+ \neq 0$. Hence there are no microlocal rank 1 sheaves $\mathscr{F} \in Sh^b_\Lambda(S^n \times \mathbb{R})$ with compact support when $\Bbbk = \mathbb{Z}/2\mathbb{Z}$.

    Finally, for a general closed manifold $M$. We can take connected sum of $J^1(M)$ and $J^1(S^n)$, and take the connected sum of $\{(x, 0, \pm 1) \mid x \in M)\} \subset J^1(M)$ with $\Lambda \subset J^1(S^n)$. The sheaf computation is local, and hence the proof goes through.
\end{proof}

\bibliography{legendriansheaf}

@article{Akaho,
  title={Symplectic displacement energy for exact {L}agrangian immersions},
  author={Manabu Akaho},
  journal={arXiv preprint arXiv:1505.06560},
  year={2015}
}

@article{Arnold,
	year = 1986,
	publisher = {{IOP} Publishing},
	volume = {41},
	number = {6},
	pages = {1--21},
	author = {V I Arnol'd},
	title = {First steps in symplectic topology},
	journal = {Russian Mathematical Surveys},
}

@article{AsanoIke,
  title={Persistence-like distance on {T}amarkin's category and symplectic displacement energy},
  author={Asano, Tomohiro and Ike, Yuichi},
  journal={Journal of Symplectic Geometry},
  volume={18},
  number={3},
  pages={613--649},
  year={2020},
  publisher={International Press of Boston}
}

@inproceedings{AsanoIkeimmersion,
  title={Sheaf quantization and intersection of rational {L}agrangian immersions},
  author={Asano, Tomohiro and Ike, Yuichi},
  booktitle={Annales de l'{I}nstitut {F}ourier},
  pages={1--55}
}

@article{relativeCY,
    AUTHOR = {Brav, Christopher and Dyckerhoff, Tobias},
     TITLE = {Relative {C}alabi-{Y}au structures},
   JOURNAL = {Compos. Math.},
  FJOURNAL = {Compositio Mathematica},
    VOLUME = {155},
      YEAR = {2019},
    NUMBER = {2},
     PAGES = {372--412},
      ISSN = {0010-437X},
       DOI = {10.1112/s0010437x19007024},
       URL = {https://doi.org/10.1112/s0010437x19007024},
}

@article{CasalsMurphydga,
title = "Differential algebra of cubic planar graphs",
journal = "Advances in Mathematics",
volume = "338",
pages = "401 - 446",
year = "2018",
author = "Roger Casals and Emmy Murphy",
}

@article{CasalsZas,
  title={Legendrian weaves: {N}--graph calculus, flag moduli and applications},
  author={Casals, Roger and Zaslow, Eric},
  journal={Geometry \& Topology},
  volume={26},
  number={8},
  pages={3589--3745},
  year={2023},
  publisher={Mathematical Sciences Publishers}
}

@article{RepSheaf,
  title={Representations, sheaves and {L}egendrian {$(2, m)$} torus links},
  author={Chantraine, Baptiste and Ng, Lenhard and Sivek, Steven},
  journal={Journal of the London Mathematical Society},
  volume={100},
  number={1},
  pages={41--82},
  year={2019},
  publisher={Wiley Online Library}
}

@inproceedings{Proximitypersist,
  title={Proximity of persistence modules and their diagrams},
  author={Chazal, Fr{\'e}d{\'e}ric and Cohen-Steiner, David and Glisse, Marc and Guibas, Leonidas J and Oudot, Steve Y},
  booktitle={Proceedings of the twenty-fifth annual symposium on Computational geometry},
  pages={237--246},
  year={2009}
}

@article{Stablepersist,
    AUTHOR = {Chazal, Fr\'{e}d\'{e}ric and de Silva, Vin and Glisse, Marc and Oudot,
              Steve},
     TITLE = {The structure and stability of persistence modules},
    SERIES = {SpringerBriefs in Mathematics},
 PUBLISHER = {Springer, [Cham]},
      YEAR = {2016},
     PAGES = {x+120},
      ISBN = {978-3-319-42543-6; 978-3-319-42545-0},
       DOI = {10.1007/978-3-319-42545-0},
       URL = {https://doi.org/10.1007/978-3-319-42545-0},
}

@article{Chekanov,
author = "Chekanov, Yu. V.",
journal = "Duke Mathematical Journal",
number = "1",
pages = "213--226",
title = "Lagrangian intersections, symplectic energy, and areas of holomorphic curves",
volume = "95",
year = "1998"
}

@article{Chiu,
  title={Non-squeezing property of contact balls},
  author={Chiu, Sheng-Fu},
  journal={Duke Mathematical Journal},
  year={2017},
  volume={166},
  pages={605-655}
}

@article{Cieliechord,
author = {Cieliebak, Kai},
journal = {Journal of the European Mathematical Society},
number = {2},
pages = {115-142},
title = {Handle attaching in symplectic homology and the Chord Conjecture},
volume = {004},
year = {2002},
}

@book{CE,
  title={From {S}tein to {W}einstein and back: symplectic geometry of affine complex manifolds},
  author={Cieliebak, Kai and Eliashberg, Yakov},
  volume={59},
  year={2012},
  publisher={American Mathematical Soc.}
}

@article{RizGolestimating,
  title={Estimating the number of {R}eeb chords using a linear representation of the characteristic algebra},
  author={Dimitroglou Rizell, Georgios and Golovko, Roman},
  journal={Algebraic \& Geometric Topology},
  volume={15},
  number={5},
  pages={2885--2918},
  year={2015},
  publisher={Mathematical Sciences Publishers}
}

@article{RizSpersist,
  title={The persistence of the {C}hekanov--{E}liashberg algebra},
  author={Dimitroglou Rizell, Georgios and Sullivan, Michael G},
  journal={Selecta Mathematica},
  volume={26},
  number={5},
  pages={1--32},
  year={2020},
  publisher={Springer}
}

@article{RizSenergy,
    AUTHOR = {Dimitroglou Rizell, Georgios and Sullivan, Michael G.},
     TITLE = {An energy-capacity inequality for {L}egendrian submanifolds},
   JOURNAL = {J. Topol. Anal.},
  FJOURNAL = {Journal of Topology and Analysis},
    VOLUME = {12},
      YEAR = {2020},
    NUMBER = {3},
     PAGES = {547--623},
      ISSN = {1793-5253},
       DOI = {10.1142/S1793525319500572},
       URL = {https://doi.org/10.1142/S1793525319500572},
}

@article{Fewdoublepoints,
  title={Constructing exact {L}agrangian immersions with few double points},
  author={Tobias Ekholm and Yakov Eliashberg and Emmy Murphy and Ivan Smith},
  journal={Geometric and Functional Analysis},
  year={2013},
  volume={23},
  pages={1772-1803}
}

@article{EESduality,
  title={A duality exact sequence for {L}egendrian contact homology},
  author={Ekholm, Tobias and Etnyre, John B and Sabloff, Joshua M},
  journal={Duke mathematical journal},
  volume={150},
  number={1},
  pages={1--75},
  year={2009},
  publisher={Duke University Press}
}

@article{EESNoniso,
author = "Ekholm, Tobias and Etnyre, John and Sullivan, Michael",
journal = "Journal of Differential Geometry",
number = "1",
pages = "85--128",
title = "Non-isotopic {L}egendrian submanifolds in {$\mathbb{R}^{2n+1}$}",
volume = "71",
year = "2005"
}

@article{EESR2n+1,
author = "Ekholm, Tobias and Etnyre, John and Sullivan, Michael",
journal = "Journal of Differential Geometry",
number = "2",
pages = "177--305",
title = "The contact homology of {L}egendrian submanifolds in {$\mathbb{R}^{2n+1}$}",
volume = "71",
year = "2005"
}

@article{EESorientation,
  title={Orientations in {L}egendrian contact homology and exact {L}agrangian immersions},
  author={Ekholm, Tobias and Etnyre, John and Sullivan, Michael},
  journal={International Journal of Mathematics},
  volume={16},
  number={05},
  pages={453--532},
  year={2005},
  publisher={World Scientific}
}

@article{AugSheafknot,
  title={Simple sheaves for knot conormals},
  author={Gao, Honghao},
  journal={Journal of Symplectic Geometry},
  volume={18},
  number={4},
  pages={1027--1070},
  year={2020},
  publisher={International Press of Boston}
}

@book{Geiges,
  title={An introduction to contact topology},
  author={Geiges, Hansj{\"o}rg},
  volume={109},
  year={2008},
  publisher={Cambridge University Press}
}

@article{Gui,
  title={Quantization of conic {L}agrangian submanifolds of cotangent bundles},
  author={Guillermou, St{\'e}phane},
  journal={arXiv preprint arXiv:1212.5818},
  year={2012}
}

@article{GuiGEthm,
  title={The {G}romov-{E}liashberg theorem by microlocal sheaf theory},
  author={Guillermou, St{\'e}phane},
  journal={arXiv preprint arXiv:1311.0187},
  year={2013}
}

@article{Guithree,
  title={The three cusps conjecture},
  author={Guillermou, St{\'e}phane},
  journal={arXiv preprint arXiv:1603.07876},
  year={2016}
}

@article{Guisurvey,
    AUTHOR = {Guillermou, St\'{e}phane},
     TITLE = {Sheaves and symplectic geometry of cotangent bundles},
   JOURNAL = {Ast\'{e}risque},
  FJOURNAL = {Ast\'{e}risque},
    NUMBER = {440},
      YEAR = {2023},
     PAGES = {x+274},
      ISSN = {0303-1179},
      ISBN = {978-2-85629-972-2},
       DOI = {10.24033/ast.1199},
       URL = {https://doi.org/10.24033/ast.1199},
}

@article{GKS,
  title={Sheaf quantization of {H}amiltonian isotopies and applications to nondisplaceability problems},
  author={Guillermou, St{\'e}phane and Kashiwara, Masaki and Schapira, Pierre},
  journal={Duke Mathematical Journal},
  volume={161},
  number={2},
  pages={201--245},
  year={2012},
  publisher={Duke University Press}
}

@incollection{GS,
  title={Microlocal theory of sheaves and {T}amarkin's non displaceability theorem},
  author={Guillermou, St{\'e}phane and Schapira, Pierre},
  booktitle={Homological mirror symmetry and tropical geometry},
  pages={43--85},
  year={2014},
  publisher={Springer}
}

@article{Ike,
  title={Compact exact {L}agrangian intersections in cotangent bundles via sheaf quantization},
  author={Ike, Yuichi},
  journal={Publications of the Research Institute for Mathematical Sciences},
  volume={55},
  number={4},
  pages={737--778},
  year={2019}
}

@article{JinTreu,
    AUTHOR = {Jin, Xin and Treumann, David},
     TITLE = {Brane structures in microlocal sheaf theory},
   JOURNAL = {J. Topol.},
  FJOURNAL = {Journal of Topology},
    VOLUME = {17},
      YEAR = {2024},
    NUMBER = {1},
     PAGES = {Paper No. e12325, 68},
      ISSN = {1753-8416},
       DOI = {10.1112/topo.12325},
       URL = {https://doi.org/10.1112/topo.12325},
}

@book{KS,
  title={Sheaves on Manifolds},
  author={Kashiwara, Masaki and Schapira, Pierre},
  volume={292},
  year={2013},
  publisher={Springer Science \& Business Media}
}

@article{KSpersist,
  title={Persistent homology and microlocal sheaf theory},
  author={Masaki Kashiwara and Pierre Schapira},
  journal={Journal of Applied and Computational Topology},
  year={2018},
  volume={2},
  pages={83-113}
}

@article{Orbitcat,
    AUTHOR = {Keller, Bernhard},
     TITLE = {On triangulated orbit categories},
   JOURNAL = {Doc. Math.},
  FJOURNAL = {Documenta Mathematica},
    VOLUME = {10},
      YEAR = {2005},
     PAGES = {551--581},
      ISSN = {1431-0635},
}

@article{Kuo,
  title={Wrapped sheaves},
  author={Kuo, Christopher},
  journal={Advances in Mathematics},
  volume={415},
  pages={108882},
  year={2023},
  publisher={Elsevier}
}

@article{LiuCuplength,
title = "Cup-length estimate for {L}agrangian intersections",
journal = "Journal of Differential Equations",
volume = "209",
number = "1",
pages = "57 - 76",
year = "2005",
author = "Chun-Gen Liu",
}

@article{Mohnkechord,
  title={Holomorphic disks and the chord conjecture},
  author={Mohnke, Klaus},
  journal={Annals of Mathematics},
  pages={219--222},
  year={2001},
  publisher={JSTOR}
}

@article{Loose,
  title={Loose {L}egendrian embeddings in high dimensional contact manifolds},
  author={Emmy Murphy},
  journal={arXiv preprint arxiv:1201.2245},
  year={2012}
}

@article{Arborealloose,
  title={Arboreal singularities and loose {L}egendrians {I}},
  author={Emmy Murphy},
  journal={arXiv preprint arXiv:1902.05056},
  year={2019}
}

@article{Nad,
  title={Microlocal branes are constructible sheaves},
  author={Nadler, David},
  journal={Selecta Mathematica},
  volume={15},
  number={4},
  pages={563--619},
  year={2009},
  publisher={Springer}
}

@article{NadWrapped,
  title={Wrapped microlocal sheaves on pairs of pants},
  author={Nadler, David},
  journal={arXiv preprint arXiv:1604.00114},
  year={2016}
}

@article{NadShen,
  title={Sheaf quantization in {W}einstein symplectic manifolds},
  author={Nadler, David and Shende, Vivek},
  journal={arXiv preprint arXiv:2007.10154},
  year={2020}
}

@article{NadZas,
  title={Constructible sheaves and the {F}ukaya category},
  author={Nadler, David and Zaslow, Eric},
  journal={Journal of the American Mathematical Society},
  volume={22},
  number={1},
  pages={233--286},
  year={2009}
}

@article{AugSheaf,
  title={Augmentations are sheaves},
  author={Ng, Lenhard and Rutherford, Dan and Shende, Vivek and Sivek, Steven and Zaslow, Eric},
  journal={Geometry \& Topology},
  volume={24},
  number={5},
  pages={2149--2286},
  year={2020},
  publisher={Mathematical Sciences Publishers}
}

@article{PolShe,
  title={Autonomous {H}amiltonian flows, {H}ofer's geometry and persistence modules},
  author={Polterovich, Leonid and Shelukhin, Egor},
  journal={Selecta Mathematica},
  volume={22},
  number={1},
  pages={227--296},
  year={2016},
  publisher={Springer}
}

@article{Rittertqft,
  title={Topological quantum field theory structure on symplectic cohomology},
  author={Alexander Ritter},
  journal={Journal of Topology},
  year={2013},
  volume={6},
  pages={391-489}
}

@article{MicrolocalInfty,
  title={A lemma for microlocal sheaf theory in the $\infty $-categorical setting},
  author={Robalo, Marco and Schapira, Pierre},
  journal={Publications of the Research Institute for Mathematical Sciences},
  volume={54},
  number={2},
  pages={379--391},
  year={2018}
}

@article{AugSheafsurface,
  title={Sheaves via augmentations of {L}egendrian surfaces},
  author={Rutherford, Dan and Sullivan, Michael},
  journal={Journal of Homotopy and Related Structures},
  volume={16},
  pages={703--752},
  year={2021},
  publisher={Springer}
}

@article{Sabduality,
  title={Duality for {L}egendrian contact homology},
  author={Sabloff, Joshua M},
  journal={Geometry \& Topology},
  volume={10},
  number={4},
  pages={2351--2381},
  year={2006},
  publisher={Mathematical Sciences Publishers}
}

@article{Genfamily,
author = "Sabloff, Joshua M and Traynor, Lisa",
journal = "Algebraic \& Geometric Topology",
number = "5",
pages = "2733--2797",
title = "Obstructions to {L}agrangian cobordisms between {L}egendrians via generating families",
volume = "13",
year = "2013"
}

@article{Shendeconormal,
    AUTHOR = {Shende, Vivek},
     TITLE = {The conormal torus is a complete knot invariant},
   JOURNAL = {Forum Math. Pi},
  FJOURNAL = {Forum of Mathematics. Pi},
    VOLUME = {7},
      YEAR = {2019},
     PAGES = {e6, 16},
   MRCLASS = {57M25 (55N30 57M27)},
       DOI = {10.1017/fmp.2019.1},
       URL = {https://doi.org/10.1017/fmp.2019.1},
}

@article{STWZ,
  title={Cluster varieties from {L}egendrian knots},
  author={Shende, Vivek and Treumann, David and Williams, Harold and Zaslow, Eric},
  journal={Duke Mathematical Journal},
  volume={168},
  number={15},
  pages={2801--2871},
  year={2019},
  publisher={Duke University Press}
}

@article{STZ,
  title={Legendrian knots and constructible sheaves},
  author={Vivek Shende and David Treumann and Eric Zaslow},
  journal={Inventiones mathematicae},
  year={2017},
  volume={207},
  pages={1031-1133}
}

@article{Unbound,
  title={Resolutions of unbounded complexes},
  author={Spaltenstein, Nicolas},
  journal={Compositio Mathematica},
  volume={65},
  number={2},
  pages={121--154},
  year={1988}
}

@inproceedings{Tamarkin1,
  title={Microlocal condition for non-displaceability},
  author={Tamarkin, Dmitry},
  booktitle={Algebraic and Analytic Microlocal Analysis},
  pages={99--223},
  year={2013},
  organization={Springer}
}

@article{TrayGen,
  title={Generating function polynomials for {L}egendrian links},
  author={Traynor, Lisa},
  journal={Geometry \& Topology},
  volume={5},
  number={2},
  pages={719--760},
  year={2001},
  publisher={Mathematical Sciences Publishers}
}

@article{UsherZpersist,
  title={Persistent homology and {F}loer-{N}ovikov theory},
  author={Usher, Michael and Zhang, Jun},
  journal={Geometry \& Topology},
  volume={20},
  number={6},
  pages={3333--3430},
  year={2016},
  publisher={Mathematical Sciences Publishers}
}

@article{ViterboGen,
  title={Symplectic topology as the geometry of generating functions},
  author={Viterbo, Claude},
  journal={Mathematische Annalen},
  volume={292},
  number={1},
  pages={685--710},
  year={1992}
}

@book{Zhang,
    AUTHOR = {Zhang, Jun},
     TITLE = {Quantitative {T}amarkin theory},
    SERIES = {CRM Short Courses},
 PUBLISHER = {Springer, Cham},
      YEAR = {[2020] \copyright 2020},
     PAGES = {x+146},
      ISBN = {978-3-030-37887-5; 978-3-030-37888-2},
   MRCLASS = {53Dxx (18F20 18G80 35A25 55N31 55U35)},
       DOI = {10.1007/978-3-030-37888-2},
       URL = {https://doi.org/10.1007/978-3-030-37888-2},
}

@article{Zhou,
  title={Sheaf quantization of {L}egendrian isotopy},
  author={Zhou, Peng},
  journal={Compositio Mathematica},
  volume={159},
  number={2},
  pages={419--435},
  year={2023},
  publisher={London Mathematical Society}
}

@article {AsanoIkecomplete,
    AUTHOR = {Asano, Tomohiro and Ike, Yuichi},
     TITLE = {Completeness of derived interleaving distances and sheaf
              quantization of non-smooth objects},
   JOURNAL = {Math. Ann.},
  FJOURNAL = {Mathematische Annalen},
    VOLUME = {390},
      YEAR = {2024},
    NUMBER = {2},
     PAGES = {2991--3037},
      ISSN = {0025-5831},
       DOI = {10.1007/s00208-024-02815-x},
       URL = {https://doi.org/10.1007/s00208-024-02815-x},
}

@article{ViterboIntroSheaf,
  title={An introduction to symplectic topology through sheaf theory},
  author={Viterbo, Claude},
  journal={Preprint, available at \href{https://www.imo.universite-paris-saclay.fr/~claude.viterbo/Eilenberg/Eilenberg.pdf}{the author's personal website}},
  year={2011}
}

@article{KuoLiSpherical,
  title={Spherical adjunction and {S}erre functor from microlocalization},
  author={Kuo, Christopher and Li, Wenyuan},
  journal={arXiv preprint arXiv:2210.06643},
  year={2022}
}

@article{KuoLiCY,
  title={Relative {C}alabi-{Y}au structure on microlocalization},
  author={Kuo, Christopher and Li, Wenyuan},
  journal={arXiv preprint arXiv:2408.04085},
  year={2024}
}
\bibliographystyle{amsplain}

\end{document}